\documentclass[11pt]{article}

\usepackage[T1]{fontenc}
\usepackage[utf8]{inputenc}
\usepackage{lmodern}
\usepackage[a4paper,margin=1.12in]{geometry}
\usepackage{amsmath,amsthm,amssymb,mathtools,mathrsfs}
\usepackage{microtype}
\usepackage{xcolor}
\usepackage{enumerate}

\usepackage[colorlinks=true,linkcolor=blue!55!black,citecolor=blue!55!black,
  urlcolor=blue!55!black]{hyperref}

\numberwithin{equation}{section}

\newtheorem{theorem}{Theorem}[section]
\newtheorem{lemma}[theorem]{Lemma}
\newtheorem{proposition}[theorem]{Proposition}
\newtheorem{corollary}[theorem]{Corollary}
\theoremstyle{definition}
\newtheorem{definition}[theorem]{Definition}
\theoremstyle{remark}
\newtheorem{remark}[theorem]{Remark}
\newtheorem*{remark*}{Remark}

\newcommand{\R}{\mathbb R}
\newcommand{\cN}{\mathcal N}
\newcommand{\cG}{\mathcal G}
\newcommand{\cQ}{\mathcal Q}
\newcommand{\rad}{\mathrm{rad}}
\newcommand{\loc}{\mathrm{loc}}
\newcommand{\dd}{\,dx}
\newcommand{\norm}[1]{\lVert #1\rVert}
\newcommand{\abs}[1]{\lvert #1\rvert}

\title{
    Ground states of a logarithmic Schr\"odinger equation with a
power perturbation in $\R^N$: existence, uniqueness and nondegeneracy}
\author{
		Jinqi Yan \footnote{Email: yjq20011027@163.com} \quad 
		Bo You \footnote{Email: youb2013@xjtu.edu.cn}
		\\
		{\small School of Mathematics and Statistics, Xi'an Jiaotong University} \\
		\small Xi'an, 710049, P. R. China}

\date{}

\begin{document}

\maketitle

\begin{abstract}
The objective of this paper is to study the existence and uniqueness of positive ground state solutions of the following Schr\"odinger problem
\[
 -\Delta u=u\log u^2+\mu\abs{u}^{p-1}u\qquad\text{in }\R^N,
\]
where $N\ge2$, $1<p<2^*-1$ and $\mu>0.$ We prove that there exists a positive ground state solution for every $\mu>0$ and every such solution is smooth, radially symmetric with respect to some point, strictly decreasing in the radial variable. Moreover, as $\mu\downarrow0,$ the positive ground state solutions centered at the origin will converge, uniformly over the ground-state set, to
$U_0(x)=\exp(\frac{(N-\abs{x}^2)}{2})$ in the logarithmic Orlicz--Sobolev space and in $C^{2,\alpha}(\R^N)$ for every $\alpha\in(0,1),$ respectively. Furthermore, for sufficiently small $\mu,$ we also prove that the positive ground state solutions are unique up to translations  and the kernel space of the real scalar linearized operator can be spanned by the translation derivatives. The proof combines minimization on the Nehari manifold with uniform compactness and an exterior coefficient estimate that prevents normalized difference and kernel sequences from vanishing locally.
\end{abstract}

\medskip
\noindent\textbf{Keywords:} Ground state solution; uniqueness; nondegeneracy; Orlicz-Sobolev spaces.

\smallskip
\noindent\textbf{2020 Mathematics Subject Classification:}
35J61; 35J20, 35B20, 35B40, 35Q55.

\section{Introduction}
\label{sec:introduction}

In this paper, we mainly consider the Schr\"odinger equation with combined polynomial-logarithmic nonlinearity
\begin{equation}\label{eq:main}
-\Delta u=u\log u^2+\mu\abs{u}^{p-1}u\qquad\text{in }\R^N,
\end{equation}
where $N\ge2,$ $1<p<2^*-1$ and $\mu>0.$ Here $2^*=\frac{2N}{N-2},$ if $N\ge3;$ $2^*=\infty,
$ if $N=2.$ We would like to construct ground state solutions to problem \eqref{eq:main}
 for any $p\in(1,2^*-1)$ and to determine whether the uniqueness and nondegeneracy of the ground state solution to problem \eqref{eq:main} with $\mu=0$ will persist
under a small power perturbation.

As we know, the ground state solutions of nonlinear scalar field equations 
\begin{align}\label{2}
-\Delta u=f(u)\qquad\text{in }\R^N,
\end{align}
have been studied extensively and the existence of ground state solutions is closed related with the behavior of $f$ near zero and at infinity, and a positivity condition on its primitive function $\int_0^uf(s)\,ds.$ For example, in the case of dimension $N\geq 3,$ Berestycki and Lions \cite{BerestyckiLions1983} have proved the existence of a positive radial ground state solution for problem \eqref{2} with continuous odd nonlinearities, which satisfies
\[
-\infty < \liminf_{t\to 0^+}\frac{f(t)}{t}
\le \limsup_{t\to 0^+}\frac{f(t)}{t}
= -m < 0,
\
-\infty \le \limsup_{t\to +\infty}\frac{f(t)}{t^{2^*-1}}\le 0, \
\]
and
\[
\text{there exists a constant } \zeta>0, \text{ such that}
\int_0^\zeta f(s)\,ds>0
.
\]
Moreover, they also proved the uniqueness of such ground state solution under some additional conditions on the nonlinearity. The uniqueness of the ground state solution has been proved in \cite{Coffman1972} for the power model 
\begin{align}\label{3}
-\Delta u+u=u^p\qquad\text{in }\R^3
\end{align}
in the case of $p=3.$ Kwong \cite{Kwong1989} has also established the uniqueness of the positive radial decaying solution for problem \eqref{3} in the case of $p\in (1,5).$ For more general nonlinearities $f$, McLeod and Serrin
\cite{McLeodSerrin1987} and McLeod \cite{McLeod1993} have developed the uniqueness criteria based on comparison arguments for the radial initial-value problem.

The logarithmic term introduces a singular behavior at small amplitudes. Indeed, if $f_\mu(s)=s\log s^2+\mu s^p$ for $s>0$, then we have $\frac{f_\mu(s)}{s}\to-\infty$ as $s\downarrow0$. Thus, the finite negative linear limit assumed in the classical scalar field existence theorem is absent. The logarithmic Schr\"odinger model was introduced by
Bia\l ynicki-Birula and Mycielski \cite{BialynickiBirulaMycielski1976}, and Cazenave \cite{Cazenave1983} developed an appropriate energy-space framework and studied the stability of the Gaussian solutions. As for problem \eqref{eq:main},
the corresponding energy functional is given by
\begin{equation}\label{eq:action-intro}
 J_\mu(u)=\frac12\int_{\R^N}\bigl(\abs{\nabla u}^2+u^2\bigr)\dd
 -\frac12\int_{\R^N}u^2\log u^2\dd
 -\frac{\mu}{p+1}\int_{\R^N}\abs{u}^{p+1}\dd.
\end{equation}
Although the power term can be controlled by the subcritical Sobolev embedding, the negative part of $u^2\log u^2$ need not be integrable for a general $H^1(\R^N)$ function. Thus, we will work in the logarithmic Orlicz-Sobolev space
\begin{align*}
 W=\left\{u\in H^1(\R^N):\int_{\R^N}\abs{u}^2\abs{\log u^2}\dd<+\infty\right\}
\end{align*}
with the convention $0\log 0=0$ and the norm specified in
Definition~\ref{def:energy-space}, such that the functional $J_\mu$ is of class $C^1$ on $W.$

When $\mu=0,$ problem \eqref{eq:main} is reduced to the logarithmic Schr\"odinger model
\begin{equation*}
-\Delta U=U\log U^2\qquad\text{in }\R^N,
\end{equation*}
which admits the explicit Gaussian solution
\begin{equation*}
 U_0(x)=\exp\!\left(\frac{N-|x|^2}{2}\right).
\end{equation*}
d'Avenia, Montefusco, and Squassina
\cite{dAveniaMontefuscoSquassina2014} classified positive decaying solutions of the logarithmic equation in dimensions $N\ge3$ and established the nondegeneracy of the Gaussian. Troy \cite{Troy2016} proved uniqueness of the positive radial ground state in dimensions $1\le N\le9$ by an ODE argument. The two-dimensional classification considered here also follows from the three-dimensional result in \cite[Theorem~1.2]{dAveniaMontefuscoSquassina2014} by multiplying a solution by a one-dimensional Gaussian. Some simple calculations show that
\begin{equation}\label{eq:gaussian-energy}
 \norm{U_0}_2^2=e^N\pi^{\frac N 2},
 \qquad
 c_0=J_0(U_0)=\frac12e^N\pi^{\frac N 2}.
\end{equation}

Several related developments describe the role of logarithmic ground states in perturbation problems. Wang and Zhang \cite{WangZhang2019} proved that suitable rescalings of ground states for power-law scalar field equations converge to a logarithmic ground state as the power approaches the linear exponent. An and Fang \cite{AnFang2025}
used power-law approximations to consider the ground states of problem $-\Delta u=u(\log\abs{u})^\alpha$ in the case that the admissible logarithmic exponents $\alpha\ge1$. They proved the radial uniqueness and nondegeneracy as 
the admissible exponent $\alpha$ tends to $1$, and established the uniqueness up to translations by using the symmetry of positive decaying solutions. Zhang and Zhang \cite{ZhangZhang2022} established the existence, uniqueness and nondegeneracy of positive radial solutions for $-\Delta u+V(\abs{x})u=u\log u^2$ with prescribed radial potential under some suitable assumptions on $V.$ In the present
problem, the logarithmic term is fixed and the parameter $\mu$ controls an additional power nonlinearity. If we interpret this term as a potential $V=-\mu u^{p-1},$ which will
depends on the unknown solution, such that we cannot use the results in \cite{ZhangZhang2022} to consider problem \eqref{eq:main}.

Mixed logarithmic and power nonlinearities also arise in problems with prescribed mass. Shuai and Yang
\cite{ShuaiYang2025} studied the normalized solutions of problem
\[
 -\Delta u+\lambda u=u\log u^2+\mu\abs{u}^{p-1}u,
 \qquad \norm{u}_2=c,
\]
within a broader parameter family. They obtained the existence of normalized ground states and excited states in several exponent regimes, and investigated
the limit as $\mu\downarrow0.$ Here, we prescribe the coefficient $\mu$ in \eqref{eq:main} and minimize the functional among nontrivial solutions.
 These formulations are related by amplitude scaling: setting $v=e^{-\frac{\lambda}{2}}u$ removes the linear term but changes the power coefficient $\mu$ to $\mu e^{\frac{(p-1)\lambda}{2}}$ and also changes the mass.
Our results address uniqueness up to translations and the full linearized kernel for positive ground states at a prescribed small value of $\mu.$ These conclusions require control of the entire set of ground states for the fixed equation.

We establish existence and symmetry for every \(\mu>0\), uniform
 convergence of the entire centered ground-state set as \(\mu\downarrow0\),
  and uniqueness up to translations and full real scalar nondegeneracy for
   sufficiently small \(\mu\). Let \(c_\mu^\cN\) denote the Nehari level and
    \(\mathcal{G}_\mu^{\mathrm{rad}}\) the set of positive ground states radial
     about the origin; the precise definitions are given in Section~\ref{sec:variational}.

The logarithmic energy need not be finite on \(H^1(\mathbb{R}^N)\), 
and the derivative of \(s\log s^2\) is unbounded near zero. We minimize
 \(J_\mu\) on the Nehari constraint in \(W\). Schwarz rearrangement
  reduces the minimization to radial functions, while radial compactness 
  and rescaling recover the constraint at the weak limit. After verifying
   that the constraint is natural, we establish strict positivity by
    a maximum principle adapted to the logarithmic singularity, then
     apply elliptic bootstrapping. The equality case in the P\'olya--Szeg\H{o} 
     inequality gives symmetry of every positive ground state, and the radial
      estimate supplies the far-field control used in the perturbation analysis.

\begin{theorem}[Existence and qualitative properties]\label{thm:existence}
Assume that $N\ge2,$ $1<p<2^*-1$ and $\mu>0.$ 
Then the Nehari level $c_\mu^{\cN}>0$ is attained and
\begin{equation}\label{eq:level-equality-main}
 c_\mu^{\cN}
 =\min_{u\in\mathscr S_\mu}J_\mu(u)
 =\min_{u\in\mathscr S_\mu^{\rad}}J_\mu(u),
\end{equation}
where $\mathscr S_\mu$ and $\mathscr S_\mu^{\rad}$ are defined in Definition~\ref{def:solutions-ground-states}. 
In addition, we have
$\cG_\mu^{\rad}\neq\emptyset.$ Moreover, every positive ground state belongs to $W\cap C^\infty(\R^N)$ and has the form
$u(x)=U(\abs{x})$ after a translation, where
\begin{equation}\label{eq:strict-radial-decay-main}
 U'(r)<0,\qquad
 0<U(r)\le C_N r^{-\frac{N-1}{2}}\norm{u}_2^{\frac{1}{2}}\norm{\nabla u}_2^{\frac{1}{2}}
 \quad(r>0).
\end{equation}  In particular, we also have $U(r)\to0$ as $r\to\infty$.
\end{theorem}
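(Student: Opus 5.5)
We follow the route sketched before the statement: minimize $J_\mu$ on the Nehari manifold $\cN_\mu=\{u\in W\setminus\{0\}:\langle J_\mu'(u),u\rangle=0\}$ in $W$, and deduce the qualitative properties from the minimization. On $\cN_\mu$ we have
\[
J_\mu(u)=\frac12\norm{u}_2^2+\Bigl(\frac12-\frac1{p+1}\Bigr)\mu\norm{u}_{p+1}^{p+1},
\]
since $\langle J_\mu'(u),u\rangle=\norm{\nabla u}_2^2-\int u^2\log u^2-\mu\norm{u}_{p+1}^{p+1}$. Both terms are positive, so $J_\mu>0$ on $\cN_\mu$.

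\textbf{Step 1: $c_\mu^\cN>0$.} We show that $\norm{u}_2$ is bounded below on $\cN_\mu$. We use the logarithmic Sobolev inequality $\int u^2\log(u^2/\norm{u}_2^2)\le \frac{a^2}{\pi}\norm{\nabla u}_2^2-N(1+\log a)\norm{u}_2^2$ with a small parameter $a$, together with Gagliardo--Nirenberg for the power term. This gives $\norm{\nabla u}_2^2\lesssim (\log\norm{u}_2^2+C)\norm{u}_2^2+\mu\norm{u}_{p+1}^{p+1}$. If $\norm{u}_2\to0$ along a sequence on $\cN_\mu$, the logarithmic term becomes very negative and forces a contradiction. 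So $c_\mu^\cN>0$.

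\textbf{Step 2: attainment.} For $u\in W\setminus\{0\}$ the fibering map $t\mapsto J_\mu(tu)$ has a unique maximum point $t_u>0$, and $t_uu\in\cN_\mu$. Hence $c_\mu^\cN=\inf_{u\ne0}\max_t J_\mu(tu)$.

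Schwarz symmetrization preserves $\norm{u}_2$, $\norm{u}_{p+1}$ and $\int u^2\log u^2$, and does not increase $\norm{\nabla u}_2$. It therefore lowers the fibering maximum, so we may take a radial, nonincreasing minimizing sequence. This sequence is bounded in $L^2\cap L^{p+1}$ by the Nehari identity, and bounded in $H^1$ via the log-Sobolev estimate. Strauss radial compactness gives $u_n\to u$ in $L^{q}$ for $2<q<2^*$. The term $\int u^2\log^+u^2$ passes to the limit by compactness, and $\int u^2\log^-u^2$ is lower semicontinuous by Fatou. Thus $\langle J_\mu'(u),u\rangle\le0$, and $u\ne0$ by Step 1 combined with the $L^q$ convergence.

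Projecting $u$ onto $\cN_\mu$ with $t_u\le1$ and using lower semicontinuity of the expression for $J_\mu$ on $\cN_\mu$ shows that $t_uu$ attains $c_\mu^\cN$.

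The constraint is natural: $\cN_\mu$ is a $C^1$ manifold in $W$, with $\langle G'(u),u\rangle=-2\norm{u}_2^2-(p-1)\mu\norm{u}_{p+1}^{p+1}<0$. A Lagrange multiplier argument then shows the minimizer is a critical point. Every nontrivial solution lies on $\cN_\mu$, which gives \eqref{eq:level-equality-main}. The radial equality follows because the minimizer can be chosen radial.

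\textbf{Step 3: regularity, positivity, symmetry, decay.} Replacing $u$ by $\abs{u}$ keeps it a minimizer, so $u\ge0$. We then use local elliptic regularity: $u\log u^2\in L^q_{\loc}$ for all $q$, and bootstrapping gives $C^{1,\alpha}$. Positivity needs care, because $s\log s^2$ is not Lipschitz at $0$. On the set $\{u<e^{-1}\}$ we have $-\Delta u=u\log u^2+\mu u^p$, with $u\log u^2\le0$ but unbounded relative to $u$. The strong maximum principle fails in general for such non-Lipschitz absorption (dead cores are possible). I expect this to be the main obstacle.

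We would use a V\'azquez-type criterion. The absorption $\beta(s)=-s\log s^2$ satisfies $\int_0 (s\beta(s))^{-1/2}\,ds=\int_0 ds/(s\sqrt{2\abs{\log s}})=\infty$, so the strong maximum principle holds and $u>0$ everywhere. Once $u>0$, the nonlinearity is smooth along $u$, and bootstrapping gives $u\in C^\infty$.

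For symmetry: if $u$ is a positive ground state, then $u^*$ has $J_\mu(t u^*)\le J_\mu(tu)$ for all $t$, and hence equality holds in P\'olya--Szeg\H{o}. Brothers--Ziemer then yields $u=u^*(\cdot-x_0)$, provided $\abs{\{\nabla u^*=0,\,0<u^*<\max u\}}=0$. This condition follows from analyticity, or from the radial ODE $(r^{N-1}U')'=-r^{N-1}f_\mu(U)$: a critical point $U'(r_0)=0$ with $r_0>0$ would contradict monotonicity together with ODE uniqueness.

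Strict decrease comes from the ODE. We have $U'(0)=0$, and $r^{N-1}U'(r)=-\int_0^r s^{N-1}f_\mu(U)\,ds$. Since $U$ is nonincreasing, $U'(r_0)=0$ for some $r_0>0$ would force $f_\mu(U)$ to change sign appropriately. Combined with the unique continuation of the ODE, this gives $U\equiv$ const, which is impossible.

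Finally, the Strauss-type estimate $r^{N-1}U(r)^2\le C\norm{u}_2\norm{\nabla u}_2$ follows from $\frac{d}{dr}(r^{N-1}U^2)\ge 2r^{N-1}UU'$ and integration from $r$ to $\infty$. This gives \eqref{eq:strict-radial-decay-main}, and hence $U(r)\to0$.
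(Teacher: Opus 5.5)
Your overall route (Nehari minimization, Schwarz rearrangement, radial compactness, natural constraint, V\'azquez's maximum principle, the equality case of P\'olya--Szeg\H{o} via Brothers--Ziemer, and the Strauss bound) is the paper's. There is, however, a genuine error in Step 1.

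You claim that $\norm{u}_2$ is bounded below on $\cN_\mu$. This is false whenever $p>1+\frac4N$, which lies inside the admissible range $1<p<2^*-1$. Fix $\phi\in C_c^\infty(\R^N)\setminus\{0\}$ and let $u_\lambda=t_\lambda\phi(\lambda\,\cdot)$, with $t_\lambda>0$ chosen so that $u_\lambda\in\cN_\mu$. Dividing $K_\mu(u_\lambda)=0$ by $t_\lambda^2\lambda^{-N}$ gives
$\lambda^2\norm{\nabla\phi}_2^2=\int\phi^2\log\phi^2+\norm{\phi}_2^2\log t_\lambda^2+\mu t_\lambda^{p-1}\norm{\phi}_{p+1}^{p+1}$.
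Hence $t_\lambda\to\infty$ and $t_\lambda^{p-1}\le C\lambda^2$ for large $\lambda$. Consequently
$\norm{u_\lambda}_2^2=t_\lambda^2\lambda^{-N}\norm{\phi}_2^2\le C\lambda^{\frac{4}{p-1}-N}\to0$.
You can see the failure in your own inequality. Gagliardo--Nirenberg gives the power term the gradient exponent $\frac{N(p-1)}{2}>2$, so $\norm{u}_2\to0$ is compatible with $\norm{\nabla u}_2\to\infty$, and no contradiction arises. (Along this family $J_\mu(u_\lambda)\to\infty$ through the $L^{p+1}$ term, so $c_\mu^{\cN}>0$ is still true; only your mechanism fails.)

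The same defect propagates to Step 2, where you deduce $u\ne0$ ``by Step 1 combined with the $L^q$ convergence.'' Even a correct $L^2$ lower bound would not pass to a weak limit when you only have strong convergence for $2<q<2^*$.

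The repair is the paper's Lemma~3.2. Using $s^2\log s^2\le-\kappa s^2+C|s|^r$ and the Nehari identity, you get
$\min\{1,\kappa\}\norm{u}_{H^1}^2\le C\norm{u}_r^r+\mu\norm{u}_{p+1}^{p+1}$ on $\cN_\mu$.
By Sobolev, this gives the separation $\norm{u}_{H^1}\ge\rho$.

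\begin{itemize}
\item[(a)] Positivity of the level. If $J_\mu(u_n)\to0$ on $\cN_\mu$, then the identity $J_\mu=\frac12\norm{u}_2^2+\frac{\mu(p-1)}{2(p+1)}\norm{u}_{p+1}^{p+1}$ forces both $\norm{u_n}_2\to0$ and $\norm{u_n}_{p+1}\to0$. The logarithmic Sobolev inequality then yields $\norm{\nabla u_n}_2\to0$, contradicting the separation. The point is that you must use the $L^{p+1}$ part of the energy, not an $L^2$ lower bound alone.
\item[(b)] Nonvanishing of the limit. The same inequality gives $C\norm{u_n}_r^r+\mu\norm{u_n}_{p+1}^{p+1}\ge\min\{1,\kappa\}\rho^2$ along the minimizing sequence. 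This bound survives Strauss compactness, since $r,p+1\in(2,2^*)$, so $u\ne0$. Alternatively, use Cazenave's compactness in $W_{\rad}$, which includes $q=2$, together with the energy identity, as the paper does.
\end{itemize}

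The remaining steps are sound. In the strict-monotonicity step, the precise mechanism is not a sign change of $f_\mu(U)$. Rather, $U'\le0$ and $U'(r_0)=0$ force $U''(r_0)=0$, hence $f_\mu(U(r_0))=0$. Then the constant $U(r_0)$ has the same Cauchy data at $r_0>0$, and ODE uniqueness gives $U\equiv U(r_0)$, which contradicts $u\in L^2$.
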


We denote the common level in \eqref{eq:level-equality-main} by \(c_\mu\). 
The next result concerns arbitrary choices from \(\cG_\mu\)
,
before uniqueness is known
For \(\alpha\in(0,1)\), we use the global H\"older norm
\begin{equation}\label{eq:global-holder-norm}
 \norm{v}_{C^{2,\alpha}(\R^N)}
 :=\sum_{j=0}^2\norm{D^jv}_{L^\infty(\R^N)}
   +[D^2v]_{C^{0,\alpha}(\R^N)}.
\end{equation}

Since uniqueness is not yet known, convergence must hold uniformly over
 \(\mathcal{G}_\mu^{\mathrm{rad}}\), without selecting a canonical branch. 
 The Nehari and Pohozaev identities yield uniform bounds for small \(\mu\);
  radial compactness and the classification of positive decaying solutions
   of the limiting equation then identify every subsequential limit with
    \(U_0\). Energy identities give strong convergence in \(W\). Local
     elliptic estimates yield smooth convergence on bounded sets, while
      uniform far-field smallness and the H\"older continuity of
       \(s\log s^2\) near zero give global \(C^{2,\alpha}(\mathbb{R}^N)\) convergence.

\begin{theorem}[Uniform estimates and the Gaussian limit]\label{thm:limit}
Assume that \( N \ge 2\) and \(1 < p < 2^* - 1\).
Let $\mathscr S_\mu^{\rad}$ be defined as in Definition~\ref{def:solutions-ground-states}. 
Then the following statements hold.
Then the following conclusions are true for the sets $\cG_\mu^{\rad}:$
\begin{enumerate}[\rm{(}i\rm{)}]
  \item 

\begin{equation}\label{eq:set-uniform-W-main}
 c_\mu\longrightarrow c_0,
 \qquad
 \lim_{\mu\downarrow0}\ 
 \sup_{u_\mu\in\cG_\mu^{\rad}}\norm{u_\mu-U_0}_W=0.
\end{equation}
\item There exist two constants $\mu_1>0$ and $C=C(N,p,\mu_1)>0,$ such that
\begin{equation}\label{eq:uniform-W-Linfty-main}
 \sup_{0<\mu\le\mu_1}\sup_{u_\mu\in\cG_\mu^{\rad}}
 \bigl(\norm{u_\mu}_W+\norm{u_\mu}_\infty\bigr)\le C
\end{equation}
and for any $\abs{x}\ge1,$
\begin{equation}\label{eq:uniform-algebraic-main}
 \sup_{0<\mu\le\mu_1}\sup_{u_\mu\in\cG_\mu^{\rad}}
 \abs{u_\mu(x)}\le C\abs{x}^{-\frac{N-1}{2}}.
\end{equation}
\item For any given $R>0,$ any integer $k\ge0$ and any $\alpha\in(0,1),$ we have
\begin{equation}\label{eq:local-smooth-main}
 \lim_{\mu\downarrow0}\sup_{u_\mu\in\cG_\mu^{\rad}}
 \norm{u_\mu-U_0}_{C^{k,\alpha}(B_R)}=0.
\end{equation}
Moreover, for each $\alpha\in(0,1),$ we have
\begin{equation}\label{eq:global-smooth-main}
 \lim_{\mu\downarrow0}\sup_{u_\mu\in\cG_\mu^{\rad}}
 \norm{u_\mu-U_0}_{C^{2,\alpha}(\R^N)}=0
\end{equation}
and
\begin{equation}\label{eq:uniform-tail-main}
 \lim_{R\to+\infty}\sup_{0<\mu\le\mu_1}
 \sup_{u_\mu\in\cG_\mu^{\rad}}\sup_{\abs{y}\ge R+2}
 \norm{u_\mu}_{C^{2,\alpha}(B_1(y))}=0.
\end{equation}

\end{enumerate}
\end{theorem}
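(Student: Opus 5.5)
The proof is a compactness-and-identification argument carried out uniformly over the set $\cG_\mu^{\rad}$. We proceed by contradiction along arbitrary sequences $\mu_n\downarrow0$ and $u_n\in\cG_{\mu_n}^{\rad}$. This way no canonical branch has to be chosen.

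\textbf{Step 1: the energy comparison $\limsup c_\mu\le c_0$.} For each $\mu$, project $U_0$ onto the Nehari manifold of $J_\mu$. The relevant fibre is $t\mapsto J_\mu(tU_0)$. The Nehari scaling $t_\mu$ satisfies
\[
\log t_\mu^2\,\|U_0\|_2^2+\mu t_\mu^{p-1}\|U_0\|_{p+1}^{p+1}=0,
\]
because $\int(|\nabla U_0|^2-U_0^2\log U_0^2)=0$. Hence $t_\mu\to1$ and $J_\mu(t_\mu U_0)\to c_0$.

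\textbf{Step 2: uniform bounds.} On the Nehari manifold we have the identity
\[
J_\mu(u)=\tfrac12\|u\|_2^2+\mu\Bigl(\tfrac12-\tfrac1{p+1}\Bigr)\|u\|_{p+1}^{p+1},
\]
obtained by subtracting $\tfrac12\langle J_\mu'(u),u\rangle$. It gives a uniform $L^2$ bound, $\|u_n\|_2^2\le 2c_{\mu_n}\le C$. The Pohozaev identity for \eqref{eq:main} expresses $\|\nabla u\|_2^2$ through $\|u\|_2^2$ and $\mu\|u\|_{p+1}^{p+1}$. Combining it with Gagliardo--Nirenberg for the power term gives a uniform $H^1$ bound once $\mu\le\mu_1$. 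The logarithmic Sobolev inequality controls $\int u^2\log^+u^2$, and the Nehari identity then bounds $\int u^2\log^-u^2$. Together these give the uniform $W$ bound in \eqref{eq:uniform-W-Linfty-main}.

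The decay estimate \eqref{eq:uniform-algebraic-main} follows immediately from \eqref{eq:strict-radial-decay-main} in Theorem~\ref{thm:existence}. For the $L^\infty$ bound, I would use the radial decay outside $B_1$ and local Moser/$L^p$ elliptic estimates inside. The right-hand side satisfies $|f_\mu(u)|\le C(1+|u|^{p+\epsilon})$, which is subcritical, so bootstrapping gives a bound depending only on the $H^1$ bound.

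\textbf{Step 3: passage to the limit and identification.} By Strauss radial compactness, $u_n\to u$ strongly in $L^q$ for $2<q<2^*$, and weakly in $H^1$. Local elliptic estimates give $C^{2,\alpha}_{\loc}$ convergence, so $u$ is a nonnegative radial solution of the $\mu=0$ equation. Nontriviality comes from the lower bound $c_\mu\ge c>0$, uniform in $\mu$. Specifically, I would use the Nehari identity together with the logarithmic Sobolev inequality to rule out $\|u_n\|_{q}\to0$. Then $u>0$ by the strong maximum principle adapted to the logarithmic singularity, and $u$ decays. The classification of positive decaying solutions in the papers cited earlier then forces $u=U_0$, since $u$ is radial about the origin.

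\textbf{Step 4: strong convergence and the level limit.} I expect this to be the main obstacle. Weak lower semicontinuity of $\|u\|_2^2$, together with Step 1, gives
\[
c_0\le\tfrac12\|U_0\|_2^2\le\liminf\tfrac12\|u_n\|_2^2\le\liminf c_{\mu_n}\le\limsup c_{\mu_n}\le c_0 .
\]
Hence $c_\mu\to c_0$ and $\|u_n\|_2\to\|U_0\|_2$. It follows that $u_n\to U_0$ in $L^2$, and then in $L^q$. Passing to the limit in the Nehari identity gives convergence of $\|\nabla u_n\|_2$. The hard part is convergence of $\int u_n^2|\log u_n^2|$ in $W$. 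Here I would use that $\int u_n^2\log^- u_n^2$ converges, which follows from the energy identities, and then a Brezis--Lieb/Vitali argument for the Orlicz modular. This yields \eqref{eq:set-uniform-W-main}.

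\textbf{Step 5: H\"older convergence.} Local convergence \eqref{eq:local-smooth-main} in $C^{k,\alpha}$ follows by bootstrapping on $B_R$, where the $u_n$ are uniformly bounded below and $s\log s^2$ is smooth there.

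For the tail estimate \eqref{eq:uniform-tail-main}, fix $|y|\ge R+2$. On $B_2(y)$ the uniform decay gives $u_n\le\delta_R\to0$. The nonlinearity $s\log s^2$ is $C^{0,\beta}$ near $0$ for every $\beta<1$, with $|s\log s^2|\le C_\beta s^\beta$. Interior $W^{2,q}$ and Schauder estimates then give
\[
\|u_n\|_{C^{2,\alpha}(B_1(y))}\le C\bigl(\delta_R+\delta_R^{\beta'}\bigr),
\]
uniformly in $n$. The same bound holds for $U_0$.

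Combining the tail bound (applied to both $u_n$ and $U_0$) with local convergence on $B_{R+3}$ gives the global statement \eqref{eq:global-smooth-main}. The H\"older seminorm across the two regions is handled by splitting into $|x-z|<1$, where the local norms apply, and $|x-z|\ge1$, where the seminorm is bounded by $2\|D^2(u_n-U_0)\|_\infty$.
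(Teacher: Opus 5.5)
Your proposal is correct and shares the paper's overall architecture. The common elements are: Nehari projection of $U_0$, Pohozaev-based bounds, radial compactness, classification of the limit, modular convergence in $V$, Schauder estimates with H\"older interpolation in the far field, and a near/far splitting for the global seminorm. Several sub-steps, however, take a different route.

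The main difference is how you obtain strong $L^2$ convergence and $c_\mu\to c_0$. The paper uses the compact embedding of $W_{\rad}$ into $L^2$ (Proposition~\ref{prop:radial-compactness}), reads off $c_{\mu_n}\to\frac12\|u\|_2^2=J_0(u)$, and only afterwards identifies $u=U_0$. There, the bound $\limsup c_\mu\le c_0$ serves only to give $c_\mu\le C$. You use only Strauss compactness in $L^q$, $2<q<2^*$, and identify the limit first. You then close the chain $c_0=\frac12\|U_0\|_2^2\le\liminf\frac12\|u_n\|_2^2\le\liminf c_{\mu_n}\le\limsup c_{\mu_n}\le c_0$, so weak convergence plus convergence of norms gives strong $L^2$ convergence.

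This avoids the Orlicz part of the compactness. The cost is that nontriviality of the weak limit must come from an $L^q$ lower bound rather than an $L^2$ one. That bound is available, for example from the uniform estimate $\|\nabla u_n\|_2^2\ge d$ of Lemma~\ref{lem:uniform-nonvanishing} combined with $\|\nabla u_n\|_2^2\le C\|u_n\|_r^r+\mu_n\|u_n\|_{p+1}^{p+1}$. Similarly, your $L^\infty$ bound (Strauss decay outside $B_1$, local subcritical bootstrap inside) exploits radiality in place of the global Moser iteration of Lemma~\ref{lem:moser-iteration}. Both routes work.

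Two small corrections are needed.

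First, Gagliardo--Nirenberg does not close the $H^1$ bound when $p>1+\frac4N$: it only gives $X\le C+C\mu X^\gamma$ with $\gamma=\frac{N(p-1)}{4}>1$. It is also unnecessary. Eliminating the logarithmic term between the Nehari and Pohozaev identities gives exactly $\|\nabla u\|_2^2=Nc_\mu$. The identity $c_\mu=\frac12\|u\|_2^2+\frac{\mu(p-1)}{2(p+1)}\|u\|_{p+1}^{p+1}$ then bounds both terms.

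Second, in Step~4 gradient convergence comes most directly from $\|\nabla u_n\|_2^2=Nc_{\mu_n}\to Nc_0=\|\nabla U_0\|_2^2$. The Nehari identity alone involves $\int u_n^2\log u_n^2$, whose convergence is what you are trying to prove. The alternative is to apply weak lower semicontinuity to both terms on the left of
\[
\|\nabla u_n\|_2^2+\int A(|u_n|)=\int B(u_n)+o(1),
\]
whose right-hand side tends to $\|\nabla U_0\|_2^2+\int A(U_0)$. With either fix, the negative-part convergence and the modular argument go through as you describe.
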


\begin{remark}
The uniform convergence statements
\eqref{eq:set-uniform-W-main}, \eqref{eq:local-smooth-main} and
\eqref{eq:global-smooth-main} are equivalent to convergence along every
sequence $\mu_n\downarrow0$ and every choice
$u_n\in\cG_{\mu_n}^{\rad}$ in the corresponding norms together with
$c_{\mu_n}\to c_0$. The bounds over $0<\mu\le\mu_1$ in
\eqref{eq:uniform-W-Linfty-main}, \eqref{eq:uniform-algebraic-main}, and
\eqref{eq:uniform-tail-main} are separate assertions.
\end{remark}

For small \(\mu\), we prove uniqueness and nondegeneracy by a perturbative argument
 based on the Gaussian linearization
\[
L_0=-\Delta-\log U_0^2-2=-\Delta+|x|^2-N-2,
\qquad
\ker L_0=\operatorname{span}\{\partial_{x_1}U_0,\ldots,\partial_{x_N}U_0\},
\]
whose radial kernel is trivial.

\begin{theorem}[Uniqueness up to translations and full nondegeneracy]
\label{thm:nondegeneracy}\label{thm:uniqueness-translations}
Assume that $N\ge2$ and $1<p<2^*-1.$ Let
\[
 L_\mu=-\Delta-\log u_\mu^2-2-\mu p\,u_\mu^{p-1}
\]
be the real scalar linearized operator on $L^2(\R^N)$ with the self-adjoint realization
 specified in Definition~\ref{def:linearized-form}. 
Then there exists a constant $\mu_0=\mu_0(N,p)>0,$ such that for every $0<\mu<\mu_0$, there is a unique
$u_\mu\in\cG_\mu^{\rad}$ and
\begin{equation}\label{eq:ground-state-orbit}
 \cG_\mu^+=\{u_\mu(\,\cdot-y):y\in\R^N\}.
\end{equation}
Moreover, we have
\begin{equation*}
 \ker L_\mu
 =\operatorname{span}\{\partial_{x_1}u_\mu,\ldots,
       \partial_{x_N}u_\mu\},
 \qquad
 \ker(L_\mu|_{L^2_{\rad}})=\{0\}.
\end{equation*}
\end{theorem}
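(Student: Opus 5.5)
The argument is by contradiction along sequences $\mu_n\downarrow0$, using Theorem~\ref{thm:limit} as the compactness input and the nondegeneracy of $L_0$ as the rigidity input. The order is: first the radial kernel, then the full kernel, then uniqueness. By Theorem~\ref{thm:existence}, every positive ground state is a translate of an element of $\cG_\mu^{\rad}$. So \eqref{eq:ground-state-orbit} follows once $\cG_\mu^{\rad}$ is a singleton.

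\emph{Kernel.} Suppose there are $\mu_n\downarrow0$, $u_n\in\cG_{\mu_n}^{\rad}$, and $\phi_n\in\ker L_{\mu_n}$ with $\norm{\phi_n}_2=1$. In the radial case take $\phi_n$ radial; in the general case take $\phi_n$ $L^2$-orthogonal to $\operatorname{span}\{\partial_{x_i}u_n\}$.
\begin{enumerate}[(1)]
\item Testing the equation against $\phi_n$ gives
\[
\int\abs{\nabla\phi_n}^2+\int(-\log u_n^2)\phi_n^2=\int(2+\mu_np u_n^{p-1})\phi_n^2\le C .
\]
The uniform $L^\infty$ bound \eqref{eq:uniform-W-Linfty-main} makes $-\log u_n^2$ bounded below. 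Hence $\phi_n$ is bounded in $H^1$, with a bounded weighted term.
\item Pass to a weak limit $\phi_n\rightharpoonup\phi$. By \eqref{eq:local-smooth-main}, $\log u_n^2\to\log U_0^2$ locally uniformly, so $L_0\phi=0$.
\item The orthogonality or radial symmetry persists in the limit, because $\partial_{x_i}u_n\to\partial_{x_i}U_0$ in $L^2$. This uses \eqref{eq:global-smooth-main} together with the uniform tail bounds. Since $\ker L_0=\operatorname{span}\{\partial_{x_i}U_0\}$, it follows that $\phi=0$.
\item The contradiction must come from showing that $\phi_n$ cannot vanish locally, i.e. it cannot lose all its mass at infinity. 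This is the main obstacle. The potential $-\log u_n^2-2-\mu_np u_n^{p-1}$ tends to $+\infty$ in the far field, uniformly in $n$, and this is the ``exterior coefficient estimate'' mentioned in the abstract. Indeed, \eqref{eq:uniform-algebraic-main} gives $u_n\le C\abs{x}^{-(N-1)/2}$, so
\[
-\log u_n^2\ge (N-1)\log\abs{x}-C\to\infty \quad\text{uniformly in } n .
\]
Therefore there is an $R$ with $-\log u_n^2-2-\mu_np u_n^{p-1}\ge1$ on $\abs{x}\ge R$ for all $n$.
\item Pair $L_{\mu_n}\phi_n=0$ with $\chi^2\phi_n$, where $\chi$ is a cutoff supported in $\{\abs{x}\ge R\}$. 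This yields
\[
\int_{\abs{x}\ge 2R}\phi_n^2\le C\int_{R\le\abs{x}\le2R}\phi_n^2 ,
\]
and the right-hand side tends to $0$ by local compactness (Rellich). Hence $\phi_n\to0$ in $L^2$, contradicting $\norm{\phi_n}_2=1$.
\end{enumerate}
The delicate point is that the potential is only $\log$-growing and the bound must be uniform in $n$. Ensuring this is why the uniform algebraic decay in \eqref{eq:uniform-algebraic-main} is needed, rather than merely pointwise decay for each $n$.

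\emph{Uniqueness.} Suppose $u_n\ne v_n$ both lie in $\cG_{\mu_n}^{\rad}$. Set
\[
w_n=\frac{u_n-v_n}{\norm{u_n-v_n}_2}.
\]
Then $w_n$ is radial and solves
\[
-\Delta w_n=\Bigl(\int_0^1\bigl(\log\theta_t^2+2\bigr)\,dt\Bigr)w_n+\mu_n p\,c_n w_n,\qquad \theta_t=tu_n+(1-t)v_n>0 ,
\]
where $c_n$ is the corresponding mean-value coefficient of $s^{p-1}$, bounded by the uniform $L^\infty$ bound.
\begin{enumerate}[(1)]
\item The coefficient $\int_0^1\log\theta_t^2\,dt$ converges locally uniformly to $\log U_0^2$ by \eqref{eq:local-smooth-main}. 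Its singularity at $\theta_t=0$ is harmless because $\theta_t>0$ and the logarithm is integrable in $t$.
\item In the exterior, $\theta_t\le\max(u_n,v_n)\le C\abs{x}^{-(N-1)/2}$. So the same exterior coefficient estimate applies and gives $\int_0^1(-\log\theta_t^2)\,dt\ge (N-1)\log\abs{x}-C$.
\item Repeating the argument above, $w_n$ converges to a radial, $L^2$-normalized element of $\ker L_0$, which is $\{0\}$ on radial functions, and the cutoff estimate rules out vanishing. This is a contradiction.
\end{enumerate}

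Hence, for $\mu<\mu_0$, the set $\cG_\mu^{\rad}$ is a singleton, and the kernel statements follow from the kernel step. One can check $\partial_{x_i}u_\mu\in\ker L_\mu$ by differentiating the equation, with the derivatives lying in the form domain. These $N$ derivatives are linearly independent since $u_\mu$ is radial and nonconstant.
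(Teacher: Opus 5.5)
Your proposal is correct and has the same architecture as the paper's proof. Both argue by contradiction along $\mu_n\downarrow0$, using $L^2$-normalized radial differences and normalized kernel elements orthogonal to the translation modes. Both use a uniform exterior coefficient bound derived from \eqref{eq:uniform-algebraic-main}, pass to a limit solving $L_0\phi=0$, and conclude with the Gaussian kernel characterization. Doing the kernel before uniqueness is harmless, since your kernel argument works at any $u_n\in\cG_{\mu_n}^{\rad}$.

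The real difference is how the exterior bound is turned into nonvanishing. The paper's Lemma~\ref{lem:no-loss} establishes the global identity $\norm{\nabla w_n}_2^2=\int a_nw_n^2$. It combines $a_n\le C_0$ on $\R^N$ with $a_n\le-A$ off $B_R$ to get the explicit local mass bound \eqref{eq:local-mass-lower}, so the weak limit is nonzero and contradicts rigidity directly. You argue the other way round. Rigidity forces $\phi=0$, and then the localization identity $\int\abs{\nabla(\chi\phi_n)}^2+\int V_n\chi^2\phi_n^2=\int\abs{\nabla\chi}^2\phi_n^2$, with $V_n\ge1$ on $\operatorname{supp}\chi$, shows that no mass can escape to infinity.

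Your device uses only the exterior sign of the potential; the upper bound $C_0$ enters only through the $H^1$ estimate. Applied with $\chi$ supported in $\{\abs{x}\ge R'\}$ for arbitrary $R'\ge R$, it even gives the uniform tightness $\int_{\abs{x}\ge 2R'}\phi_n^2\le C(R')^{-2}$. Hence the normalized sequences converge strongly in $L^2$, which is slightly more than the paper extracts. The paper's route, by contrast, packages nonvanishing into a single global energy identity with the explicit bound $A/(A+C_0)$ on a fixed ball.

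One small correction: the $L^2$ convergence $\partial_{x_i}u_n\to\partial_{x_i}U_0$ comes from the $W$-convergence in \eqref{eq:set-uniform-W-main}. It does not follow from \eqref{eq:global-smooth-main} plus the sup-norm tail bounds, which do not control $L^2$ tails on $\R^N$. Given your tightness, weak $L^2$ convergence of $\partial_{x_i}u_n$ would suffice anyway.
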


In dimensions \(N>2\), a hypothesis of the Serrin--Tang criterion \cite{SerrinTang2000}
 is that \(\frac{f'(s)}{f(s)}\) be nonincreasing where \(f>0\). For \(f_\mu\), let \(q=p-1\)
  and let \(b_\mu\) be the unique positive zero of \(\log s^2+\mu s^q\). For \(s>b_\mu\),
\[
\frac{d}{ds}\left(\frac{sf_\mu'(s)}{f_\mu(s)}\right)
=\frac{2\mu q s^q(q\log s-2)-4}
{s\bigl(\log s^2+\mu s^q\bigr)^2},
\]
which is positive for all sufficiently large \(s\), for every \(\mu>0\). 
Thus this criterion does not apply directly, even for arbitrarily small \(\mu\).

Theorem~\ref{thm:limit} gives local convergence of the coefficients
 in the normalized-difference and linearized equations. These coefficients
  may be unbounded below, and local convergence alone cannot prevent \(L^2\)-normalized
   differences or kernel functions from vanishing on every bounded set. We obtain
    a uniform global upper bound for the coefficients and use uniform far-field
     smallness to make them uniformly negative outside a fixed ball. A cutoff
      argument justifies the global energy identity, which then forces a fixed
       positive amount of \(L^2\)-mass to remain in that ball. Compactness 
       yields a nonzero limit in \(\ker L_0\).

For differences of radial ground states, this contradicts the trivial radial kernel;
 combined with Theorem~\ref{thm:existence}, the resulting
  radial uniqueness gives uniqueness up to translations. For kernel
   functions chosen orthogonal to the translation modes, strong \(H^1\) 
   convergence of the ground states preserves this orthogonality in the limit,
    excluding additional kernel modes. The global H\"older convergence in
     Theorem~\ref{thm:limit} is an additional regularity conclusion; the 
     perturbative argument uses energy-space convergence, local convergence, 
     and uniform far-field smallness, without requiring precise Gaussian decay of
      the perturbed states.

The paper is organized as follows. Section~\ref{sec:Preliminaries} introduces 
the functional setting and preliminary estimates. Section~\ref{sec:existence}
 proves existence and the qualitative properties of ground states. Section~\ref{sec:limit}
  establishes uniform bounds and convergence to the Gaussian. 
  Section~\ref{sec:uniqueness-nondegeneracy} proves uniqueness and nondegeneracy.
   The appendix contains the iteration and H\"older estimates used in the global
    regularity analysis.

Throughout the paper, functions are real-valued. We write $\norm{\cdot}_q$ for the norm in $L^q(\R^N)$, $B_R(y)$ for the open ball of radius $R$ centered at $y$, and $B_R=B_R(0)$.
  The symbols $t_+=\max\{t,0\}$ and $t_-=\max\{-t,0\}$ denote the positive and negative parts. Constants denoted by $C$ may change from line to line; their dependence is specified when uniformity is needed.

\section{Preliminaries}\label{sec:Preliminaries}

\subsection{Functional setting and variational definitions}\label{sec:variational}
We first introduce the logarithmic energy space and recall some estimates used in the sequel.

As in \cite{Cazenave1983}, for any $s\ge0,$ define
\begin{equation*}
 A(s)=\begin{cases}
 -s^2\log s^2,&0\le s\le e^{-3},\\
 3s^2+4e^{-3}s-e^{-6},&s\ge e^{-3},
 \end{cases}
 \qquad
 B(s)=s^2\log s^2+A(s),
\end{equation*}
We adopt the convention $0\log 0=0$ and extend both functions evenly to $\R$.
The two expressions for $A$, as well as their first derivatives, agree at
$s=e^{-3}$. Moreover, a direct computation shows that $A$ is convex on
$[0,\infty)$. Hence, after the even extension, $A$ is an even convex
$C^1$ Young function.

\begin{definition}[Logarithmic energy space]\label{def:energy-space}
Following \cite{Cazenave1983}, let
\[
 V=L^A(\R^N)=\{u\in L^1_{\loc}(\R^N):A(\abs u)\in L^1(\R^N)\},
\]
equipped with the Luxemburg norm
\[
 \norm{u}_V=\inf\left\{k>0:
 \int_{\R^N}A\!\left(\frac{\abs u}{k}\right)\dd\le1\right\}.
\]
The logarithmic Orlicz-Sobolev space and its radial subspace are denoted by
\begin{align*}
 W &= H^1(\R^N)\cap V,
 \qquad
 \norm{u}_W=\norm{u}_{H^1}+\norm{u}_V,\\
 W_{\rad}
 &=\{u\in W : u\circ Q=u \text{ for every } Q\in O(N)\}.
\end{align*}
\end{definition}

 Indeed, for any $r\in(2,2^*)$, we have 
\begin{equation}\label{eq:positive-log-growth}
 (s^2\log s^2)^+\le C_r|s|^r, 
\end{equation}
     together with the Sobolev embedding, we can obtain the following conclusion,
For every $u\in H^1(\mathbb R^N)$, the positive part of $u^2\log u^2$
is integrable.
Furthermore, the definition of $A$ then gives the equivalence
\begin{equation}\label{eq:W-log-equivalence}
 u\in W
 \quad\Longleftrightarrow\quad
 u\in H^1(\R^N)\ \text{and}\ 
 \int_{\R^N}\abs{u}^2\abs{\log u^2}\dd<+\infty.
\end{equation}
In particular, both the positive and negative parts of the logarithmic integral are finite.

Accordingly, the energy associated with~\eqref{eq:action-intro} is well defined on $W$ and can be written as
\begin{equation*}
 J_\mu(u)=\frac12\int\bigl(\abs{\nabla u}^2+u^2\bigr)
 +\frac12\int A(\abs{u})-\frac12\int B(\abs{u})
 -\frac{\mu}{p+1}\int\abs{u}^{p+1}.
\end{equation*}
The Orlicz estimates below give reflexivity and a criterion for norm convergence from modular convergence. These will be used in the
variational construction and in the Gaussian limit.
 
\begin{proposition}[Orlicz estimates]\textnormal{(see \cite{RaoRen1991} and \cite[Lemma~2.1]{Cazenave1983})}\label{prop:orlicz-facts}
Write $\rho_A(v)=\int A(\abs v)$ and let $\widetilde A$ be the
complementary Young function of $A$. 
\begin{enumerate}[\rm{(}i\rm{)}]
  \item 
For $s>0$, we have
\begin{equation}\label{eq:A-growth-indices}
 1\le\frac{sA'(s)}{A(s)}\le2.
\end{equation}
\item 
Both $A$ and $\widetilde A$ satisfy a global $\Delta_2$ condition,
then $V$ and $W$ are reflexive, and
\begin{equation}\label{eq:modular-norm}
 \min\{\norm v_V,\norm v_V^2\}
 \le\rho_A(v)\le
 \max\{\norm v_V,\norm v_V^2\}.
\end{equation}
\item 
If $v_n\to v$ almost everywhere and
$\rho_A(v_n)\to\rho_A(v)<\infty$, then $v_n\to v$ in $V$.
\item 
Moreover, $C_c^\infty(\R^N)$ is dense in $W$.
\end{enumerate}
\end{proposition}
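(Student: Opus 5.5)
The plan is to verify the four items in order, using only the explicit form of $A$ and standard Orlicz space theory as in \cite{RaoRen1991}.

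For (i) we compute $sA'(s)/A(s)$ on each branch. On $(0,e^{-3}]$ we have $A(s)=-2s^2\log s$ and $A'(s)=-4s\log s-2s$. Hence
\[
\frac{sA'(s)}{A(s)}=2+\frac{1}{\log s}.
\]
Since $\log s\le -3$, this quantity lies in $[5/3,2)$. On $[e^{-3},\infty)$ we have
\[
\frac{sA'(s)}{A(s)}=\frac{6s^2+4e^{-3}s}{3s^2+4e^{-3}s-e^{-6}}.
\]
The lower bound $\ge1$ reduces to $3s^2+e^{-6}\ge0$. The upper bound $\le2$ reduces to $4e^{-3}s\ge 2e^{-6}$, which holds for $s\ge e^{-3}$. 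One also checks $A>0$ there.

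For (ii) we integrate (i). The relation $\frac{d}{ds}\log A\in[1/s,2/s]$ gives
\[
\lambda^2A(s)\le A(\lambda s)\le\lambda A(s)\ \ (\lambda\le1),\qquad \lambda A(s)\le A(\lambda s)\le\lambda^2A(s)\ \ (\lambda\ge1).
\]
This yields the global $\Delta_2$ condition $A(2s)\le4A(s)$. The lower index $1$ is not strictly above $1$, so the $\Delta_2$ property of $\widetilde A$ needs a separate argument. Here we use that $A'$ is continuous, increasing and onto $[0,\infty)$. We also need $sA'(s)/A(s)\ge1+\delta$ near infinity, which holds since the ratio tends to $2$, and near $0$, which holds since the ratio is at least $5/3$. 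The ratio is continuous and strictly greater than $1$ on compacts, so it is bounded below by some $1+\delta>1$ globally. This is the standard criterion for $\widetilde A\in\Delta_2$. Reflexivity of $V$ then follows from the Orlicz theory. $W$ is isometric to a closed subspace of $H^1\times V$ via the diagonal map $u\mapsto(u,u)$: a sequence converging in both spaces has a.e.\ equal limits. Hence $W$ is reflexive. The modular–norm inequality \eqref{eq:modular-norm} follows from the scaling bounds applied with $\lambda=1/\norm v_V$ together with $\rho_A(v/\norm v_V)=1$. That identity holds by the $\Delta_2$ condition and continuity of the modular.

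For (iii) we apply a Brezis–Lieb type argument. Convexity and $\Delta_2$ give $A(|v_n-v|)\le C(A(|v_n|)+A(|v|))$. The sequence $g_n=C(A(|v_n|)+A(|v|))-A(|v_n-v|)\ge0$ converges a.e.\ to $2CA(|v|)$. By Fatou and $\rho_A(v_n)\to\rho_A(v)$ we get $\limsup\rho_A(v_n-v)\le0$. Then \eqref{eq:modular-norm} gives $\norm{v_n-v}_V\to0$.

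For (iv) we truncate and mollify. Truncation uses $\chi(x/R)u$: the $V$-modular of the tail tends to $0$ by dominated convergence, since $A(\chi|u|)\le A(|u|)$. Mollification converges in $H^1$ and a.e.\ along a subsequence. The modulars converge because the functions are compactly supported and uniformly bounded in $L^2$, so by Vitali we can control $\int A(|\rho_\varepsilon*u|)$. We use $A(s)\le C(s^{2-\epsilon}+s^2)$, so that $A$ is dominated uniformly integrably on a fixed compact set. Then (iii) gives convergence in $V$. We expect the main obstacle to be the $\Delta_2$ property of $\widetilde A$, since the index bound in (i) alone is borderline. The fix is the strict lower bound $\ge5/3$ near $0$ and the ratio tending to $2$ at infinity.
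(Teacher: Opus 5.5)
Your proposal is correct. The paper gives no argument of its own here: it only cites Rao--Ren and Cazenave's Lemma~2.1. Your write-up therefore verifies, for this particular $A$, the hypotheses of the general Orlicz-space facts those references contain. You still rely on two such facts: the reflexivity criterion (both $A$ and $\widetilde A$ in $\Delta_2$), and the equivalence $\widetilde A\in\Delta_2\Leftrightarrow A\in\nabla_2$. But you check everything problem-specific: the branchwise index computation, $A(2s)\le 4A(s)$ by integrating (i), the Fatou/Brezis--Lieb argument for (iii), and truncation plus mollification for (iv).

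What your version buys over the citation is that it exposes a gap the statement glosses over. The lower bound $1$ in \eqref{eq:A-growth-indices} is too weak to yield $\widetilde A\in\Delta_2$; what is needed is $\inf_{s>0}sA'(s)/A(s)>1$, which you correctly extract. You can make this explicit instead of appealing to continuity and compactness:
\begin{itemize}
\item on $(0,e^{-3}]$ the ratio is at least $\frac53$;
\item on $[e^{-3},\infty)$, the inequality $sA'(s)/A(s)\ge\frac32$ reduces to $3s^2-4e^{-3}s+3e^{-6}\ge0$, whose discriminant is negative.
\end{itemize}
Hence $A(4s)\ge 4^{3/2}A(s)=8A(s)$ for all $s\ge0$, which is the $\nabla_2$ condition with $\ell=4$.

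Two small points should be tightened.
\begin{itemize}
\item In (iv), uniform $L^2$ boundedness alone does not make $|\rho_\varepsilon*u|^2$ uniformly integrable; it is the $L^2$ convergence of the mollifications that does. You can bypass Vitali and (iii) entirely: for $g$ supported in a fixed ball $B$, your bound $A(s)\le C(s^{2-\delta}+s^2)$ and H\"older give $\rho_A(g)\le C\bigl(|B|^{\delta/2}\|g\|_2^{2-\delta}+\|g\|_2^2\bigr)$.
\item For reflexivity of $W$, closedness of the diagonal in $H^1\times V$ uses that $V$-convergence implies convergence in measure. This follows from $|\{|f|>\varepsilon\}|\le\rho_A(f)/A(\varepsilon)$ together with \eqref{eq:modular-norm}, and it should be stated.
\end{itemize}
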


\begin{proposition}[Nonlinear integral calculus]
  \label{prop:nonlinear-calculus}\leavevmode
  \begin{enumerate}[\rm{(}i\rm{)}]
 \item
For every fixed $r>2$ and all real $s,t$,
\begin{equation}\label{eq:B-mean-value}
 \begin{split}
 |B'(s)|&\le C_r|s|^{r-1},\\
 |B(s)-B(t)|&\le C_r(|s|^{r-1}+|t|^{r-1})|s-t|.
 \end{split}
\end{equation}
\item 
In particular,
\[
    v_n\to v \text{ in }L^r(\mathbb R^N)
    \quad\Longrightarrow\quad
    B(v_n)\to B(v) \text{ in }L^1(\mathbb R^N).
\]

\item 
The functionals $J_\mu$ and $K_\mu$ are of class $C^1$ on $W$ for every
$\mu\ge0$, and

\begin{equation}\label{eq:action-derivative}
 J_\mu'(u)[\varphi]
 =\int_{\R^N}\nabla u\cdot\nabla\varphi
 -\int_{\R^N} u\log u^2\,\varphi
 -\mu\int_{\R^N}\abs{u}^{p-1}u\,\varphi.
\end{equation}
\item 
Consequently, every distributional solution in $W$ is a weak solution in the sense of
Definition~\ref{def:solutions-ground-states}.
\end{enumerate}
\end{proposition}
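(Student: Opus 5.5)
We prove the four items of Proposition~\ref{prop:nonlinear-calculus} in order, each relying on the previous one.

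\emph{Item (i).} First we compute $B$ explicitly. For $0\le s\le e^{-3}$ we have $B\equiv0$. For $s\ge e^{-3}$,
\[
B(s)=s^2\log s^2+3s^2+4e^{-3}s-e^{-6},\qquad B'(s)=2s\log s^2+8s+4e^{-3}.
\]
At $s=e^{-3}$ this gives $B'(e^{-3})=-12e^{-3}+8e^{-3}+4e^{-3}=0$, so $B\in C^1$ and $B'$ vanishes on $[0,e^{-3}]$. On $[e^{-3},\infty)$, bound $|B'(s)|$ as follows. For $s\ge1$, use $\log s^2\le C_r s^{r-2}$. For $e^{-3}\le s\le 1$, $B'$ is bounded and $s^{r-1}\ge e^{-3(r-1)}>0$. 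Together these give $|B'(s)|\le C_r|s|^{r-1}$, and the even extension gives the bound for all real $s$. The difference estimate then follows from the mean value theorem: $|B(s)-B(t)|\le \sup_{\theta}|B'(\theta)||s-t|$ with $|\theta|\le\max(|s|,|t|)$, and $\max(|s|,|t|)^{r-1}\le |s|^{r-1}+|t|^{r-1}$.

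\emph{Item (ii).} Apply (i) pointwise and use Hölder with exponents $r/(r-1)$ and $r$:
\[
\|B(v_n)-B(v)\|_1\le C_r\bigl(\|v_n\|_r^{r-1}+\|v\|_r^{r-1}\bigr)\|v_n-v\|_r\to0,
\]
since $\|v_n\|_r$ stays bounded.

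\emph{Item (iii).} Split $J_\mu$ into four pieces. The quadratic $H^1$ part is smooth. The power term is $C^1$ on $H^1$ by the subcritical Sobolev embedding. The term $\frac12\int B(|u|)$ is $C^1$ on $H^1$: fix $r\in(2,2^*)$ and use the growth bound from (i) together with $L^r$ continuity of the Nemytskii map $u\mapsto B'(u)$ into $L^{r/(r-1)}$, which follows by dominated convergence along subsequences. The delicate piece is $\frac12\int A(|u|)$ on $V$. Here:
\begin{itemize}
\item Convexity, the $\Delta_2$ condition from Proposition~\ref{prop:orlicz-facts} and \eqref{eq:A-growth-indices} give $|A'(s)|\le 2A(s)/s$. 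Hence $A'(u)\in L^{\widetilde A}$ whenever $u\in V$, using the standard Young-function inequality $\widetilde A(A'(s))\le sA'(s)\le 2A(s)$.
\item The Gateaux derivative is $\int A'(u)\varphi$, by dominated convergence using convexity (difference quotients are monotone).
\item Continuity of $u\mapsto A'(u)$ from $V$ into $L^{\widetilde A}=V^*$ follows from a Vitali-type argument: if $u_n\to u$ in $V$, pass to a subsequence converging a.e. with $A(u_n)$ uniformly integrable (via \eqref{eq:modular-norm}). Then $\widetilde A(A'(u_n)-A'(u))$ is dominated by $C(A(u_n)+A(u))$, so $\int\widetilde A(A'(u_n)-A'(u))\to0$, which is norm convergence because $\widetilde A\in\Delta_2$.
\end{itemize}
Summing the derivatives, and using $A'(u)-B'(u)=-2u\log u^2-2u$ together with the $u^2$ term, gives \eqref{eq:action-derivative}. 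The functional $K_\mu$ (presumably $J_\mu'(u)[u]$) is treated identically, its integrand again being $A$, $B$, power and quadratic pieces. We expect the main obstacle to be exactly this step: continuity of $A'$ into the dual Orlicz space.

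\emph{Item (iv).} If $u\in W$ solves the equation in $\mathcal D'$, then \eqref{eq:action-derivative} vanishes for all $\varphi\in C_c^\infty$. By (iii), $J_\mu'(u)$ is a continuous linear functional on $W$, and $C_c^\infty$ is dense in $W$ by Proposition~\ref{prop:orlicz-facts}(iv). Hence $J_\mu'(u)=0$ on all of $W$, which is the weak formulation.
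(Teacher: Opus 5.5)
Your proposal is correct and follows essentially the same route as the paper. Item (i) comes from the vanishing of $B$ near the origin, the $O(|s|(1+\log^+|s|))$ growth of $B'$ and the mean value theorem; (ii) is H\"older; (iv) uses density of $C_c^\infty(\R^N)$ in $W$ together with $J_\mu'(u)\in W^*$. The one real difference is (iii): the paper only cites Cazenave and Shuai--Yang, whereas you sketch the standard Orlicz-space argument behind those references, and that sketch is sound. One wording point: on $\R^N$, uniform integrability alone does not give Vitali, so phrase your last step as generalized dominated convergence with the majorant $C\bigl(A(|u_n-u|)+A(|u|)\bigr)$, which is admissible since $\int A(|u_n-u|)\to0$.
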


\begin{proof}
Since $B$ vanishes in a neighborhood of the origin and
\[
    B'(s)=O\bigl(|s|(1+\log^+|s|)\bigr)
    \qquad\text{as }|s|\to\infty,
\]
for every fixed $r>2$ we have
\[
    |B'(s)|\le C_r |s|^{r-1}.
\]
The second estimate in \eqref{eq:B-mean-value} then follows from the
mean-value theorem, and assertion {\rm (ii)} follows immediately from
H\"older's inequality.

The $C^1$-regularity of the logarithmic energy functional on the
corresponding Orlicz--Sobolev space is standard; see
\cite[Propositions~2.7 and~3.1]{Cazenave1983}
and \cite[Lemma~2.2]{ShuaiYang2025}.
Hence $J_\mu\in C^1(W,\mathbb R)$ and its derivative is given by
\eqref{eq:action-derivative}. Moreover, using
\[
    K_\mu(u)
    =
    \|\nabla u\|_2^2
    +
    \int_{\mathbb R^N}A(|u|)\,dx
    -
    \int_{\mathbb R^N}B(|u|)\,dx
    -
    \mu\|u\|_{p+1}^{p+1}
\]
and  the subcritical Sobolev embedding,
we also obtain $K_\mu\in C^1(W,\mathbb R)$.

Finally, since $C_c^\infty(\mathbb R^N)$ is dense in $W$ and
$J_\mu'(u)\in W^*$, the distributional identity extends by continuity
from $C_c^\infty(\mathbb R^N)$ to all test functions
$\varphi\in W$. This proves {\rm(iv)}.
\end{proof}

\begin{definition}[Solutions and ground states]\label{def:solutions-ground-states}\leavevmode

  \begin{enumerate}
  \item []
A \emph{weak solution} is $u\in W$ such that $J_\mu'(u)=0$ in $W^*$, 
or, equivalently, such that \eqref{eq:action-derivative} vanishes for every
$\varphi\in W$.
\item[]
A \emph{centered radial positive solution} is positive and has
the form $u(x)=U(\abs{x})$ with its center fixed at the origin.
  \end{enumerate}

Set
\begin{equation*}
 \mathscr S_\mu=\{u\in W\setminus\{0\}:J_\mu'(u)=0\},
 \qquad
 \mathscr S_\mu^{\rad}=\mathscr S_\mu\cap W_{\rad}.
\end{equation*}
A \emph{ground state} minimizes $J_\mu$ over $\mathscr S_\mu$. Similaryly, a \emph{radial ground state} minimizes $J_\mu$ over $\mathscr S_\mu^{\rad}$. Theorem~\ref{thm:existence} shows that these two minimum levels agree.
\end{definition}

\begin{definition}[Nehari manifold and levels]\label{def:nehari}
Define
\begin{equation}\label{eq:K-definition}
 K_\mu(u)=J_\mu'(u)[u]
 =\norm{\nabla u}_2^2-\int u^2\log u^2
   -\mu\norm{u}_{p+1}^{p+1},
\end{equation}
\begin{equation*}
 \cN_\mu=\{u\in W\setminus\{0\}:K_\mu(u)=0\},
 \qquad c_\mu^{\cN}=\inf_{u\in\cN_\mu}J_\mu(u).
\end{equation*}
Once the equality \eqref{eq:level-equality-main} has been established, we denote its common value by
$c_\mu$ and write $\mathcal G_\mu$ for the set of ground states. We also set
\begin{equation*}
 \cG_\mu^+=\{u\in\cG_\mu:u>0\},\qquad
 \cG_\mu^{\rad}=\cG_\mu^+\cap W_{\rad}.
\end{equation*}
\end{definition}

\begin{definition}[Nondegeneracy]\label{def:nondegeneracy}
For a centered radial solution with a specified realization of its linearized operator, 
\emph{radial nondegeneracy} means that its kernel space in the
radial subspace is $\{0\}$.

\emph{Full nondegeneracy} means that its kernel space in $L^2(\mathbb R^N)$ is precisely
the span of the translation derivatives.

\end{definition}

\begin{proposition}[Radial compactness]\label{prop:radial-compactness}\textnormal{\cite[Proposition~3.1]{Cazenave1983}}
  If $u_n\rightharpoonup u$ in $W_{\mathrm{rad}}$, then
\begin{equation}\label{eq:radial-compactness}
    u_n\to u \quad\text{strongly in }L^q(\mathbb R^N)
    \quad\text{for every }2\le q<2^*.
\end{equation}

\end{proposition}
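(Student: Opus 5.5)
The plan is to use the Strauss radial decay estimate together with a compactness lemma of Strauss type. The statement is attributed to \cite[Proposition~3.1]{Cazenave1983}, but I would sketch a self-contained argument as follows. First, since $u_n\rightharpoonup u$ in $W_{\rad}$, the sequence is bounded in $H^1(\R^N)$ and in $V$. By the weak continuity of the embedding $W\hookrightarrow H^1$, we also have $u_n\rightharpoonup u$ in $H^1_{\rad}$. For radial $H^1$ functions with $N\ge2$, the Strauss inequality
\[
 \abs{v(x)}\le C_N\abs{x}^{-\frac{N-1}{2}}\norm{v}_2^{\frac12}\norm{\nabla v}_2^{\frac12}
 \qquad(\abs{x}\ge1)
\]
applied to $v=u_n$ gives $\sup_n\abs{u_n(x)}\le C\abs{x}^{-\frac{N-1}{2}}$ uniformly in $n$.

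Next I will prove $L^q$ convergence for $2<q<2^*$. On a ball $B_R$, the Rellich--Kondrachov theorem gives $u_n\to u$ in $L^q(B_R)$. On the exterior region, for $q>2$,
\[
 \int_{\abs{x}>R}\abs{u_n}^q\le \sup_{\abs{x}>R}\abs{u_n}^{q-2}\int\abs{u_n}^2\le CR^{-\frac{(N-1)(q-2)}{2}},
\]
and the same bound holds for $u$. This exterior contribution is therefore uniformly small as $R\to\infty$. A standard $\varepsilon/3$ argument then gives $u_n\to u$ in $L^q(\R^N)$ for all $2<q<2^*$.

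The main obstacle is the endpoint $q=2$, which is not compact in $H^1_{\rad}$ alone. This is where the Orlicz bound must be used. The idea is that $\sup_n\int A(\abs{u_n})<\infty$, and $A(s)=-s^2\log s^2$ for small $s$, so it dominates $s^2$ with a gain of the large factor $\log(1/s^2)$. On $\{\abs{x}>R\}$, the Strauss bound makes $\abs{u_n}\le\delta_R\to0$ uniformly in $n$. Hence
\[
 \int_{\abs{x}>R}u_n^2\le\frac{1}{\log(\delta_R^{-2})}\int_{\abs{x}>R}A(\abs{u_n})\le\frac{C}{\log(\delta_R^{-2})},
\]
which tends to $0$ uniformly in $n$ as $R\to\infty$; the same holds for $u$. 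Here the modular bound comes from \eqref{eq:modular-norm} together with boundedness in $V$. Combining this tail estimate with local Rellich compactness in $L^2(B_R)$ yields $u_n\to u$ in $L^2(\R^N)$.

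Finally, all intermediate exponents $2\le q<2^*$ are covered either directly or by interpolation. Note that weak convergence in $W$ alone already guarantees the $H^1$ bound that both estimates require.
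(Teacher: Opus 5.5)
Your argument is correct and complete. Strauss decay with a constant depending only on the $H^1$ bound, Rellich--Kondrachov on $B_R$, and the tail estimate $\int_{|x|>R}u_n^2\le \sup_n\rho_A(u_n)/\log(\delta_R^{-2})$ (valid once $\delta_R\le e^{-3}$) give the $L^2$ case, and the $L^q$ tails for $2<q<2^*$ are handled directly, so no interpolation is actually needed. The paper gives no proof of its own and only cites Cazenave; your proposal is the standard argument behind that result, namely uniform radial decay at infinity combined with $A(s)/s^2=\log(1/s^2)\to\infty$ as $s\downarrow0$ to make the $L^2$ tails uniformly small.
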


\subsection{Auxiliary estimates}\label{sec:preliminaries}

Throughout the paper, $C>0$ denotes a generic constant that may vary 
from line to line and depends only on $N$, $p$, the fixed exponents, 
and any prescribed upper bound such as $\mu_1$, unless otherwise stated.
 In particular, $C$ is independent of the particular solution and the sequence index.

We first record the estimates used below.  The Euclidean logarithmic
Sobolev inequality in the normalization of \cite{LiebLoss2001} states
that, for $u\in H^1(\R^N)\setminus\{0\}$ and $a>0$,
\begin{equation}\label{eq:log-sobolev}
 \int u^2\log u^2
 \le \frac{a^2}{\pi}\norm{\nabla u}_2^2+
 \bigl(\log\norm{u}_2^2-N(1+\log a)\bigr)\norm{u}_2^2.
\end{equation}
In every application below, we choose $a$ so that $\frac{a^2}{\pi}<1$ and verify separately that the $L^2$ norm is bounded.

\begin{lemma}[Pointwise logarithmic estimates]\label{lem:pointwise-log}\leavevmode
  \begin{enumerate}[\rm{(}i\rm{)}]
        \item 
For every $r>2$ and $\kappa>0$, there exists a constant
$C=C(r,\kappa)>0$ such that
\begin{equation}
    s^2\log s^2
    \le -\kappa s^2+C|s|^r,
    \qquad s\in\mathbb R.
    \label{eq:negative-log-control}
\end{equation}
\item 
For every $\varepsilon\in(0,1)$, there exists
$C_\varepsilon>0$ such that
\begin{equation*}
    |s\log s^2|
    \le C_\varepsilon
    \bigl(
        |s|^{1-\varepsilon}
        +
        |s|^{1+\varepsilon}
    \bigr),
    \qquad s\in\mathbb R.
\end{equation*}
\item 
More generally, for every $\delta\in(0,1)$ and $r>2$, there exists
$C_{\delta,r}>0$ such that
\begin{equation}
    |s\log s^2|
    \le C_{\delta,r}
    \bigl(
        |s|^{1-\delta}
        +
        |s|^{r-1}
    \bigr),
    \qquad s\in\mathbb R.
    \label{eq:log-vitali-growth}
\end{equation}
\end{enumerate}
\end{lemma}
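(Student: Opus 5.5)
The three bounds are elementary one-variable inequalities. The plan is to prove each by splitting $s$ into a small-amplitude range and a large-amplitude range, and comparing the logarithm with a power in each range. Since all the functions involved are even (with the convention $0\log0=0$), it suffices to treat $s>0$.

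\emph{Part (i).} Write the inequality as $s^2(\log s^2+\kappa)\le C s^r$.
\begin{itemize}
\item For $0<s\le e^{-\kappa/2}$ we have $\log s^2+\kappa\le0$. The left side is then nonpositive and the inequality holds with any $C\ge0$.
\item For $s>e^{-\kappa/2}$, divide by $s^r$ and consider $g(s)=s^{2-r}(\log s^2+\kappa)$. It is continuous on $[e^{-\kappa/2},\infty)$ and tends to $0$ as $s\to\infty$, because $r>2$ and the logarithm grows more slowly than any power. Hence $g$ is bounded above, and we take $C=C(r,\kappa)=\sup g$.
\end{itemize}

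\emph{Part (ii).} Again split at $s=1$.
\begin{itemize}
\item For $0<s\le1$ we have $|s\log s^2|=2s|\log s|$. Since $s^{\varepsilon}|\log s|\le (e\varepsilon)^{-1}$ on $(0,1]$, this gives $|s\log s^2|\le \tfrac{2}{e\varepsilon}s^{1-\varepsilon}$.
\item For $s\ge1$ we have $2\log s\le \tfrac{2}{e\varepsilon}s^{\varepsilon}$, which gives $|s\log s^2|\le \tfrac{2}{e\varepsilon}s^{1+\varepsilon}$.
\end{itemize}
Adding the two bounds yields (ii) with $C_\varepsilon=\tfrac{2}{e\varepsilon}$.

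\emph{Part (iii).} This is the same argument with the two exponents decoupled.
\begin{itemize}
\item On $(0,1]$, use $s^{\delta}|\log s|\le (e\delta)^{-1}$.
\item On $[1,\infty)$, use $\log s\le \bigl(e(r-2)\bigr)^{-1}s^{r-2}$, which is valid since $r-2>0$.
\end{itemize}
This gives $C_{\delta,r}=2\max\{(e\delta)^{-1},(e(r-2))^{-1}\}$. Alternatively, (iii) follows from (ii) with $\varepsilon=\min\{\delta,r-2\}$, because $s^{1-\varepsilon}\le s^{1-\delta}$ for $s\le1$ and $s^{1+\varepsilon}\le s^{r-1}$ for $s\ge1$.

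The only point needing any care is the elementary maximization $\sup_{s\in(0,1]}s^{a}|\log s|=(ea)^{-1}$ for $a>0$. This follows by differentiating $s^{a}\log(1/s)$: the derivative vanishes only at $s=e^{-1/a}$. The analogous bound on $[1,\infty)$ is obtained by substituting $s\mapsto 1/s$.
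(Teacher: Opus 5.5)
Your proof is correct and takes essentially the same route as the paper. For (i), the positive part of $(\log s^2+\kappa)|s|^{2-r}$ vanishes near the origin and tends to zero at infinity. Parts (ii) and (iii) come from splitting at $|s|=1$ and bounding $t^{\delta}|\log t|$ on $(0,1]$ and $t^{-(r-2)}\log t$ on $[1,\infty)$; you additionally give the explicit constants via $\sup_{(0,1]}s^{a}|\log s|=(ea)^{-1}$, which the paper leaves implicit.
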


\begin{proof}
For \eqref{eq:negative-log-control}, the positive part of
$(\frac{\log s^2+\kappa}{|s|^{r-2}})$ vanishes for sufficiently small
$|s|$ and tends to zero at infinity, so it is bounded. The remaining estimates follow by splitting at
$|s|=1$ and using the boundedness of
$t^\delta|\log t|$ on $(0,1]$ and
$t^{-(r-2)}\log t$ on $[1,\infty)$.
Related logarithmic growth estimates appear in
\cite{AlvesMoussaouiTavares2019}.
\end{proof}

\begin{lemma}[Radial $H^1$ bound]\label{lem:radial-bound}
  Let $u\in H^1_{\mathrm{rad}}(\mathbb R^N)$ and write
$u(x)=U(|x|)$, where $U\in AC_{\mathrm{loc}}(0,\infty)$ is its radial
representative.
Then, for every $r>0$,
\begin{equation*}
 \abs{U(r)}^2\le C_N r^{1-N}\norm{u}_2\norm{\nabla u}_2,
\end{equation*}
and hence $U(r)\to0$ as $r\to\infty$.  
\end{lemma}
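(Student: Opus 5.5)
This is the classical Strauss radial lemma. The plan is to prove it first for smooth compactly supported radial functions by integrating the derivative of $r^{N-1}U^2$ from $r$ to infinity, and then to pass to general $u$ by density.

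\emph{Smooth case.} Let $u\in C_c^\infty(\R^N)$ be radial. Then $U$ is smooth on $(0,\infty)$ and vanishes for large $r$. For $r>0$,
\[
 r^{N-1}U(r)^2 = -\int_r^\infty \frac{d}{ds}\bigl(s^{N-1}U(s)^2\bigr)\,ds
 = -\int_r^\infty \bigl((N-1)s^{N-2}U^2+2s^{N-1}UU'\bigr)\,ds .
\]
The first term inside the integral is nonnegative. Dropping it (with its minus sign) gives
\[
 r^{N-1}U(r)^2\le 2\int_r^\infty s^{N-1}\abs{U}\abs{U'}\,ds .
\]
Cauchy--Schwarz in $s^{N-1}\,ds$ bounds the right-hand side by
\[
 2\Bigl(\int_0^\infty s^{N-1}U^2\,ds\Bigr)^{1/2}\Bigl(\int_0^\infty s^{N-1}U'^2\,ds\Bigr)^{1/2}.
\]
Since $\abs{S^{N-1}}\int_0^\infty s^{N-1}U^2\,ds=\norm{u}_2^2$ and $\abs{\nabla u}=\abs{U'}$, this equals $2\abs{S^{N-1}}^{-1}\norm{u}_2\norm{\nabla u}_2$. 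So the inequality holds with $C_N=2/\abs{S^{N-1}}$.

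\emph{Density.} Radial $C_c^\infty$ functions are dense in $H^1_{\rad}(\R^N)$: average a $C_c^\infty$ approximant over $O(N)$, or mollify and cut off with radial kernels. Choose radial $u_n\to u$ in $H^1$. On any compact interval $[a,b]\subset(0,\infty)$, the weight $s^{N-1}$ is bounded above and below, so $U_n\to U$ in $H^1(a,b)$. The one-dimensional embedding into continuous functions then gives uniform convergence on $[a,b]$ to the absolutely continuous representative $U$. Passing to the limit in the inequality for $u_n$ proves it for every $r>0$.

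The decay statement follows at once, since $r^{1-N}\to0$ when $N\ge2$. The only delicate point is identifying the limit with the $AC_{\loc}$ representative, and the local 1D argument above handles it. Everything else is routine.
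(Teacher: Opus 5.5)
Your proposal is correct and is essentially the paper's approach: the paper proves this lemma only by citing Strauss's radial lemma, and you have written out the standard proof of that lemma, namely integrating $\frac{d}{ds}\bigl(s^{N-1}U^2\bigr)$ from $r$ to $\infty$, applying Cauchy--Schwarz, and then using density of radial $C_c^\infty$ functions with local $H^1(a,b)$ convergence to identify the $AC_{\loc}$ representative. This yields the explicit constant $C_N=2/\abs{S^{N-1}}$.
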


\begin{proof}
This is the classical Strauss radial lemma; see
\cite{Strauss1977}.
\end{proof}
For the limiting equation, testing with the solution itself requires control of both the positive and negative parts of the logarithmic integral.

\begin{lemma}[Testing a distributional logarithmic equation by the solution]
\label{lem:test-by-solution}
Suppose $u\in W$ satisfies
\begin{equation*}
 \int_{\R^N}\nabla u\cdot\nabla\varphi
 =\int_{\R^N} u\log u^2\,\varphi
\end{equation*}
for every $\varphi\in C_c^\infty(\R^N)$.  Then we have
\begin{equation}\label{eq:limit-nehari-identity}
 \norm{\nabla u}_2^2=\int_{\R^N} u^2\log u^2.
\end{equation}
\end{lemma}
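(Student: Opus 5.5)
The plan is to approximate $u$ by compactly supported smooth functions and pass to the limit on both sides of the tested identity. By Proposition~\ref{prop:orlicz-facts}(iv), $C_c^\infty(\R^N)$ is dense in $W$, so we choose $\varphi_n\in C_c^\infty(\R^N)$ with $\varphi_n\to u$ in $W$. In particular $\varphi_n\to u$ in $H^1(\R^N)$ and in $V$. Passing to a subsequence, we may also assume $\varphi_n\to u$ almost everywhere. The left-hand side is harmless: $\int\nabla u\cdot\nabla\varphi_n\to\norm{\nabla u}_2^2$ by $L^2$ convergence of gradients. All the work is in showing
\[
 \int_{\R^N} u\log u^2\,\varphi_n\longrightarrow\int_{\R^N}u^2\log u^2 .
\]

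To do this, we split $s\log s^2$ using the Young functions. For $s\ge0$ we have $s^2\log s^2=B(s)-A(s)$, which suggests writing
\[
 u\log u^2=\frac12 B'(u)-\frac12 A'(u)
\]
with the odd extensions of $A',B'$. The $B'$ part is handled by Proposition~\ref{prop:nonlinear-calculus}(i): fix $r\in(2,2^*)$, so that $|B'(u)|\le C_r|u|^{r-1}\in L^{r'}$. Sobolev embedding gives $\varphi_n\to u$ in $L^r$, and H\"older's inequality then yields convergence of $\int B'(u)\varphi_n$. The $A'$ part is the main obstacle, because the negative part of the logarithm is not controlled by any Lebesgue norm.

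For the $A'$ part we use Orlicz duality. By \eqref{eq:A-growth-indices}, $0\le sA'(s)\le 2A(s)$. Combined with the Young equality $\widetilde A(A'(s))=sA'(s)-A(s)\le A(s)$, this gives
\[
 \int\widetilde A(|A'(u)|)\le\int A(|u|)<\infty .
\]
Hence $A'(u)\in L^{\widetilde A}$, the dual of $V$ (both Young functions are $\Delta_2$). The H\"older inequality in Orlicz spaces then gives
\[
 \Bigl|\int A'(u)(\varphi_n-u)\Bigr|\le 2\norm{A'(u)}_{L^{\widetilde A}}\norm{\varphi_n-u}_V\to0 .
\]
The limit of this part is therefore $\int A'(u)u$.

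Finally, we reassemble the pieces. Since $\frac12(B'(u)-A'(u))u=u^2\log u^2$ pointwise (with $0\log0=0$), the two limits combine to $\int u^2\log u^2$, and both the positive and negative parts of this integral are finite by \eqref{eq:W-log-equivalence}. This proves \eqref{eq:limit-nehari-identity}.

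An alternative route, if one prefers to avoid duality, is to truncate and cut off, $\varphi_n=\eta_n T_k(u)$, mollifying if needed. One would then pass to the limit by dominated convergence, using the bound $|u\log u^2\,\varphi_n|\le |u|^2|\log u^2|\in L^1$ and the condition $\norm{\nabla\eta_n}_\infty\to0$.
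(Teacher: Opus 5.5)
Your argument is essentially the paper's. The paper also approximates $u$ in $W$ by $\varphi_n\in C_c^\infty(\R^N)$ and uses continuity of $\varphi\mapsto\int u\log u^2\,\varphi$ on $W$. It takes that continuity from $J_0'(u)\in W^*$ (Proposition~\ref{prop:nonlinear-calculus}(iii), cited to Cazenave). You verify it by hand, using Orlicz--H\"older with $\int\widetilde A(\abs{A'(u)})\le\int A(\abs u)$ for the $A'$ part and Lebesgue H\"older for the $B'$ part, and that verification is sound.

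There is one computational slip to correct. Since $\frac{d}{ds}(s^2\log s^2)=2s\log s^2+2s$, the splitting is $s\log s^2=\tfrac12B'(s)-\tfrac12A'(s)-s$, not $\tfrac12B'(s)-\tfrac12A'(s)$. Correspondingly, $\tfrac12(B'(u)-A'(u))u=u^2\log u^2+u^2$. Your two misstated identities cancel, so the final limit is right, but as written both are false. Carry the extra term $-\int u\varphi_n\to-\norm{u}_2^2$ explicitly; it converges immediately by $L^2$ convergence.

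Your alternative route also works and is more elementary, since it needs neither density of $C_c^\infty$ in $W$ nor Orlicz duality. Make one step explicit: mollifications of $\eta_nT_k(u)$ are bounded by $k$ and supported in a fixed compact set. The identity therefore extends to $\eta_nT_k(u)$ by dominated convergence, using only $u\log u^2\in L^1_{\loc}$. You then let $k\to\infty$ and $n\to\infty$, using $u^2\abs{\log u^2}\in L^1$.
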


\begin{proof}
By Proposition~\ref{prop:nonlinear-calculus} and formula~\eqref{eq:action-derivative}, the hypothesis is equivalent to
$J_0'(u)[\varphi]=0$ for every $\varphi\in C_c^\infty(\mathbb R^N)$.
Since $J_0'(u)\in W^*$ and $C_c^\infty(\mathbb R^N)$ is dense in $W$ by
Proposition~\ref{prop:orlicz-facts}, it follows that $J_0'(u)=0$ on $W$. Taking
$\varphi=u$ then yields \eqref{eq:limit-nehari-identity}.
\end{proof}

For later cutoff arguments, fix $\eta\in C_c^\infty(\R^N)$ with $0\le\eta\le1$, $\eta=1$ on $B_1$, and $\eta=0$ outside $B_2$, and set $\eta_R(x)=\eta(\frac x R)$.
 Then $\abs{\nabla\eta_R}\le \frac C R$.

The local compactness, weak lower semicontinuity, Sobolev embedding, and
Hilbert-space norm arguments used below can be found in \cite{Brezis2011}.  The
interior $W^{2,q}$ and Schauder estimates are used in their
translation-invariant ball versions from \cite{GilbargTrudinger1977}.

\section{Existence and qualitative properties}\label{sec:existence}

We prove Theorem~\ref{thm:existence} by minimizing on the Nehari
constraint. Rearrangement allows us to use radial compactness, while rescaling restores the constraint at the weak limit. After obtaining
a minimizer, we verify that the constraint is natural and establish
positivity and strict radial decrease. 
The equality case of rearrangement then establishes the symmetry of every positive ground state.

\subsection{Minimization on the Nehari manifold}

Schwarz rearrangement decreases the Dirichlet integral. The following scaling property restores the Nehari constraint and controls the action.

\begin{lemma}[Nehari projection]\label{lem:nehari-projection}

For every $u\in W\setminus\{0\}$, there exists a unique $t(u)>0$ such that
\[
K_\mu(t(u)u)=0,
\]
or equivalently,
\[
t(u)u\in\mathcal N_\mu.
\]
Moreover, we obtain
\[
K_\mu(u)<0 \ \Longrightarrow\ 0<t(u)<1,
\qquad
K_\mu(u)=0 \ \Longrightarrow\ t(u)=1.
\]
In particular, if $K_\mu(u)\le 0$, then $t(u)\le 1$.
Furthermore, for every $u\in\mathcal N_\mu$,
\begin{equation}\label{eq:energy-on-nehari}
 J_\mu(u)=\frac12\norm{u}_2^2
 +\frac{\mu(p-1)}{2(p+1)}\norm{u}_{p+1}^{p+1}>0.
\end{equation}

\end{lemma}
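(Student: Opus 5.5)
We reduce everything to the fiber map $g(t)=K_\mu(tu)$, $t>0$, for fixed $u\in W\setminus\{0\}$. Since $(tu)^2\log (tu)^2=t^2u^2\log u^2+t^2u^2\log t^2$, we get
\[
 K_\mu(tu)=t^2\Bigl(\norm{\nabla u}_2^2-\int u^2\log u^2-\norm{u}_2^2\log t^2-\mu t^{p-1}\norm{u}_{p+1}^{p+1}\Bigr)=:t^2 h(t).
\]
All integrals are finite because $u\in W$, by \eqref{eq:W-log-equivalence}. The function $h$ is continuous on $(0,\infty)$. Its derivative is
\[
 h'(t)=-\frac{2}{t}\norm{u}_2^2-\mu(p-1)t^{p-2}\norm{u}_{p+1}^{p+1}<0,
\]
so $h$ is strictly decreasing. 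Moreover $h(t)\to+\infty$ as $t\downarrow0$, because $-\norm{u}_2^2\log t^2\to+\infty$ while $\mu t^{p-1}\norm{u}_{p+1}^{p+1}\to0$. Also $h(t)\to-\infty$ as $t\to\infty$. By the intermediate value theorem and strict monotonicity there is a unique zero $t(u)$ of $h$, and hence of $t\mapsto K_\mu(tu)$ on $(0,\infty)$.

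The sign statements come from monotonicity of $h$ and $h(1)=K_\mu(u)$. If $K_\mu(u)<0$, then $h(1)<0=h(t(u))$, which forces $t(u)<1$. If $K_\mu(u)=0$, then uniqueness of the zero gives $t(u)=1$. Combining the two cases gives $t(u)\le1$ whenever $K_\mu(u)\le0$.

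For \eqref{eq:energy-on-nehari}, write $J_\mu$ in the form \eqref{eq:action-intro} and subtract $\tfrac12K_\mu(u)=0$. The gradient terms and the $u^2\log u^2$ terms cancel, leaving
\[
 J_\mu(u)=\tfrac12\norm{u}_2^2+\Bigl(\tfrac12-\tfrac1{p+1}\Bigr)\mu\norm{u}_{p+1}^{p+1}.
\]
Since $\tfrac12-\tfrac1{p+1}=\tfrac{p-1}{2(p+1)}$, this is exactly \eqref{eq:energy-on-nehari}. Positivity holds because $u\neq0$ gives $\norm{u}_2>0$, and $p>1$, $\mu>0$ make the second term nonnegative.

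The only point needing care is the algebraic splitting of the logarithm under scaling, together with finiteness of every integral involved, so that $h$ is a genuine real-valued function. Both follow from membership in $W$ and the subcritical embedding $H^1\subset L^{p+1}$. I expect no real obstacle here.
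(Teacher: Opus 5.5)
Your proposal is correct and follows the paper's proof essentially verbatim: the paper uses the same fiber function $F_u(t)=K_\mu(tu)/t^2=G-H-M\log t^2-\mu t^{p-1}P$, its strict monotonicity and limits at $0$ and $\infty$, the comparison via $F_u(1)=K_\mu(u)$, and substitution of the Nehari identity into $J_\mu$ to obtain \eqref{eq:energy-on-nehari}.
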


\begin{proof}
Put $G=\|\nabla u\|_2^2$, $M=\|u\|_2^2>0$, $H=\int u^2\log u^2$, and $P=\|u\|_{p+1}^{p+1}$. For $t>0$,
\begin{equation}\label{eq:fiber-K}
F_u(t):=\frac{K_\mu(tu)}{t^2}=G-H-M\log t^2-\mu t^{p-1}P.
\end{equation}
This function tends to $+\infty$ as $t\downarrow0$ and to $-\infty$ as $t\to\infty$, while
\[
F_u'(t)=-\frac{2M}{t}-\mu(p-1)t^{p-2}P<0.
\]
It therefore has exactly one zero $t(u)$. Since $F_u(1)=K_\mu(u)$, strict monotonicity gives the stated comparison with one. Finally, for $v\in\cN_\mu$, the identity $K_\mu(v)=0$ gives
$\int v^2\log v^2=\|\nabla v\|_2^2-\mu\|v\|_{p+1}^{p+1}$.
Substitution into $J_\mu(v)$ yields \eqref{eq:energy-on-nehari}, whose right-hand side is positive because $v\ne0$.
\end{proof}

\begin{lemma}[Separation and positivity of the level]\label{lem:positive-level}
There exists $\rho=\rho(N,p,\mu)>0$ such that
\begin{equation}\label{eq:nehari-separation}
 \norm{u}_{2}^2+\norm{\nabla u}_{2}^2\ge\rho^2\qquad(u\in\cN_\mu),
\end{equation}
and $0<c_\mu^{\cN}<\infty$.
\end{lemma}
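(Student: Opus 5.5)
We need to prove that there is $\rho>0$ with $\norm{u}_2^2+\norm{\nabla u}_2^2\ge\rho^2$ on $\cN_\mu$, and that $0<c_\mu^{\cN}<\infty$.

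\emph{Separation.} Let $u\in\cN_\mu$ and fix an exponent $r\in(2,2^*)$. Apply \eqref{eq:negative-log-control} with $\kappa=1$:
\[
\int u^2\log u^2\le -\norm{u}_2^2+C\norm{u}_r^r .
\]
Insert this into the constraint $K_\mu(u)=0$, that is,
\[
\norm{\nabla u}_2^2=\int u^2\log u^2+\mu\norm{u}_{p+1}^{p+1},
\]
to obtain
\[
\norm{\nabla u}_2^2+\norm{u}_2^2\le C\norm{u}_r^r+\mu\norm{u}_{p+1}^{p+1}.
\]
Since both $r$ and $p+1$ lie in $(2,2^*)$, the Sobolev embedding $H^1(\R^N)\hookrightarrow L^q$ bounds the right-hand side by
\[
C\bigl(\norm{u}_{H^1}^r+\mu\norm{u}_{H^1}^{p+1}\bigr).
\]
Write $s=\norm{u}_{H^1}$; note $s>0$ because $u\ne0$. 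The inequality becomes
\[
s^2\le C(s^r+\mu s^{p+1}),\qquad\text{i.e.}\qquad 1\le C(s^{r-2}+\mu s^{p-1}).
\]
Both exponents $r-2$ and $p-1$ are positive, so this forces $s\ge\rho$ for some $\rho=\rho(N,p,\mu)>0$. This proves \eqref{eq:nehari-separation}.

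\emph{Positivity of the level.} Next convert the $H^1$ lower bound into an energy lower bound using \eqref{eq:energy-on-nehari}. That identity only gives $J_\mu(u)\ge\frac12\norm{u}_2^2$, and this is the main obstacle: the separation controls $\norm{u}_{H^1}$, not the $L^2$ norm alone.

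To handle it, argue by contradiction. Suppose $u_n\in\cN_\mu$ with $J_\mu(u_n)\to0$. Then by \eqref{eq:energy-on-nehari},
\[
\norm{u_n}_2\to0\qquad\text{and}\qquad \norm{u_n}_{p+1}\to0 .
\]
Now use the logarithmic Sobolev inequality \eqref{eq:log-sobolev} with some fixed $a$ satisfying $a^2/\pi\le\frac12$. Combined with the constraint, it gives
\[
\norm{\nabla u_n}_2^2\le\tfrac12\norm{\nabla u_n}_2^2+\bigl(\log\norm{u_n}_2^2-N(1+\log a)\bigr)\norm{u_n}_2^2+\mu\norm{u_n}_{p+1}^{p+1}.
\]
Since $\norm{u_n}_2\to0$, the logarithm becomes very negative, so the middle term is eventually negative. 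The last term tends to $0$. Hence $\norm{\nabla u_n}_2\to0$. Together with $\norm{u_n}_2\to0$ this gives $\norm{u_n}_{H^1}\to0$, contradicting the separation just proved. Therefore $c_\mu^{\cN}>0$.

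\emph{Finiteness.} It remains to show $c_\mu^{\cN}<\infty$, i.e.\ that $\cN_\mu$ is nonempty. Take any $u\in W\setminus\{0\}$, for instance $U_0$ or any element of $C_c^\infty(\R^N)\setminus\{0\}$. Lemma~\ref{lem:nehari-projection} gives $t(u)u\in\cN_\mu$, so $\cN_\mu\ne\emptyset$. Because $J_\mu$ is finite on $W$, $c_\mu^{\cN}\le J_\mu(t(u)u)<\infty$.
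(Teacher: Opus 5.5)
Your proposal is correct and follows essentially the same route as the paper: separation via \eqref{eq:negative-log-control} plus the Nehari identity and Sobolev embedding, finiteness via the Nehari projection of a nonzero test function, and positivity by contradiction using the logarithmic Sobolev inequality \eqref{eq:log-sobolev} with $\frac{a^2}{\pi}<1$ to force $\norm{u_n}_{H^1}\to0$. The only differences are cosmetic: you fix $\kappa=1$ and $\frac{a^2}{\pi}\le\frac12$, and you order the positivity and finiteness steps differently.
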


\begin{proof}
Choose $r\in(2,2^*)$ and $\kappa>0$.  From
\eqref{eq:negative-log-control}, $K_\mu(u)=0$ and Sobolev embedding,
\begin{equation*}
  \min\{1,\kappa\}\norm{u}_{H^1}^2
 \le \norm{\nabla u}_2^2+\kappa\norm{u}_2^2
 \le C_{\kappa,r}\norm{u}_r^r+\mu\norm{u}_{p+1}^{p+1}
 \le C\norm{u}_{H^1}^r+C\norm{u}_{H^1}^{p+1}.
\end{equation*}
Division by $\norm{u}_{H^1}^2$ proves
\eqref{eq:nehari-separation}.
For any nonzero $\phi\in C_c^\infty(\R^N)$,
Lemma~\ref{lem:nehari-projection} gives
$c_\mu^{\cN}\le J_\mu(t(\phi)\phi)<\infty$.
Equation \eqref{eq:energy-on-nehari} gives nonnegativity.

If $c_\mu^{\cN}=0$, take $u_n\in\cN_\mu$ with $J_\mu(u_n)\to0$.
Equation \eqref{eq:energy-on-nehari} yields
$\norm{u_n}_2\to0$ and $\norm{u_n}_{p+1}\to0$.  Fix $a>0$ with
$\eta=\frac{a^2}{\pi}<1$.  Combining \eqref{eq:log-sobolev} with
$K_\mu(u_n)=0$ gives
\begin{equation}\label{eq:positive-level-calculation}
 (1-\eta)\norm{\nabla u_n}_2^2
 \le \norm{u_n}_2^2\log\norm{u_n}_2^2
 -N(1+\log a)\norm{u_n}_2^2
 +\mu\norm{u_n}_{p+1}^{p+1}\rightarrow 0\quad (n\rightarrow\infty).
\end{equation}
Therefore, $u_n\to0$ in $H^1$, contrary to
\eqref{eq:nehari-separation}.
\end{proof}

\begin{proposition}[Attainment]\label{prop:attainment}
The level $c_\mu^{\cN}$ is attained by a nonnegative, radial, radially nonincreasing function $u\in W$.
\end{proposition}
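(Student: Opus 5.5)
We minimize directly over the Nehari set and symmetrize. Take a minimizing sequence $u_n\in\cN_\mu$ with $J_\mu(u_n)\to c_\mu^{\cN}$.

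\emph{Symmetrization.} Replace $u_n$ by $\abs{u_n}$. This keeps $K_\mu$ and $J_\mu$ unchanged, since $\abs{\nabla\abs{u}}=\abs{\nabla u}$ a.e. Then pass to the Schwarz rearrangement $u_n^*$. Equimeasurability preserves $\norm{\cdot}_2$, $\norm{\cdot}_{p+1}$ and $\int u^2\log u^2$; the last holds because both $A(\abs u)$ and $B(\abs u)$ are functions of $\abs u$ alone and are integrable. The P\'olya--Szeg\H{o} inequality gives $\norm{\nabla u_n^*}_2\le\norm{\nabla u_n}_2$, so $K_\mu(u_n^*)\le0$. By Lemma~\ref{lem:nehari-projection}, $t_n=t(u_n^*)\le1$ and $v_n=t_nu_n^*\in\cN_\mu$. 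Formula \eqref{eq:energy-on-nehari} is increasing in $t$, so
\[
J_\mu(v_n)=\tfrac{t_n^2}{2}\norm{u_n}_2^2+\tfrac{\mu(p-1)t_n^{p+1}}{2(p+1)}\norm{u_n}_{p+1}^{p+1}\le J_\mu(u_n).
\]
Hence $(v_n)$ is a nonnegative, radial, radially nonincreasing minimizing sequence in $\cN_\mu$.

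\emph{Bounds in $W$.} Formula \eqref{eq:energy-on-nehari} bounds $\norm{v_n}_2$ and $\norm{v_n}_{p+1}$. Inserting the logarithmic Sobolev inequality \eqref{eq:log-sobolev} with $a^2/\pi<1$ into $K_\mu(v_n)=0$, exactly as in \eqref{eq:positive-level-calculation}, bounds $\norm{\nabla v_n}_2$. Then $\int A(v_n)=\int B(v_n)-\int v_n^2\log v_n^2$, where $\int B(v_n)\le C\norm{v_n}_{H^1}^r$ by \eqref{eq:B-mean-value} and $\int v_n^2\log v_n^2=\norm{\nabla v_n}_2^2-\mu\norm{v_n}_{p+1}^{p+1}$. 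So $(v_n)$ is bounded in $W$. Since $W$ is reflexive (Proposition~\ref{prop:orlicz-facts}), a subsequence converges weakly in $W_{\rad}$ and a.e. to some $v$. By Proposition~\ref{prop:radial-compactness}, $v_n\to v$ in $L^q$ for all $2\le q<2^*$. Nonnegativity, radial symmetry and monotonicity pass to the a.e. limit.

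\emph{Nontriviality and recovering the constraint.} This is the main obstacle, because the negative part $\int A$ is only weakly lower semicontinuous. First, $v\ne0$. If $v=0$, then $\int B(v_n)\to0$ and $\norm{v_n}_{p+1}\to0$, so $K_\mu(v_n)=0$ gives $\norm{\nabla v_n}_2^2+\int A(v_n)\to0$. With \eqref{eq:negative-log-control} and $r\in(2,2^*)$ this yields $\norm{\nabla v_n}_2^2+\kappa\norm{v_n}_2^2\le C\norm{v_n}_r^r+\mu\norm{v_n}_{p+1}^{p+1}\to0$, contradicting \eqref{eq:nehari-separation}. Next, lower semicontinuity of $\norm{\nabla\cdot}_2^2$ and of $\int A$ (Fatou), together with strong convergence of the $B$ and power terms, gives $K_\mu(v)\le\liminf K_\mu(v_n)=0$. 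Lemma~\ref{lem:nehari-projection} then yields $t=t(v)\le1$ with $tv\in\cN_\mu$. By \eqref{eq:energy-on-nehari} and strong $L^2$, $L^{p+1}$ convergence,
\[
c_\mu^{\cN}\le J_\mu(tv)\le J_\mu\text{-expression at }v=\lim J_\mu(v_n)=c_\mu^{\cN}.
\]
Equality forces $t=1$, since the expression is strictly increasing in $t$ and $v\ne0$. So $u:=v\in\cN_\mu$ attains $c_\mu^{\cN}$, and $u$ is nonnegative, radial and radially nonincreasing.
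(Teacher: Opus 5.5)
Your proposal is correct and follows the paper's proof essentially step for step: rearrangement plus Nehari projection with $t_n\le1$, uniform $W$-bounds via the logarithmic Sobolev inequality and the Nehari identity, radial compactness, lower semicontinuity to get $K_\mu(v)\le0$, and monotonicity of \eqref{eq:energy-on-nehari} in $t$ to force $t=1$. The differences are cosmetic. You exclude $v=0$ through the separation estimate \eqref{eq:nehari-separation}, whereas the paper passes to the limit in \eqref{eq:energy-on-nehari} and uses $c_\mu^{\cN}>0$. You also get lower semicontinuity of $\int A$ from Fatou and a.e. convergence, whereas the paper uses weak lower semicontinuity of the convex functional.
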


\begin{proof}
Let $v_n\in\cN_\mu$ be a minimizing sequence.  Replace $\abs{v_n}$ by its Schwarz
rearrangement $w_n=\abs{v_n}^*$.  Equimeasurability preserves the
$L^2$, $L^{p+1}$, $A$, $B$, and logarithmic integrals, while the
P\'olya--Szeg\H{o} inequality gives
$\norm{\nabla w_n}_2\le\norm{\nabla v_n}_2$; these rearrangement properties are stated in \cite[Chapter~3]{LiebLoss2001}.  Consequently
$K_\mu(w_n)\le0$.  Let $t_n=t(w_n)\le1$ and set $u_n=t_nw_n$.  Then
$u_n\in\cN_\mu$, and \eqref{eq:energy-on-nehari} shows
\[
 c_\mu^{\cN}\le J_\mu(u_n)\le J_\mu(v_n).
\]
Thus $(u_n)$ is a nonnegative, radial, radially nonincreasing minimizing sequence.

Identity \eqref{eq:energy-on-nehari} yields uniform bounds for
\(\|u_n\|_2\) and \(\|u_n\|_{p+1}\). Choosing \(a>0\) so that \(\frac{a^2}{\pi}<1\),
 the argument leading to \eqref{eq:positive-level-calculation}
also yields a uniform bound for \(\|\nabla u_n\|_2\).
  From $u_n^2\log u_n^2=B(u_n)-A(u_n)$ and the
Nehari identity,
\begin{equation}\label{eq:A-modular-bound}
 \int_{\R^N} A(\abs{u_n})
 =\int_{\R^N} B(\abs{u_n})-\norm{\nabla u_n}_2^2
   +\mu\norm{u_n}_{p+1}^{p+1}
 \le C\norm{u_n}_r^r+\mu\norm{u_n}_{p+1}^{p+1}
\end{equation}
for a fixed $r\in(2,2^*)$.  Proposition~\ref{prop:orlicz-facts} converts
this modular bound into a $V$-norm bound.  Hence $(u_n)$ is bounded in
$W_{\rad}$.

After passing to a subsequence, \(u_n\rightharpoonup u\) in \(W\).
By Proposition~\ref{prop:radial-compactness}, we have
\[
u_n\to u \quad\text{strongly in }L^2(\mathbb R^N)
\text{ and in }L^r(\mathbb R^N)\ \text{for every }2<r<2^*,
\]
and, after passing to a further subsequence if necessary,
\[
u_n(x)\to u(x)\qquad\text{for a.e. }x\in\mathbb R^N.
\]
Hence, by \eqref{eq:B-mean-value}, \(B(u_n)\to B(u)\) in \(L^1(\mathbb R^N)\).

Passing to the limit in \eqref{eq:energy-on-nehari} yields
\begin{equation}\label{eq:limit-energy-minimizer}
 c_\mu^{\cN}
 =\frac12\norm{u}_2^2
 +\frac{\mu(p-1)}{2(p+1)}\norm{u}_{p+1}^{p+1},
\end{equation}
Since \(c_\mu^\cN>0\) by Lemma~\ref{lem:positive-level}, it follows that \(u\neq0\).

Weak lower semicontinuity of the convex functional
$\norm{\nabla v}_2^2+\int A(\abs v)$, together with convergence of the
$B$ and power terms, gives $K_\mu(u)\le0$.  Let $t=t(u)\le1$.  Since
$tu\in\cN_\mu$, \eqref{eq:limit-energy-minimizer} gives
\[
 c_\mu^{\cN}\le J_\mu(tu)
 =\frac{t^2}{2}\norm{u}_2^2
 +\frac{\mu(p-1)t^{p+1}}{2(p+1)}\norm{u}_{p+1}^{p+1}
 \le c_\mu^{\cN}.
\]
Because $\norm{u}_2>0$, equality forces $t=1$.  Thus
$u\in\cN_\mu$ and $J_\mu(u)=c_\mu^{\cN}$.

The almost-everywhere limit of the nonincreasing radial representatives
is nonnegative and nonincreasing.
\end{proof}

\begin{proposition}[Natural constraint]\label{prop:natural-constraint}
Every minimizer of $J_\mu$ on $\cN_\mu$ is an unconstrained critical
point of $J_\mu$.
\end{proposition}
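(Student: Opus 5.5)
The plan is to use the Lagrange multiplier rule on $\cN_\mu=\{K_\mu=0\}\setminus\{0\}$ and show the multiplier vanishes. Let $u\in\cN_\mu$ with $J_\mu(u)=c_\mu^{\cN}$.

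\emph{Step 1: $\cN_\mu$ is a $C^1$ manifold near $u$.} By Proposition~\ref{prop:nonlinear-calculus}, $K_\mu\in C^1(W)$. We show $K_\mu'(u)\neq0$ by evaluating it at $u$. Using the fibering function $F_u$ from \eqref{eq:fiber-K}, we have $K_\mu(tu)=t^2F_u(t)$. Differentiating at $t=1$, and using $F_u(1)=0$, gives
\[
K_\mu'(u)[u]=\frac{d}{dt}\Big|_{t=1}K_\mu(tu)=F_u'(1)=-2\norm{u}_2^2-\mu(p-1)\norm{u}_{p+1}^{p+1}<0 .
\]
One can double-check this directly. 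For $u\log u^2$ the derivative of $s\mapsto s^2\log s^2$ is $2s\log s^2+2s$. Hence
\[
K_\mu'(u)[u]=2\norm{\nabla u}_2^2-2\int u^2\log u^2-2\norm{u}_2^2-\mu(p+1)\norm{u}_{p+1}^{p+1}.
\]
Substituting the Nehari identity recovers the same expression.

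For this computation to be legitimate, $K_\mu$ must be differentiable along the ray $t\mapsto tu$ with the expected derivative. Here I rely on Proposition~\ref{prop:nonlinear-calculus}(iii). Alternatively, I compute $\frac{d}{dt}K_\mu(tu)$ explicitly from the formula $K_\mu(tu)=t^2(G-H-M\log t^2-\mu t^{p-1}P)$. Only the directional derivative along $u$ is needed to conclude $K_\mu'(u)\ne0$.

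\emph{Step 2: multiplier rule.} Since $u$ minimizes $J_\mu$ on the level set $\{K_\mu=0\}$, and $K_\mu'(u)\neq0$ in $W^*$, the Lagrange multiplier theorem on the Banach space $W$ applies. It gives $\lambda\in\R$ with $J_\mu'(u)=\lambda K_\mu'(u)$. Since $W$ is reflexive, the standard Lyusternik argument works: $\ker K_\mu'(u)$ is closed of codimension one, and the implicit function theorem gives curves in $\cN_\mu$ tangent to any direction in the kernel. Note that $u\ne0$ and $\cN_\mu$ is relatively open in $\{K_\mu=0\}$, so the point $0$ causes no issue.

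\emph{Step 3: $\lambda=0$.} Test the identity against $u$. The left side is $J_\mu'(u)[u]=K_\mu(u)=0$. The right side is $\lambda K_\mu'(u)[u]$, which is nonzero unless $\lambda=0$. Hence $\lambda=0$ and $J_\mu'(u)=0$ in $W^*$.

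The only delicate point is Step 1: justifying the differentiation of the logarithmic term along the ray, and the $C^1$ regularity of $K_\mu$ on $W$. The latter is stated in Proposition~\ref{prop:nonlinear-calculus}, so I expect no real obstacle. If one wishes to avoid the abstract multiplier theorem altogether, a deformation alternative is available. Suppose $J_\mu'(u)[\varphi]<0$ for some $\varphi$. Consider $u+s\varphi$ and project it back onto $\cN_\mu$ by the scaling $t(u+s\varphi)$ from Lemma~\ref{lem:nehari-projection}. Continuity of $t(\cdot)$ near $u$ follows from the implicit function theorem, since $\partial_tF_u<0$. Comparing energies, with $t\mapsto J_\mu(tv)$ maximized at $t(v)$, then contradicts minimality.
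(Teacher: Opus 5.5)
Your proposal is correct and takes the same route as the paper. You compute $K_\mu'(u)[u]=-2\norm{u}_2^2-\mu(p-1)\norm{u}_{p+1}^{p+1}<0$ along the ray $t\mapsto tu$, apply the Lagrange multiplier rule on the $C^1$ hypersurface $\cN_\mu$, and test $J_\mu'(u)=\lambda K_\mu'(u)$ against $u$ to force $\lambda=0$; reflexivity of $W$ is not needed, since the multiplier rule only requires $K_\mu'(u)\neq0$.
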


\begin{proof}
For $u\in\cN_\mu$, differentiation of $K_\mu(tu)$ at $t=1$ gives
\begin{equation}\label{eq:K-radial-derivative}
 K_\mu'(u)[u]
 =-2\norm{u}_2^2-\mu(p-1)\norm{u}_{p+1}^{p+1}<0.
\end{equation}
Thus $\cN_\mu$ is locally a $C^1$ hypersurface. The Lagrange
multiplier theorem gives $J_\mu'(u)=\lambda K_\mu'(u)$ at a
constrained minimizer.

Evaluating the Lagrange multiplier identity on \(u\) gives
\[
0=J_\mu'(u)[u]
 =\lambda K_\mu'(u)[u].
\]
By \eqref{eq:K-radial-derivative}, \(K_\mu'(u)[u]<0\); hence \(\lambda=0\), and therefore
\(J_\mu'(u)=0\).

\end{proof}

\subsection{Regularity, positivity, and variational identities}

The logarithmic nonlinearity is smooth on $(0,\infty)$ but has an unbounded derivative at zero. We first establish boundedness and strict positivity, and then apply elliptic regularity.

\begin{proposition}[Regularity and positivity]\label{prop:regularity}

For every \(\mu\ge0\). Every nonzero, nonnegative distributional solution
\(u\in W\) of \eqref{eq:main} satisfies
\[
u\in C^\infty(\mathbb R^N),
\qquad
u>0 \quad\text{in }\mathbb R^N.
\]
Moreover, its distributional formulation extends to all test functions
\(\varphi\in W\).

\end{proposition}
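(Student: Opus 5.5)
The proof has four steps: a local $L^\infty$ bound, H\"older regularity, strict positivity from the sign of the logarithm, and a bootstrap. The extension of the weak formulation to all $\varphi\in W$ is already Proposition~\ref{prop:nonlinear-calculus}(iv), since $J_\mu'(u)\in W^*$ and $C_c^\infty(\R^N)$ is dense in $W$. So the work is regularity and strict positivity.

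\textbf{Step 1: $L^\infty$ bound.} Write the equation as $-\Delta u=g(u)$ with $g(s)=s\log s^2+\mu s^p$. For $s\ge0$ and any $r\in(2,2^*)$, Lemma~\ref{lem:pointwise-log} gives $g(s)\le C(1+s^{r-1})$, since the negative part of $s\log s^2$ is bounded. Hence $|g(u)|\le C(1+|u|^{r-1})$ with $r-1<2^*-1$ subcritical. I will run a Moser iteration (or Brezis--Kato) by testing with $u\min\{u,L\}^{2\beta}\eta_R^2$, which lies in $W$ because it is a Lipschitz truncation times a cutoff. This gives $u\in L^q_{\loc}$ for all $q<\infty$, and then $u\in L^\infty_{\loc}$ with bounds uniform on unit balls. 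When $N=2$ every $q$ is already available from the embedding. The appendix iteration can be cited here.

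\textbf{Step 2: H\"older regularity.} Once $u\in L^\infty_{\loc}$, the function $g(u)$ is locally bounded, because $s\log s^2$ is continuous at $0$. Interior $W^{2,q}$ estimates then give $u\in W^{2,q}_{\loc}$ for every $q<\infty$, hence $u\in C^{1,\alpha}_{\loc}$. This step is insensitive to the singular derivative of $g$ at zero.

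\textbf{Step 3: strict positivity (the main obstacle).} The difficulty is that $g(s)/s=\log s^2+\mu s^{p-1}\to-\infty$ as $s\downarrow0$, so the standard strong maximum principle with a bounded potential fails. Indeed, for the one-dimensional profile $-w''=w\log w^2$ near a zero, compactly supported behavior is not excluded a priori. The plan is to use the sign of the nonlinearity instead.
\begin{itemize}
\item On $\{0\le u<1\}$ we have $-\Delta u\ge u\log u^2$, i.e.\ $\Delta u\le \beta(u)$ with $\beta(s)=-s\log s^2\ge0$ for small $s$.
\item Apply V\'azquez's strong maximum principle for $\Delta u\le\beta(u)$. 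It requires $\int_0\bigl(\int_0^s\beta\bigr)^{-1/2}ds=\infty$.
\item Here $\int_0^s\beta\approx -\tfrac12 s^2\log s^2$, so the integrand is about $\bigl(s\sqrt{|\log s|}\bigr)^{-1}$. Its integral $\int_0 ds/(s\sqrt{|\log s|})$ diverges, since $\int_0^1 dt/(t\sqrt{|\log t|})=\int^\infty dy/\sqrt y=\infty$.
\end{itemize}
So the condition holds, and $u\ge0$, $u\not\equiv0$ forces $u>0$ everywhere. If I want to avoid citing V\'azquez, the fallback is an explicit local barrier. I would compare with $w=\varepsilon e^{-k|x-x_0|^2}$-type or Gaussian-type subsolutions on annuli, mimicking the Hopf lemma, where the Gaussian structure of $U_0$ suggests the right barrier.

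\textbf{Step 4: bootstrap.} Once $u>0$, on each compact set $u\ge c>0$, so $g$ is $C^\infty$ on the range of $u$ there. Schauder estimates then bootstrap $C^{k,\alpha}_{\loc}\Rightarrow C^{k+2,\alpha}_{\loc}$, giving $u\in C^\infty(\R^N)$.
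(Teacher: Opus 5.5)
Your proposal follows the paper's proof essentially step by step: a Moser iteration gives an $L^\infty$ bound, interior $W^{2,q}$ estimates give $C^{1,\alpha}_{\loc}$, V\'azquez's strong maximum principle applies via the Osgood divergence $\int_0 ds/(s\sqrt{|\log s|})=\infty$, and Schauder bootstrapping finishes. Dropping the nonnegative term $\mu u^p$ to work with $\Delta u\le -u\log u^2$, instead of the paper's exact $\beta(s)=-s\log s^2-\mu s^p$, is a harmless variant.

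The one detail to tighten is that V\'azquez's theorem requires $\beta$ to be nondecreasing, and $-s\log s^2$ is nondecreasing only on $[0,e^{-1}]$, not on $[0,1)$. So, as the paper does, truncate at some $\delta\le e^{-1}$, extend $\beta$ constantly, apply the theorem on a ball around a putative zero where $u<\delta$, and conclude with the open--closed connectedness argument.
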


\begin{proof}
  By Proposition~\ref{prop:nonlinear-calculus}, the distributional identity extends to every test function
$\varphi\in W$.
 Lemma~\ref{lem:moser-iteration}, applied with $\bar\mu=\max\{1,\mu\}$ and $M_H=\norm u_{H^1}$,
  gives $u\in L^\infty(\R^N)$. 
  
  The map \(s\mapsto s\log s^2+\mu s^p\), extended continuously at \(s=0\),
is bounded on the range of \(u\). Hence the interior \(W^{2,q}\)-estimates give
\begin{equation}
u\in W_{\mathrm{loc}}^{2,q}(\mathbb R^N)
\qquad\text{for every }1<q<\infty.
\end{equation}
Choosing \(q>N\) sufficiently large and using the Sobolev embedding then yields
\begin{equation}\label{eq:preliminary-regularity}
u\in C_{\mathrm{loc}}^{1,\alpha}(\mathbb R^N)
\qquad\text{for every }0<\alpha<1.
\end{equation}

It remains to prove strict positivity without assuming local Lipschitz continuity at zero.  Set, for $s\ge0$,
\begin{equation*}
 \beta(s)=-s\log s^2-\mu s^p,\qquad \beta(0)=0.
\end{equation*}
Then $-\Delta u+\beta(u)=0$, and
\[
 \beta'(s)=-\log s^2-2-\mu p s^{p-1}\longrightarrow+\infty
 \quad(s\downarrow0).
\]
Choose $\delta>0$ small enough that $\beta$ is nonnegative and nondecreasing on $[0,\delta]$.
Moreover,
\begin{equation*}
 \beta(s)\sim2s\abs{\log s},
 \qquad
 \int_0^s\beta(t)\,dt\sim s^2\abs{\log s}
 \quad(s\downarrow0),
\end{equation*}
and consequently
\begin{equation}\label{eq:osgood-divergence}
 \int_0^\delta\frac{ds}{\sqrt{s\beta(s)}}=\infty,
 \qquad
 \int_0^\delta\frac{ds}
 {\sqrt{\int_0^s\beta(t)\,dt}}=\infty.
\end{equation}
Indeed, each integrand is comparable to
$\frac 1 {s\sqrt{\abs{\log s}}}$; substituting
$y=-\log s$ proves divergence.  Extend $\beta|_{[0,\delta]}$
continuously and nondecreasingly to $[0,\infty)$ by keeping it constant after $\delta$.  If $u(x_0)=0$, continuity provides a ball on
which $0\le u<\delta$.  The localized strong maximum principle
\cite[Theorem~1]{Vazquez1984} applies there because
\eqref{eq:osgood-divergence} verifies its Osgood condition, and
$u,\Delta u\in L^1_{\loc}$ by \eqref{eq:preliminary-regularity}.
It follows that $u$ vanishes throughout that ball.  Hence the zero set is
both open and closed.  Connectedness and $u\not\equiv0$ imply $u>0$.

For every compact set \(K\Subset\mathbb R^N\), positivity and continuity give
\[
0<m_K\le u(x)\le M_K<\infty
\qquad (x\in K).
\]
Hence \(s\mapsto s\log s^2+\mu s^p\) is smooth on
\([m_K,M_K]\Subset(0,\infty)\). Schauder bootstrapping, using the interior
estimates of \cite{GilbargTrudinger1977}, therefore gives
\(u\in C^\infty(\mathbb R^N)\).

\end{proof}

\begin{lemma}[Pohozaev identities]\label{lem:pohozaev}
Let $u\in W$ be a positive weak solution with parameter $\mu\ge0$.
Then
\begin{align}
 \norm{\nabla u}_2^2
 &=\int_{\R^N} u^2\log u^2+\mu\norm{u}_{p+1}^{p+1},
 \label{eq:nehari-solution}\\
 \frac{N-2}{2}\norm{\nabla u}_2^2
 &=\frac N2\int_{\R^N} u^2\log u^2-\frac N2\norm{u}_2^2
   +\frac{N\mu}{p+1}\norm{u}_{p+1}^{p+1}.
 \label{eq:pohozaev}
\end{align}
Consequently,
\begin{equation}\label{eq:pohozaev-ground-identities}
 \norm{\nabla u}_2^2=NJ_\mu(u),\qquad
 J_\mu(u)=\frac12\norm{u}_2^2
 +\frac{\mu(p-1)}{2(p+1)}\norm{u}_{p+1}^{p+1}.
\end{equation}
\end{lemma}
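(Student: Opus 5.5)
The Nehari identity \eqref{eq:nehari-solution} comes directly from testing the equation with $u$. By Proposition~\ref{prop:regularity}, the distributional formulation extends to every test function $\varphi\in W$, so we may take $\varphi=u$ in \eqref{eq:action-derivative}, which is exactly $K_\mu(u)=0$. The Pohozaev identity \eqref{eq:pohozaev} is the substantive part. We will derive it by the standard cutoff argument with the multiplier $\eta_R\,(x\cdot\nabla u)$. This is legitimate because $u\in C^\infty(\R^N)$ and $u>0$ by Proposition~\ref{prop:regularity}, so all pointwise computations are classical.

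Write $F(s)=\frac12 s^2\log s^2-\frac12 s^2+\frac{\mu}{p+1}s^{p+1}$, so that $F'(s)=s\log s^2+\mu s^p$. Multiply $-\Delta u=F'(u)$ by $\eta_R\, x\cdot\nabla u$ and integrate over $B_{2R}$. Using the pointwise identities
\[
\Delta u\,(x\cdot\nabla u)=\operatorname{div}\Bigl(\nabla u\,(x\cdot\nabla u)-x\frac{\abs{\nabla u}^2}{2}\Bigr)+\frac{N-2}{2}\abs{\nabla u}^2,\qquad F'(u)\,x\cdot\nabla u=\operatorname{div}(xF(u))-NF(u),
\]
and integrating by parts against $\eta_R$, we obtain
\[
\frac{N-2}{2}\int\eta_R\abs{\nabla u}^2-N\int\eta_R F(u)=E_R,
\]
where $E_R$ collects the terms containing $\nabla\eta_R$. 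Since $\abs{x}\abs{\nabla\eta_R}\le C\,\mathbf 1_{R\le\abs x\le 2R}$, we get
\[
\abs{E_R}\le C\int_{R\le\abs x\le2R}\bigl(\abs{\nabla u}^2+\abs{F(u)}\bigr).
\]
We then let $R\to\infty$. The key point is that $\abs{F(u)}\in L^1$: this follows from $u\in W$, the equivalence \eqref{eq:W-log-equivalence} (integrability of $u^2\abs{\log u^2}$), and $u\in L^2\cap L^{p+1}$. Together with $\abs{\nabla u}^2\in L^1$, dominated convergence gives $E_R\to0$ and also lets the left-hand side converge to $\frac{N-2}{2}\norm{\nabla u}_2^2-N\int F(u)$. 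This is exactly \eqref{eq:pohozaev}.

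The only delicate point is this integrability of $F(u)$ and of the boundary-type terms. No decay rate of $u$ is needed, because the cutoff annulus terms are controlled purely by $L^1$ tails; smoothness of $u$ removes any issue with the logarithmic singularity in the pointwise identities. I would also note that $F$ is $C^1$ on $[0,\infty)$ with $F(0)=0$, so the chain rule for $F(u)$ is fine even where $u$ is small.

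For the consequences \eqref{eq:pohozaev-ground-identities}, we take linear combinations. Let $G=\norm{\nabla u}_2^2$, $H=\int u^2\log u^2$, $M=\norm{u}_2^2$ and $P=\norm{u}_{p+1}^{p+1}$. Substituting $H=G-\mu P$ from \eqref{eq:nehari-solution} into $J_\mu(u)=\frac12G+\frac12M-\frac12H-\frac{\mu}{p+1}P$ gives the second identity, exactly as in \eqref{eq:energy-on-nehari}. Next, \eqref{eq:pohozaev} reads $\frac{N-2}{2}G=\frac N2(H-M)+\frac{N\mu}{p+1}P$. Since $J_\mu(u)=\frac12G-\frac12(H-M)-\frac{\mu}{p+1}P$, we have
\[
NJ_\mu(u)=\frac N2G-\Bigl(\frac N2(H-M)+\frac{N\mu}{p+1}P\Bigr)=\frac N2G-\frac{N-2}{2}G=G,
\]
which is the first identity.
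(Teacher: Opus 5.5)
Your proposal is correct and follows the paper's proof essentially step for step: you test with $u$ for the Nehari identity, use the cutoff multiplier $\eta_R\,x\cdot\nabla u$ with the annulus bound on $E_R$ (which needs only $|\nabla u|^2, F_\mu(u)\in L^1$), and then finish with algebraic elimination. The only cosmetic difference is that you obtain $\|\nabla u\|_2^2=NJ_\mu(u)$ from the Pohozaev identity and the definition of $J_\mu$ alone, whereas the paper eliminates the logarithmic integral between the two identities; both routes are valid, including for $N=2$.
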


\begin{proof}
Set
\[
 F_\mu(s)=\tfrac12s^2\log s^2-\tfrac12s^2
                 +\tfrac{\mu}{p+1}|s|^{p+1},\qquad F_\mu(0)=0.
\]
The definition of $W$ and the subcritical Sobolev embedding imply
$F_\mu(u)\in L^1$. Proposition~\ref{prop:regularity} gives the local smoothness needed for compactly supported integration by parts.
The first identity follows by testing the weak formulation with $u$. For the second, multiply the equation by
$\eta_R\,x\cdot\nabla u$, with the cutoff defined in Section~\ref{sec:preliminaries}. Integration by parts gives
\[
 \frac{N-2}{2}\int_{\R^N}\eta_R|\nabla u|^2-N\int_{\R^N}\eta_RF_\mu(u)=E_R,
\]
where
\[
 E_R=\int_{\R^N}(x\cdot\nabla\eta_R)F_\mu(u)
      -\frac12\int_{\R^N}(x\cdot\nabla\eta_R)|\nabla u|^2
      +\int_{\R^N}(\nabla\eta_R\cdot\nabla u)(x\cdot\nabla u).
\]
Since $|x||\nabla\eta_R|\le C$ on the annulus supporting the cutoff gradient,
\begin{equation*}
 |E_R|\le C\int_{R<|x|<2R}
                  (|\nabla u|^2+|F_\mu(u)|)\longrightarrow0.
\end{equation*}
Letting \(R\to\infty\), using \(E_R\to0\) and dominated convergence in the two
main integrals, yields \eqref{eq:pohozaev}.
Eliminating the logarithmic integral from \eqref{eq:nehari-solution} and  \eqref{eq:pohozaev} yields
\[
\|\nabla u\|_2^2
 =\frac N2\|u\|_2^2
 +\frac{N\mu(p-1)}{2(p+1)}\|u\|_{p+1}^{p+1}
 =NJ_\mu(u),
\]
which proves \eqref{eq:pohozaev-ground-identities}.
For $N=2$ the Dirichlet term in \eqref{eq:pohozaev} vanishes, but
the same elimination remains valid and gives $\norm{\nabla u}_2^2=2J_\mu(u)$.
No decay estimate beyond the defining integrability of $W$ is used.
\end{proof}

\subsection{Strict monotonicity and symmetry}

The radial equation gives strict monotonicity of the rearranged minimizer. Equality in the P\'olya--Szeg\H{o} inequality then yields symmetry of every positive ground state.

\begin{proposition}[Strict decrease of nonincreasing radial solutions]
\label{prop:strict-decrease}
For every nonzero, nonnegative, radial, radially nonincreasing weak
solution $u\in W$ , there exists a smooth radial profile $U$ satisfying
$U'(r)<0$ for $r>0$ and $\lim_{r\to\infty}U(r)=0$.
In particular, this applies to the
minimizer in Proposition~\ref{prop:attainment}.
\end{proposition}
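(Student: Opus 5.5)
We would first upgrade the radial solution to a smooth radial profile and then run an ODE argument. By Proposition~\ref{prop:regularity}, $u\in C^\infty(\R^N)$ and $u>0$. Since $u$ is radial, we write $u(x)=U(\abs{x})$ with $U$ smooth on $[0,\infty)$ and $U'(0)=0$, and $U$ satisfies
\[
 -(r^{N-1}U')'=r^{N-1}f_\mu(U),\qquad f_\mu(s)=s\log s^2+\mu s^p .
\]
The continuous representative coincides with the a.e.\ nonincreasing one, so $U'\le0$ on $(0,\infty)$. The limit $U(r)\to0$ follows directly from Lemma~\ref{lem:radial-bound}, because $u\in H^1_{\rad}$; equivalently, a nonincreasing positive function with $u\in L^2$ must tend to $0$.

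For strict decrease, we argue by contradiction. Suppose $U'(r_0)=0$ for some $r_0>0$. Since $U'\le0$, the point $r_0$ is a local maximum of $U'$, so $U''(r_0)=0$ as well. The radial equation then gives $f_\mu(U(r_0))=0$, hence $U(r_0)=b_\mu$, the unique positive zero of $\log s^2+\mu s^{p-1}$. The constant $b_\mu$ is itself a solution of the ODE, and the ODE is regular near $r_0>0$ because $f_\mu$ is smooth near $b_\mu>0$. Uniqueness for the initial value problem with data $(U,U')(r_0)=(b_\mu,0)$ therefore forces $U\equiv b_\mu$ on a neighbourhood of $r_0$. A continuation argument extends this to all of $(0,\infty)$: the set where $(U,U')=(b_\mu,0)$ is closed, and it is open because the ODE is regular at every $r>0$. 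This contradicts $U(r)\to0$. Hence $U'(r)<0$ for every $r>0$.

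An alternative route avoids ODE uniqueness. Integrating the equation gives $r^{N-1}U'(r)=-\int_0^r t^{N-1}f_\mu(U(t))\,dt$. If $U'$ vanished on a set accumulating at $r_0$, one would still conclude via the uniqueness argument above, so the first route suffices.

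The statement about the minimizer of Proposition~\ref{prop:attainment} follows because, by Proposition~\ref{prop:natural-constraint}, that minimizer is a weak solution, and it is nonzero, nonnegative, radial and radially nonincreasing.

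The main obstacle I expect is the initial value uniqueness step. It is only valid where $f_\mu$ is locally Lipschitz, that is, away from $U=0$. This holds here because $U(r_0)=b_\mu>0$ and $U>0$ everywhere by Proposition~\ref{prop:regularity}, so the logarithmic singularity at zero is never encountered.
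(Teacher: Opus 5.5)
Your argument is correct and is essentially the paper's. Both proofs take a zero $r_0>0$ of $U'\le0$, observe that it is a maximum of $U'$ so that $U''(r_0)=0$, and invoke initial-value uniqueness (valid since $r_0>0$ and $U>0$) to force $U$ to be constant, contradicting decay. The only difference is which initial-value problem is used: you apply uniqueness to the nonlinear equation for $U$, comparing with the constant solution $b_\mu$ after reading off $f_\mu(U(r_0))=0$, while the paper differentiates the equation and applies uniqueness to the linear equation for $Z=U'$ with zero data. Your ``alternative route'' paragraph falls back on the same uniqueness argument, so it adds nothing and can be dropped.
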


\begin{proof}
By Proposition~\ref{prop:regularity}, $u$ is smooth and positive. 
Its radial profile \(U\) is positive and nonincreasing. Since a nonnegative,
nonincreasing radial profile belonging to \(L^2(\mathbb R^N)\) must satisfy
\[
U(r)\to0\qquad\text{as }r\to\infty,
\]
we have the asserted limit.
 The radial equation is
\begin{equation}\label{eq:radial-ode}
 -U''-\frac{N-1}{r}U'=U\log U^2+\mu U^p.
\end{equation}
Set $Z=U'\le0$. Differentiating gives
\[
 -Z''-\frac{N-1}{r}Z'+\frac{N-1}{r^2}Z
 =(\log U^2+2+\mu pU^{p-1})Z\qquad(r>0).
\]
If $Z(r_0)=0$ for some $r_0>0$, then $Z'(r_0)=0$ because $r_0$ is a local maximum of $Z$. Uniqueness for this linear initial-value problem, whose coefficients are continuous on $(0,\infty)$, gives $Z\equiv0$. This would make $U$ a positive constant, contradicting $u\in L^2(\R^N)$. Thus $U'(r)<0$ for every $r>0$.
\end{proof}

\begin{proposition}[Symmetry of nonnegative Nehari minimizers]
\label{prop:positive-symmetry}
Let $N\ge2$, $1<p<2^*-1$, and $\mu>0$. If $u\ge0$ belongs to
$\cN_\mu$ and $J_\mu(u)=c_\mu^{\cN}$, there exists $x_0\in\R^N$
and a radial profile $U$ such that
\begin{equation}\label{eq:ground-state-symmetry}
 u(x)=u^*(x-x_0)=U(\abs{x-x_0}),\qquad
 U'(r)<0\quad(r>0),\qquad \lim_{r\to\infty}U(r)=0.
\end{equation}
\end{proposition}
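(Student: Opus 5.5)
We compare $u$ with its Schwarz rearrangement $u^*$ and then invoke the equality case of the P\'olya--Szeg\H{o} inequality.

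\emph{Step 1: $u^*$ is also a minimizer.} Since $u\ge0$, $u^*$ is equimeasurable with $u$. Hence
\[
\norm{u^*}_2=\norm{u}_2,\qquad \norm{u^*}_{p+1}=\norm{u}_{p+1},\qquad \int (u^*)^2\log (u^*)^2=\int u^2\log u^2,
\]
and also $\norm{\nabla u^*}_2\le\norm{\nabla u}_2$. It follows that $K_\mu(u^*)\le K_\mu(u)=0$, so $t(u^*)\le1$ by Lemma~\ref{lem:nehari-projection}. Formula \eqref{eq:energy-on-nehari} then gives
\[
c_\mu^{\cN}\le J_\mu(t(u^*)u^*)=\tfrac{t^2}{2}\norm{u}_2^2+\tfrac{\mu(p-1)t^{p+1}}{2(p+1)}\norm{u}_{p+1}^{p+1}\le J_\mu(u)=c_\mu^{\cN},
\]
which forces $t(u^*)=1$. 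Therefore $K_\mu(u^*)=0$, and comparing with $K_\mu(u)=0$ yields $\norm{\nabla u^*}_2=\norm{\nabla u}_2$. So $u^*\in\cN_\mu$ is a minimizer.

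\emph{Step 2: regularity and strict decrease.} By Proposition~\ref{prop:natural-constraint}, both $u$ and $u^*$ are weak solutions. Proposition~\ref{prop:regularity} then shows that both are smooth and strictly positive. Since $u^*$ is nonnegative, radial and radially nonincreasing, Proposition~\ref{prop:strict-decrease} gives a profile $U$ with $U'(r)<0$ for all $r>0$ and $U(r)\to0$ as $r\to\infty$.

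\emph{Step 3: equality case.} This is the main step. We use the Brothers--Ziemer characterization of equality in P\'olya--Szeg\H{o}: if $u\ge0$ lies in $W^{1,2}$, vanishes at infinity in the sense that its level sets $\{u>t\}$ have finite measure for $t>0$, satisfies $\norm{\nabla u^*}_2=\norm{\nabla u}_2$, and satisfies
\[
\abs{\{\nabla u^*=0\}\cap\{0<u^*<\operatorname{ess\,sup}u\}}=0,
\]
then $u=u^*(\cdot-x_0)$ for some $x_0$. The finite-measure condition holds because $u\in L^2$. The zero-gradient condition holds because $\abs{\nabla u^*(x)}=\abs{U'(\abs x)}>0$ for $x\ne0$, so $\{\nabla u^*=0\}\subset\{0\}$, which has measure zero. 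This is precisely why strict decrease is needed, and verifying this critical-set hypothesis is the only delicate point.

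Applying the characterization, $u(x)=u^*(x-x_0)=U(\abs{x-x_0})$, which is \eqref{eq:ground-state-symmetry}. Because both $u$ and $u^*$ are continuous, the a.e.\ identity holds everywhere.
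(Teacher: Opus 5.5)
Your proof is correct and follows the paper's argument essentially step by step. You use rearrangement and the Nehari projection to show $u^*$ is a minimizer with $\norm{\nabla u^*}_2=\norm{\nabla u}_2$, then regularity and strict radial decrease of $u^*$ to verify the Brothers--Ziemer critical-set condition, and finally the equality case together with continuity; the one point worth stating explicitly is $u^*\in W$, which holds because equimeasurability preserves the finite integrals of both $(u^2\log u^2)^+$ and $(u^2\log u^2)^-$, as the paper notes.
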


\begin{proof}
Set $w=u^*$. Propositions~\ref{prop:natural-constraint} and
\ref{prop:regularity} first show that $u$ is smooth and positive. Equimeasurability preserves the $L^2$ and $L^{p+1}$
norms and the two nonnegative integrals of
$(u^2\log u^2)^+$ and $(u^2\log u^2)^-$, which are finite by
\eqref{eq:W-log-equivalence}. The P\'olya--Szeg\H{o} inequality
\cite[Chapter~3]{LiebLoss2001} gives $w\in H^1$ and
$\norm{\nabla w}_2\le\norm{\nabla u}_2$. Hence $w\in W$,
$\int w^2\log w^2=\int u^2\log u^2$, and
\begin{equation*}
 K_\mu(w)=\norm{\nabla w}_2^2-\norm{\nabla u}_2^2\le0.
\end{equation*}
By Lemma~\ref{lem:nehari-projection}, there is $t\in(0,1]$ such that
$tw\in\cN_\mu$. Applying \eqref{eq:energy-on-nehari} to $tw$ and $u$
yields
\begin{equation*}
 c_\mu^{\cN}\le J_\mu(tw)
 =\frac{t^2}{2}\norm{u}_2^2
  +\frac{\mu(p-1)t^{p+1}}{2(p+1)}\norm{u}_{p+1}^{p+1}
 \le J_\mu(u)=c_\mu^{\cN}.
\end{equation*}
Since $\norm{u}_2>0$, equality forces $t=1$. Consequently,
$w\in\cN_\mu$, $J_\mu(w)=c_\mu^{\cN}$, and
\begin{equation}\label{eq:polya-szego-equality}
 \norm{\nabla w}_2=\norm{\nabla u}_2.
\end{equation}

Propositions~\ref{prop:natural-constraint} and \ref{prop:regularity}
show that $w$ is a positive, smooth weak solution. Its radial profile is
nonincreasing, so Proposition~\ref{prop:strict-decrease} gives
$w(x)=U(\abs{x})$, $U'(r)<0$ for $r>0$, and $U(r)\to0$.
In particular,
\begin{equation}\label{eq:rearrangement-critical-set}
 \left|\left\{x\in\R^N:
  0<w(x)<\operatorname*{ess\,sup}_{\R^N}w,
  \nabla w(x)=0\right\}\right|=0.
\end{equation}
We apply the equality case of the P\'olya--Szeg\H{o} inequality
of Brothers and Ziemer \cite{BrothersZiemer1988}; see also \cite{BurchardFerone2015}. 
We use the case in which both the gradient and Lebesgue exponents equal $2$.

Both \(u\) and \(w\) are nonnegative functions in \(H^1(\mathbb R^N)\).
Since \(u,w\in L^2(\mathbb R^N)\), all their positive superlevel sets have finite measure.
Moreover, \eqref{eq:polya-szego-equality} gives equality in the Pólya--Szegő inequality, while
\eqref{eq:rearrangement-critical-set} verifies the required critical-set condition.
Therefore, the equality case of Brothers and Ziemer yields
\[
u=w(\,\cdot-x_0\,)\quad\text{a.e. in }\mathbb R^N
\]
for some \(x_0\in\mathbb R^N\). 
Proposition~\ref{prop:regularity} gives continuity of both functions,
so this identity holds everywhere and proves
\eqref{eq:ground-state-symmetry}.
\end{proof}

\begin{proof}[Proof of Theorem~\ref{thm:existence}]
Lemma~\ref{lem:positive-level} gives $c_\mu^{\cN}>0$.
Propositions~\ref{prop:attainment} and \ref{prop:natural-constraint}
give a nonnegative radial minimizer $u_\mu$ with $J_\mu'(u_\mu)=0$.
Propositions~\ref{prop:regularity} and \ref{prop:strict-decrease}
give its positivity, smoothness, and strict radial monotonicity.
Every nontrivial critical point lies in $\cN_\mu$, so
\[
 c_\mu^{\cN}
 \le\inf_{v\in\mathscr S_\mu}J_\mu(v)
 \le\inf_{v\in\mathscr S_\mu^{\rad}}J_\mu(v)
 \le J_\mu(u_\mu)=c_\mu^{\cN}.
\]
Both infima are attained by $u_\mu$, proving
\eqref{eq:level-equality-main}.

Now let \(u\) be a positive ground state. Since \(J_\mu'(u)=0\) and \(u\neq0\),
we have \(u\in\mathcal N_\mu\). By \eqref{eq:level-equality-main},
\[
J_\mu(u)=c_\mu^\cN,
\]
so \(u\) is a nonnegative Nehari minimizer. Proposition~\ref{prop:positive-symmetry} therefore yields,
after a translation, radial symmetry and strict radial decrease, while Lemma~\ref{lem:radial-bound}
gives the estimate in \eqref{eq:strict-radial-decay-main}.
\end{proof}

\section{Uniform estimates and convergence to the Gaussian}
\label{sec:limit}

Section~\ref{sec:existence} gives a centered radial ground state for
each $\mu>0$, although uniqueness has not yet been established. 
We prove Theorem~\ref{thm:limit} by deriving estimates uniform over
\(\cG_\mu^{\mathrm{rad}}\) and showing that every sequence of centered radial
ground states with \(\mu\downarrow0\) converges to \(U_0\).
We first establish compactness in the energy space, then local smooth convergence and uniform estimates at infinity. These estimates will control the
coefficients of the difference and linearized equations in
Section~\ref{sec:uniqueness-nondegeneracy}.

\subsection{Energy bounds and compactness}

Projection of the Gaussian onto $\cN_\mu$ gives an upper bound for the action. The Pohozaev identities then control the $H^1$ norm, and the Nehari identity controls the Orlicz norm.

\begin{proposition}[Energy and logarithmic-energy bounds]\label{prop:uniform-energy}

There exist \(\mu_1>0\) and \(C=C(N,p,\mu_1)>0\) such that, for every
\(0<\mu\le\mu_1\),
\[
c_\mu\le C,
\]
and, for every \(u\in \cG_\mu^{\mathrm{rad}}\),
\[
\|u\|_{H^1}+\|u\|_V\le C.
\]
Moreover,
\begin{equation}\label{eq:level-limsup}
 \limsup_{\mu\downarrow0}c_\mu\le c_0.
\end{equation}
\end{proposition}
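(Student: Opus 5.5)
The plan is to project the Gaussian $U_0$ onto $\cN_\mu$ to get an upper bound for $c_\mu$. Then the Pohozaev identities \eqref{eq:pohozaev-ground-identities} turn that bound into $H^1$ control, and the Nehari identity, written as in \eqref{eq:A-modular-bound}, controls the Orlicz modular.

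\textbf{Step 1 (upper bound and \eqref{eq:level-limsup}).} Let $t_\mu=t(U_0)$ be the Nehari projection from Lemma~\ref{lem:nehari-projection}. By \eqref{eq:fiber-K}, $t_\mu$ is the unique zero of
\[
F(t)=G-H-M\log t^2-\mu t^{p-1}P,
\]
where $G,H,M,P$ are computed for $U_0$. Since $J_0'(U_0)=0$, Lemma~\ref{lem:test-by-solution} gives $G=H$, so $t_\mu$ solves $M\log t^2=-\mu t^{p-1}P$. This forces $t_\mu<1$. Because $F$ is strictly decreasing in $t$ and, for each fixed $t<1$, $F(t)\to -M\log t^2>0$ as $\mu\downarrow0$, we get $t_\mu\to1$. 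Then \eqref{eq:energy-on-nehari} and \eqref{eq:gaussian-energy} give
\[
c_\mu\le J_\mu(t_\mu U_0)=\frac{t_\mu^2}{2}\norm{U_0}_2^2+\frac{\mu(p-1)t_\mu^{p+1}}{2(p+1)}\norm{U_0}_{p+1}^{p+1}\le c_0+C\mu .
\]
This proves $c_\mu\le C$ for $\mu\le\mu_1$ (take, e.g., $\mu_1=1$) and proves \eqref{eq:level-limsup}.

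\textbf{Step 2 ($H^1$ bound).} Let $u\in\cG_\mu^{\rad}$. Then $J_\mu(u)=c_\mu$, and by Lemma~\ref{lem:pohozaev}:
\begin{itemize}
\item $\norm{\nabla u}_2^2=Nc_\mu\le C$;
\item $\norm{u}_2^2\le 2c_\mu\le C$;
\item $\mu\norm{u}_{p+1}^{p+1}\le C$.
\end{itemize}
These hold uniformly in $u$ and in $\mu\le\mu_1$, so the $H^1$ bound needs no log-Sobolev argument here.

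\textbf{Step 3 (Orlicz bound).} Using $u^2\log u^2=B(u)-A(u)$ and the Nehari identity \eqref{eq:nehari-solution},
\[
\int A(u)=\int B(u)-\norm{\nabla u}_2^2+\mu\norm{u}_{p+1}^{p+1}\le C_r\norm{u}_r^r+C.
\]
Here we fix $r\in(2,2^*)$ and use \eqref{eq:B-mean-value} with $t=0$; $B(0)=0$ gives $|B(s)|\le C_r|s|^r$. Sobolev embedding then bounds $\norm{u}_r^r$ by $C\norm{u}_{H^1}^r\le C$. Finally, \eqref{eq:modular-norm} yields $\norm{u}_V\le\max\{\rho_A(u),\rho_A(u)^{1/2}\}\le C$.

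No step is a real obstacle. The only delicate point is Step 1: one must check that the projection factor is bounded uniformly in $\mu$ and tends to $1$, and this follows from the monotonicity of the fibering map together with $G=H$ for the Gaussian.
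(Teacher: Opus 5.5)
Your proposal is correct and follows the paper's proof essentially step for step. You project $U_0$ onto $\cN_\mu$, using $K_0(U_0)=0$ to get $t_\mu\le 1$; you use the Pohozaev identities \eqref{eq:pohozaev-ground-identities} for the $H^1$ bound; and you combine the Nehari identity with \eqref{eq:B-mean-value} and the modular--norm estimate \eqref{eq:modular-norm} for the $V$ bound. The only difference is cosmetic: you evaluate $J_\mu(t_\mu U_0)$ via \eqref{eq:energy-on-nehari} and $t_\mu<1$, giving the explicit bound $c_\mu\le c_0+C\mu$ for every $\mu>0$, whereas the paper writes $J_\mu(t_\mu U_0)=J_0(t_\mu U_0)-\frac{\mu t_\mu^{p+1}}{p+1}\norm{U_0}_{p+1}^{p+1}$ and uses $t_\mu\to1$. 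Your version is slightly sharper, does not actually need $t_\mu\to1$, and makes the choice of $\mu_1$ immaterial.
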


\begin{proof}
Put $M_0=\|U_0\|_2^2$ and $P_0=\|U_0\|_{p+1}^{p+1}$. Let $t_\mu>0$ be
 the unique scaling factor that projects $U_0$ onto the Nehari constraint.  
Since $K_0(U_0)=0$, formula \eqref{eq:fiber-K} gives
\[
    0
    =
    \frac{K_\mu(t_\mu U_0)}{t_\mu^2}
    =
    -M_0\log t_\mu^2
    -
    \mu t_\mu^{p-1}P_0.
\]
Hence $0<t_\mu\le1$, and therefore
\[
    0
    \le
    -\log t_\mu^2
    =
    \frac{\mu P_0}{M_0}t_\mu^{p-1}
    \le
    \frac{\mu P_0}{M_0}.
\]
It follows that
\[
    t_\mu\to1
    \qquad\text{as }\mu\downarrow0.
\]

Since $t_\mu U_0\in\mathcal N_\mu$ and
$c_\mu=\inf_{\mathcal N_\mu}J_\mu$, we have
\[
    c_\mu
    \le
    J_\mu(t_\mu U_0).
\]
Moreover,
\[
    J_\mu(t_\mu U_0)
    =
    J_0(t_\mu U_0)
    -
    \frac{\mu t_\mu^{p+1}}{p+1}
    \|U_0\|_{p+1}^{p+1}
    \longrightarrow
    J_0(U_0)
    =
    c_0.
\]
Consequently,
\[
    \limsup_{\mu\downarrow0}c_\mu\le c_0.
\]
After decreasing $\mu_1>0$ if necessary, this also yields
\[
    c_\mu\le C,
    \qquad
    0<\mu\le\mu_1.
\]

Now let $u\in \cG_\mu^{\rm rad}$ with $0<\mu\le\mu_1$.
Since $J_\mu(u)=c_\mu$, Lemma~\ref{lem:pohozaev}
gives
\begin{equation}
    \|\nabla u\|_2^2
    =
    Nc_\mu,
    \qquad
    c_\mu
    =
    \frac12\|u\|_2^2
    +
    \frac{\mu(p-1)}{2(p+1)}
    \|u\|_{p+1}^{p+1}.
    \label{eq:ground-identities-repeat}
\end{equation}
Hence
\[
    \|\nabla u\|_2^2
    =
    Nc_\mu
    \le C,
\]
while the second identity in
\eqref{eq:ground-identities-repeat} gives
\[
    \|u\|_2^2
    \le
    2c_\mu
    \le C.
\]
Thus
\[
    \|u\|_{H^1}\le C
\]
uniformly for $0<\mu\le\mu_1$ and
$u\in \cG_\mu^{\rm rad}$.

It remains to control the Orlicz norm.
Since $u\in\mathcal N_\mu$ and
\[
    u^2\log u^2=B(u)-A(|u|),
\]
the Nehari identity yields
\[
\begin{aligned}
    \int_{\mathbb R^N}A(|u|)\,dx
    &=
    \int_{\mathbb R^N}B(u)\,dx
    -
    \|\nabla u\|_2^2
    +
    \mu\|u\|_{p+1}^{p+1} \\
    &\le
    C\|u\|_r^r
    +
    \mu\|u\|_{p+1}^{p+1},
\end{aligned}
\]
for any fixed $r\in(2,2^*)$.
By the uniform $H^1$-bound, the Sobolev embedding, and
$\mu\le\mu_1$, the right-hand side is uniformly bounded.
Therefore
\[
    \rho_A(u)
    =
    \int_{\mathbb R^N}A(|u|)\,dx
    \le C.
\]
The modular--norm estimate in
Proposition~\ref{prop:orlicz-facts} then gives
\[
    \|u\|_V\le C.
\]
Consequently,
\[
    \|u\|_{H^1}+\|u\|_V\le C,
\]
which completes the proof.
\end{proof}

\begin{lemma}[Uniform nonvanishing]\label{lem:uniform-nonvanishing}

After decreasing the constant \(\mu_1\) from Proposition~\ref{prop:uniform-energy} if necessary,
there exists \(d=d(N,p,\mu_1)>0\) such that
\begin{equation*}
 \norm{\nabla u}_2^2\ge d
\end{equation*}
for every \(0<\mu\le\mu_1\) and every
\(u\in \cG_\mu^{\mathrm{rad}}\).

\end{lemma}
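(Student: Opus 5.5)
The plan is to argue directly, via the logarithmic Sobolev inequality, that the Dirichlet energy of a centered radial ground state cannot be small. The key point is that the Pohozaev identities force $\norm{u}_2^2$ to be comparable to $\norm{\nabla u}_2^2$. Fix $u\in\cG_\mu^{\rad}$ with $0<\mu\le\mu_1$, and write $G=\norm{\nabla u}_2^2$, $M=\norm{u}_2^2$, $P=\norm{u}_{p+1}^{p+1}$. By \eqref{eq:pohozaev-ground-identities}, $G=NJ_\mu(u)=Nc_\mu$ and $M\le 2c_\mu$. Hence
\[
 M\le \tfrac{2}{N}G .
\]
So smallness of $G$ forces smallness of $M$. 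This is the one place where being a ground state, rather than a general Nehari element, is used. It matters because the separation constant $\rho$ in Lemma~\ref{lem:positive-level} depends on $\mu$ and cannot be used directly.

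Next, I combine the Nehari identity \eqref{eq:nehari-solution} with the logarithmic Sobolev inequality \eqref{eq:log-sobolev}, for a fixed $a$ with $\eta=\frac{a^2}{\pi}<1$, exactly as in \eqref{eq:positive-level-calculation}. This gives
\[
 (1-\eta)G\le M\log M-N(1+\log a)M+\mu P .
\]
For the power term I use the Gagliardo--Nirenberg inequality. Set $\theta(p+1)=\frac{N(p-1)}{2}$. Then
\[
 P\le C\,G^{\frac{N(p-1)}{4}}M^{\frac{p+1}{2}-\frac{N(p-1)}{4}}.
\]
The exponent of $M$ is nonnegative precisely because $p(N-2)<N+2$, that is, $p<2^*-1$. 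For $N=2$ every positive exponent is allowed. Inserting $M\le\frac2N G$ and $\mu\le\mu_1$ gives
\[
 \mu P\le C(N,p,\mu_1)\,G^{\frac{p+1}{2}} .
\]

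Now suppose $G\le g_0$, where $g_0>0$ is chosen so that $M\le\frac2N g_0$ is below $e^{N(1+\log a)}$. Then $M\log M-N(1+\log a)M\le0$, because $M\log M$ dominates the linear term for small $M$. The inequality reduces to $(1-\eta)G\le C G^{\frac{p+1}{2}}$. Since $G>0$ and $\frac{p+1}{2}>1$, this yields $G\ge\bigl(\frac{1-\eta}{C}\bigr)^{2/(p-1)}$. Therefore
\[
 G\ge d:=\min\Bigl\{g_0,\bigl(\tfrac{1-\eta}{C}\bigr)^{2/(p-1)}\Bigr\},
\]
which depends only on $N$, $p$ and $\mu_1$.

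The main obstacle is ensuring that every constant is independent of $\mu$ and of the particular ground state. I handle this by never invoking the $\mu$-dependent $\rho$ from Lemma~\ref{lem:positive-level}. Instead I use only the exact identities of Lemma~\ref{lem:pohozaev} and inequalities with universal constants. I expect no need to decrease $\mu_1$, though doing so is harmless. A contradiction version also works: take a sequence with $G_n\to0$; then $M_n\to0$, and the same inequality gives a contradiction once $G_n$ is small.
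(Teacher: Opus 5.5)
Your argument is correct, and its skeleton is the paper's. The Pohozaev identities give $\|u\|_2^2\le\frac2N\|\nabla u\|_2^2$, so the whole $H^1$ norm is controlled by $G$, and the Nehari identity then bounds $G$ by superlinear powers of itself.

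The difference lies in how you treat the logarithmic term. The paper discards the negative part of $u^2\log u^2$ and uses the pointwise bound $(s^2\log s^2)^+\le C_r|s|^r$ with Sobolev. This gives $G\le CG^{r/2}+CG^{(p+1)/2}$ directly, with no case distinction. You instead keep the whole logarithmic integral through the logarithmic Sobolev inequality. That produces the absorbable term $\eta G$, but also $M\log M-N(1+\log a)M$. You then need the extra smallness condition $M\le e^{N(1+\log a)}$, hence the threshold $g_0$, to discard it. This is valid, just more roundabout.

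Likewise, Gagliardo--Nirenberg is more than you need. Once $M\le\frac2NG$, the plain Sobolev bound $P\le C\|u\|_{H^1}^{p+1}\le CG^{\frac{p+1}{2}}$ suffices.

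Two side remarks are inaccurate, though harmless. First, the identities you use hold for every positive weak solution in $W$ by Lemma~\ref{lem:pohozaev}, so it is the equation that matters, not minimality. Second, the radius $\rho$ of Lemma~\ref{lem:positive-level} is in fact uniform for $0<\mu\le\mu_1$, since its proof uses $\mu$ only through an upper bound. Combining that with $\|u\|_{H^1}^2\le(1+\frac2N)G$ would prove the lemma in one line.
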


\begin{proof}
Put $a=\norm{\nabla u}_2^2$.  From
\eqref{eq:ground-identities-repeat},
$\norm{u}_2^2\le \frac{2a}{N}$; hence
$\norm{u}_{H^1}^2\le(1+\frac 2 N)a$.  Choose
$r\in(2,2^*)$.  Discarding the negative part of the logarithm in the
Nehari identity and using \eqref{eq:positive-log-growth} gives
\[
 a\le C\norm{u}_r^r+\mu_1\norm{u}_{p+1}^{p+1}
 \le C a^{\frac r 2}+C a^{\frac{p+1}{2}}.
\]
Since \(a>0\), \(r>2\) and \(p>1\), division by \(a\) gives
\[
1\le C a^{\frac{r-2}{2}}+C a^{\frac{p-1}{2}}\rightarrow 0\quad (a\rightarrow0).
\] Hence there exists \(d>0\), independent of
\(0<\mu\le\mu_1\) and \(u\in \cG_\mu^{\mathrm{rad}}\), such that
\(a=\|\nabla u\|_2^2\ge d\).
\end{proof}

\begin{proposition}[Compactness and identification of the limit]
\label{prop:every-selection}
If $\mu_n\downarrow0$ and $u_n\in\cG_{\mu_n}^{\rad}$ are chosen
arbitrarily, then we have
\begin{equation}\label{eq:sequence-W-convergence}
 c_{\mu_n}\to c_0,\qquad u_n\to U_0\quad\text{strongly in }W.
\end{equation}
\end{proposition}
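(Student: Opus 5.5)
We argue by compactness along the given sequence and identify the limit through the classification of positive decaying solutions of the logarithmic equation. It suffices to show that every subsequence has a further subsequence along which \eqref{eq:sequence-W-convergence} holds.

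\emph{Step 1: weak limit and limiting equation.} By Proposition~\ref{prop:uniform-energy}, $(u_n)$ is bounded in $W_{\rad}$. Reflexivity (Proposition~\ref{prop:orlicz-facts}) gives a subsequence with $u_n\rightharpoonup u$ in $W$. Proposition~\ref{prop:radial-compactness} gives $u_n\to u$ in $L^q$ for $2\le q<2^*$, and we may also take $u_n\to u$ a.e. The limit $u$ is nonnegative, radial and nonincreasing. For $\varphi\in C_c^\infty$, we pass to the limit in $J_{\mu_n}'(u_n)[\varphi]=0$:
\begin{itemize}
\item the gradient term converges by weak convergence;
\item $\mu_n\int u_n^{p}\varphi\to0$ by the $L^{p+1}$ bound;
\item $\int u_n\log u_n^2\,\varphi\to\int u\log u^2\,\varphi$ by Vitali, using \eqref{eq:log-vitali-growth} together with local $L^2$ and $L^r$ convergence on $\operatorname{supp}\varphi$.
\end{itemize}
Hence $u$ is a distributional solution of $-\Delta u=u\log u^2$ in $W$.

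\emph{Step 2: $u\neq0$.} This is the main obstacle. Lemma~\ref{lem:uniform-nonvanishing} gives $\norm{\nabla u_n}_2^2\ge d$, and $\norm{\nabla u_n}_2^2=Nc_{\mu_n}$ gives $c_{\mu_n}\ge d/N$. From \eqref{eq:ground-identities-repeat} we get $\norm{u_n}_2^2=2c_{\mu_n}-\frac{\mu_n(p-1)}{p+1}\norm{u_n}_{p+1}^{p+1}$. The power term tends to zero since $\mu_n\to0$ and $\norm{u_n}_{p+1}$ is bounded. Strong $L^2$ convergence then gives $\norm{u}_2^2=\lim 2c_{\mu_n}\ge 2d/N>0$.

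Since $u\neq0$, Proposition~\ref{prop:regularity} with $\mu=0$ makes $u$ smooth and positive. It is radial and decays to zero by Lemma~\ref{lem:radial-bound}. The classification of positive decaying solutions \cite{dAveniaMontefuscoSquassina2014}, combined with the $N=2$ reduction noted in the introduction, gives $u=U_0(\cdot-x_0)$. Radial symmetry forces $x_0=0$, so $u=U_0$.

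\emph{Step 3: strong convergence and the level.} Strong $L^2$ convergence gives
\[
c_{\mu_n}=\tfrac12\norm{u_n}_2^2+o(1)\to\tfrac12\norm{U_0}_2^2=c_0
\]
by \eqref{eq:gaussian-energy}. This is consistent with \eqref{eq:level-limsup}. Then $\norm{\nabla u_n}_2^2=Nc_{\mu_n}\to Nc_0=\norm{\nabla U_0}_2^2$, the last equality by Lemma~\ref{lem:pohozaev} with $\mu=0$. Combined with weak convergence, this yields $u_n\to U_0$ in $H^1$.

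For the Orlicz part we use the Nehari identity
\[
\int A(u_n)=\int B(u_n)-\norm{\nabla u_n}_2^2+\mu_n\norm{u_n}_{p+1}^{p+1}.
\]
By Proposition~\ref{prop:nonlinear-calculus}(ii), $\int B(u_n)\to\int B(U_0)$, and the power term vanishes. The right side therefore converges to $\int B(U_0)-\norm{\nabla U_0}_2^2=\int A(U_0)$, where the last equality is Lemma~\ref{lem:test-by-solution}. Thus $\rho_A(u_n)\to\rho_A(U_0)$ and $u_n\to U_0$ a.e., so Proposition~\ref{prop:orlicz-facts}(iii) gives $u_n\to U_0$ in $V$, hence in $W$.

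Since the limit is unique, the whole sequence converges.
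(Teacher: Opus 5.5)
Your proof is correct and takes essentially the same route as the paper. Both arguments use radial compactness for the weak limit, then nonvanishing from Lemma~\ref{lem:uniform-nonvanishing} together with the identities \eqref{eq:ground-identities-repeat}, a Vitali passage to the limit in the equation, and the d'Avenia--Montefusco--Squassina classification (with the $N=2$ product reduction) to get $u=U_0$; both then conclude by norm convergence of gradients in $H^1$ and modular convergence through the Nehari identity in $V$. The only differences are cosmetic: you pass to the limit in the equation before proving $u\neq0$, and you read off $c_{\mu_n}\to c_0$ from $\tfrac12\norm{U_0}_2^2=c_0$ instead of first deriving $J_0(u)=\tfrac12\norm{u}_2^2$.
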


\begin{proof}
We first prove convergence along a subsequence and then show that the
same limits hold for the entire sequence. Discarding finitely many terms,
we may assume that $0<\mu_n\le\mu_1$, where $\mu_1$ is chosen as in
Proposition~\ref{prop:uniform-energy} and
Lemma~\ref{lem:uniform-nonvanishing}.

\smallskip
\noindent\emph{Step 1: Compactness and nonvanishing of the weak limit.}
Proposition~\ref{prop:uniform-energy} gives a constant $M>0$ such that
\[
 \|u_n\|_{H^1}+\|u_n\|_V\le M\qquad\text{for every }n.
\]
By Proposition~\ref{prop:orlicz-facts}{\rm(ii)}, $W$ is reflexive.
For each $Q\in O(N)$, composition with $Q$ is a continuous linear
isometry on $W$. Thus $W_{\rad}$, the intersection of the kernels of
the maps $v\mapsto v\circ Q-v$, is a closed linear subspace of $W$
and is also reflexive. We may therefore extract a subsequence, still
denoted by $(u_n)$, and find $u\in W_{\rad}$ such that
\[
 u_n\rightharpoonup u\quad\text{in }W_{\rad}.
\]
The continuous inclusion $W_{\rad}\hookrightarrow H^1(\R^N)$ also gives
weak convergence in $H^1(\R^N)$. Proposition~\ref{prop:radial-compactness}
now yields
\begin{equation}\label{eq:weak-W-limit}
 u_n\to u\quad\text{strongly in }L^2(\R^N)
 \text{ and in }L^r(\R^N)
 \quad\text{for every }2<r<2^*.
\end{equation}
After a further extraction we also have $u_n(x)\to u(x)$ for almost
every $x\in\R^N$. Indeed, strong $L^2$ convergence allows us to choose
indices $n_j$ with $\|u_{n_j}-u\|_2^2\le2^{-j}$; Tonelli's theorem then
gives $\sum_j|u_{n_j}(x)-u(x)|^2<\infty$ almost everywhere.
We relabel this subsequence. Since every $u_n$ is positive,
$u\ge0$ almost everywhere.

The assumption $1<p<2^*-1$ gives $2<p+1<2^*$, so the Sobolev embedding
and the uniform $H^1$ bound imply $\|u_n\|_{p+1}\le C$. Consequently,
\begin{equation}\label{eq:power-vanishing}
 0\le\mu_n\|u_n\|_{p+1}^{p+1}\le C\mu_n\longrightarrow0.
\end{equation}
Let $d>0$ be the constant in Lemma~\ref{lem:uniform-nonvanishing}.
The two identities in \eqref{eq:ground-identities-repeat} give
\[
 c_{\mu_n}=\frac1N\|\nabla u_n\|_2^2\ge\frac dN,
 \qquad
 \|u_n\|_2^2
 =2c_{\mu_n}-\frac{\mu_n(p-1)}{p+1}\|u_n\|_{p+1}^{p+1}.
\]
Taking the lower limit in the second identity, using
\eqref{eq:power-vanishing} and strong $L^2$ convergence, we obtain
\[
 \|u\|_2^2=\lim_{n\to\infty}\|u_n\|_2^2
 \ge\frac{2d}{N}>0.
\]
In particular, the weak limit is nonzero.

\smallskip
\noindent\emph{Step 2: Passage to the limit in the equation.}
Define $h(s)=s\log s^2$ for $s\ne0$ and $h(0)=0$. This function is
continuous on $\R$, since $s\log s^2\to0$ as $s\to0$.
Fix $\delta\in(0,1)$ and $r\in(2,2^*)$. Choose a finite $q>1$ such that
\[
 q(1-\delta)<2,\qquad q(r-1)<2^*.
\]
Such a choice is possible by taking $q$ sufficiently close to $1$;
when $N=2$, only the first inequality restricts this choice.
Set $t=\max\{2,q(r-1)\}$. Then $2\le t<2^*$, with $t<\infty$
when $N=2$, and $q(1-\delta)\le t$.
For any fixed measurable set $E\subset\R^N$ of finite measure,
\eqref{eq:log-vitali-growth} and the inequality
$a^\gamma\le1+a^t$ for $a\ge0$ and $0<\gamma\le t$ imply
\[
 \begin{aligned}
 \int_E|h(u_n)|^q\,dx
 &\le C\int_E\bigl(|u_n|^{q(1-\delta)}
                         +|u_n|^{q(r-1)}\bigr)\,dx\\
 &\le C\bigl(|E|+\|u_n\|_t^t\bigr)\le C_E.
 \end{aligned}
\]
The last bound is uniform in $n$ by the Sobolev embedding and the
uniform $H^1$ bound. For every measurable $F\subset E$, H\"older's
inequality therefore gives
\[
 \sup_n\int_F|h(u_n)|\,dx
 \le C_E^{\frac 1 q}|F|^{1-\frac 1 q}\longrightarrow0
 \qquad\text{as }|F|\to0.
\]
Thus $(h(u_n))$ is uniformly integrable on $E$. Continuity of $h$ and
almost-everywhere convergence of $u_n$ give $h(u_n)\to h(u)$ almost
everywhere. Fatou's lemma also gives $h(u)\in L^q(E)$. Since $|E|<\infty$,
Vitali's convergence theorem applies and yields
\[
 h(u_n)\to h(u)\quad\text{in }L^1(E).
\]
In particular, this convergence holds on every compact subset of $\R^N$.

The power term converges to zero in a global Lebesgue space:
\[
 \|\mu_n u_n^p\|_{L^{\frac{p+1}{p}}(\R^N)}
 =\mu_n\|u_n\|_{p+1}^p\le C\mu_n\longrightarrow0.
\]
For $\varphi\in C_c^\infty(\R^N)$, the equation satisfied by $u_n$ is
\[
 \int_{\R^N}\nabla u_n\cdot\nabla\varphi\,dx
 =\int_{\R^N}h(u_n)\varphi\,dx
   +\mu_n\int_{\R^N}u_n^p\varphi\,dx.
\]
Weak $H^1$ convergence gives convergence of the left-hand side to
$\int_{\R^N}\nabla u\cdot\nabla\varphi\,dx$. If
$K=\operatorname{supp}\varphi$, the logarithmic term satisfies
\[
 \left|\int_{\R^N}\bigl(h(u_n)-h(u)\bigr)\varphi\,dx\right|
 \le\|\varphi\|_\infty\|h(u_n)-h(u)\|_{L^1(K)}\longrightarrow0,
\]
and H\"older's inequality with conjugate exponents $\frac{p+1}{p}$ and $p+1$
gives
\[
 \left|\mu_n\int_{\R^N}u_n^p\varphi\,dx\right|
 \le\mu_n\|u_n\|_{p+1}^p\|\varphi\|_{p+1}\longrightarrow0.
\]
Hence
\[
 \int_{\R^N}\nabla u\cdot\nabla\varphi\,dx
 =\int_{\R^N}u\log u^2\,\varphi\,dx
 \qquad\text{for every }\varphi\in C_c^\infty(\R^N),
\]
which is precisely $-\Delta u=u\log u^2$ in $\mathcal D'(\R^N)$.

\smallskip
\noindent\emph{Step 3: Positivity and the limiting energy identity.}
We have proved that $u\in W$ is a nonzero, nonnegative distributional
solution with parameter $\mu=0$. All the hypotheses of
Proposition~\ref{prop:regularity} are therefore satisfied, so
$u\in C^\infty(\R^N)$ and $u>0$ everywhere. The equation then holds
pointwise, since both sides are continuous.

To justify testing this equation with $u$, recall from
Proposition~\ref{prop:nonlinear-calculus}{\rm(iii)} that
$J_0'(u)$ is a continuous linear functional on $W$, with the formula
\eqref{eq:action-derivative}. The distributional identity says that
$J_0'(u)[\varphi]=0$ for every $\varphi\in C_c^\infty(\R^N)$.
For any $v\in W$, Proposition~\ref{prop:orlicz-facts}{\rm(iv)} supplies
$\varphi_j\in C_c^\infty(\R^N)$ with $\varphi_j\to v$ in $W$.
Continuity gives
$J_0'(u)[v]=\lim_jJ_0'(u)[\varphi_j]=0$.
Taking $v=u$ recovers the identity in
Lemma~\ref{lem:test-by-solution}, namely
\[
 \|\nabla u\|_2^2=\int_{\R^N}u^2\log u^2\,dx.
\]
This integral is absolutely convergent by $u\in W$ and
\eqref{eq:W-log-equivalence}. Substitution into the definition
\eqref{eq:action-intro} gives
\[
 J_0(u)
 =\frac12\|\nabla u\|_2^2+\frac12\|u\|_2^2
   -\frac12\int_{\R^N}u^2\log u^2\,dx
 =\frac12\|u\|_2^2.
\]
On the other hand, the second identity in
\eqref{eq:ground-identities-repeat} gives
\[
 c_{\mu_n}
 =\frac12\|u_n\|_2^2
  +\frac{\mu_n(p-1)}{2(p+1)}\|u_n\|_{p+1}^{p+1}
 \longrightarrow\frac12\|u\|_2^2,
\]
where we used \eqref{eq:weak-W-limit} and \eqref{eq:power-vanishing}.
Thus the energy levels converge along the extracted subsequence, and
\begin{equation}\label{eq:limit-energy-comparison}
 J_0(u)=\frac12\|u\|_2^2=\lim_{n\to\infty}c_{\mu_n}.
\end{equation}

\smallskip
\noindent\emph{Step 4: Identification of the limit.}
The smooth representative of $u$ is radial about the origin: for each
$Q\in O(N)$, $u\circ Q=u$ almost everywhere by $u\in W_{\rad}$, and
continuity makes this equality valid everywhere. Applying
Lemma~\ref{lem:radial-bound} to this representative gives
\[
 |u(x)|^2\le C_N|x|^{1-N}\|u\|_2\|\nabla u\|_2
 \qquad(x\ne0).
\]
Since $N\ge2$, this proves $u(x)\to0$ as $|x|\to\infty$.
Together with continuity on bounded sets, this also proves that $u$
is bounded on $\R^N$.

For $N\ge3$, we can now apply
\cite[Theorem~1.2]{dAveniaMontefuscoSquassina2014}: $u\in H^1(\R^N)$
is a strictly positive $C^2$ solution of the logarithmic equation
and tends to zero at infinity. We use that theorem with $\omega=0$;
its validity for every $\omega\in\R$ is stated in the introduction
of the cited paper. It yields a vector $a\in\R^N$ such that
\[
 u(x)=\exp\!\left(\frac{N-|x-a|^2}{2}\right).
\]

For $N=2$, we reduce the classification to dimension three. Set
\[
 g(z)=\exp\!\left(\frac{1-z^2}{2}\right),
 \qquad \widetilde u(x,z)=u(x)g(z),
 \qquad (x,z)\in\R^2\times\R.
\]
Differentiation gives
\[
 g'(z)=-zg(z),\qquad
 g''(z)=(z^2-1)g(z),\qquad
 -g''(z)=g(z)\log g(z)^2.
\]
Using the equations for $u$ and $g$, and their strict positivity,
we compute
\[
 \begin{aligned}
 -\Delta_{x,z}\widetilde u
 &=g(-\Delta_xu)-ug''\\
 &=ug\log u^2+ug\log g^2\\
 &=ug\log(u^2g^2)
  =\widetilde u\log\widetilde u^2.
 \end{aligned}
\]
The product is positive and smooth. Since $g,g'\in L^2(\R)$,
Tonelli's theorem gives
\[
 \begin{aligned}
 \|\widetilde u\|_{L^2(\R^3)}^2
 &=\|u\|_{L^2(\R^2)}^2\|g\|_{L^2(\R)}^2,\\
 \|\nabla_{x,z}\widetilde u\|_{L^2(\R^3)}^2
 &=\|\nabla_xu\|_{L^2(\R^2)}^2\|g\|_{L^2(\R)}^2
   +\|u\|_{L^2(\R^2)}^2\|g'\|_{L^2(\R)}^2.
 \end{aligned}
\]
Both quantities are finite, so $\widetilde u\in H^1(\R^3)$.
To verify decay in all directions, note that
$|(x,z)|>\sqrt2R$ implies either $|x|>R$ or $|z|>R$. Hence
\[
 \sup_{|(x,z)|>\sqrt2R}\widetilde u(x,z)
 \le\max\left\{
   \|g\|_\infty\sup_{|x|>R}u(x),\,
   \|u\|_\infty\sup_{|z|>R}g(z)
 \right\}\longrightarrow0
 \quad(R\to\infty).
\]
The same classification theorem, now in dimension three with
$\omega=0$, gives $a\in\R^2$ and $b\in\R$ such that
\[
 \widetilde u(x,z)
 =\exp\!\left(\frac{3-|x-a|^2-(z-b)^2}{2}\right).
\]
By the product definition,
$\partial_z\log\widetilde u=\frac{g'}{g}=-z$, whereas the displayed Gaussian
has logarithmic derivative $-(z-b)$. Equality for every $z$ gives
$b=0$. Division by $g(z)>0$ therefore yields
\[
 u(x)=\exp\!\left(\frac{2-|x-a|^2}{2}\right).
\]

Thus, in every dimension under consideration, $u$ is a translate of
$U_0$. Radiality about the origin gives $u(x)=u(-x)$ for all $x$.
In the Gaussian formula this implies
$|x-a|^2=|x+a|^2$, hence $4x\cdot a=0$ for every $x$.
Taking $x=a$ proves $a=0$, and therefore $u=U_0$.
Equations~\eqref{eq:limit-energy-comparison} and
\eqref{eq:gaussian-energy} now give $c_{\mu_n}\to c_0$ along the
extracted subsequence.

\smallskip
\noindent\emph{Step 5: Strong convergence in $H^1$.}
The first identity in \eqref{eq:ground-identities-repeat} gives
\[
 \|\nabla u_n\|_2^2=Nc_{\mu_n}
 \longrightarrow Nc_0=\|\nabla U_0\|_2^2.
\]
Here the last equality follows from
\eqref{eq:pohozaev-ground-identities} applied to the positive solution
$U_0\in W$ with $\mu=0$, together with $J_0(U_0)=c_0$.
Weak $H^1$ convergence to $U_0$ implies
\[
 \int_{\R^N}\nabla u_n\cdot\nabla U_0\,dx
 \longrightarrow\|\nabla U_0\|_2^2.
\]
Expanding the square therefore gives
\[
 \begin{aligned}
 \|\nabla u_n-\nabla U_0\|_2^2
 &=\|\nabla u_n\|_2^2+\|\nabla U_0\|_2^2
   -2\int_{\R^N}\nabla u_n\cdot\nabla U_0\,dx
 \longrightarrow0.
 \end{aligned}
\]
Combining this with strong $L^2$ convergence in
\eqref{eq:weak-W-limit} proves $\|u_n-U_0\|_{H^1}\to0$.

\smallskip
\noindent\emph{Step 6: Strong convergence in $V$ and removal of the subsequence.}
Fix $r\in(2,2^*)$. By \eqref{eq:weak-W-limit},
$\|u_n-U_0\|_r\to0$, while $(\|u_n\|_r)$ is bounded.
Using \eqref{eq:B-mean-value} and H\"older's inequality with exponents
$\frac r{r-1}$ and $r$, we obtain
\[
 \begin{aligned}
 \|B(u_n)-B(U_0)\|_1
 &\le C_r\int_{\R^N}
       \bigl(|u_n|^{r-1}+|U_0|^{r-1}\bigr)|u_n-U_0|\,dx\\
 &\le C_r\bigl(\|u_n\|_r^{r-1}+\|U_0\|_r^{r-1}\bigr)
               \|u_n-U_0\|_r\longrightarrow0.
 \end{aligned}
\]
In particular, the integrals of $B(u_n)$ converge to that of $B(U_0)$.
The Nehari identity \eqref{eq:nehari-solution} for $u_n$, together
with Step~5 and \eqref{eq:power-vanishing}, gives
\[
 \begin{aligned}
 \int_{\R^N}u_n^2\log u_n^2\,dx
 &=\|\nabla u_n\|_2^2-\mu_n\|u_n\|_{p+1}^{p+1}\\
 &\longrightarrow\|\nabla U_0\|_2^2
  =\int_{\R^N}U_0^2\log U_0^2\,dx,
 \end{aligned}
\]
where the last equality is \eqref{eq:limit-nehari-identity} for $U_0$.
All these logarithmic integrals are absolutely convergent by
\eqref{eq:W-log-equivalence}. Since
$A(|s|)=B(s)-s^2\log s^2$, subtraction of the two convergent integrals
yields
\[
 \begin{aligned}
 \rho_A(u_n)
 &=\int_{\R^N}B(u_n)\,dx
   -\int_{\R^N}u_n^2\log u_n^2\,dx\\
 &\longrightarrow
   \int_{\R^N}B(U_0)\,dx
   -\int_{\R^N}U_0^2\log U_0^2\,dx
  =\rho_A(U_0)<\infty.
 \end{aligned}
\]
We have already established $u_n\to U_0$ almost everywhere, and
$U_0\in V$ because $U_0=u\in W$. Thus both hypotheses of the modular
convergence criterion in Proposition~\ref{prop:orlicz-facts}{\rm(iii)}
hold, and it gives $\|u_n-U_0\|_V\to0$.
By the definition of the norm in Definition~\ref{def:energy-space},
\[
 \|u_n-U_0\|_W
 =\|u_n-U_0\|_{H^1}+\|u_n-U_0\|_V\longrightarrow0.
\]

Finally, the argument above applies to any subsequence of the original
sequence: its parameters still tend to zero, and all the bounds used
above are independent of the chosen ground states. Every subsequence
therefore has a further subsequence along which both
$c_{\mu_n}\to c_0$ and $u_n\to U_0$ strongly in $W$.
If \eqref{eq:sequence-W-convergence} failed for the original sequence,
there would be $\varepsilon>0$ and a subsequence such that
\[
 |c_{\mu_n}-c_0|+\|u_n-U_0\|_W\ge\varepsilon
\]
at every index of that subsequence. A further subsequence with the two
convergences just proved would make the left-hand side tend to zero,
which is a contradiction. This proves
\eqref{eq:sequence-W-convergence} for the entire sequence.
\end{proof}

\begin{corollary}[Uniform convergence of radial ground states]
\label{cor:set-uniform-W}
The two limits in \eqref{eq:set-uniform-W-main} hold.
\end{corollary}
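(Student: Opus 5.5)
We will deduce the corollary from Proposition~\ref{prop:every-selection} by a sequential contradiction argument; no new estimates are needed. Throughout we use that $\cG_\mu^{\rad}\neq\emptyset$ for every $\mu>0$ by Theorem~\ref{thm:existence}, so each supremum in \eqref{eq:set-uniform-W-main} is taken over a nonempty set.

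First we treat the levels. Suppose $c_\mu\to c_0$ fails as $\mu\downarrow0$. Then there exist $\varepsilon>0$ and a sequence $\mu_n\downarrow0$ with $\abs{c_{\mu_n}-c_0}\ge\varepsilon$. For each $n$, choose any $u_n\in\cG_{\mu_n}^{\rad}$. Proposition~\ref{prop:every-selection} gives $c_{\mu_n}\to c_0$, which is a contradiction.

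For the uniform $W$-convergence, set
\[
S(\mu)=\sup_{u\in\cG_\mu^{\rad}}\norm{u-U_0}_W\in[0,\infty].
\]
Suppose $\limsup_{\mu\downarrow0}S(\mu)>2\varepsilon>0$. Then there is a sequence $\mu_n\downarrow0$ with $S(\mu_n)>2\varepsilon$; this also covers the case $S(\mu_n)=+\infty$. By the definition of the supremum, we can pick $u_n\in\cG_{\mu_n}^{\rad}$ with $\norm{u_n-U_0}_W>\varepsilon$ for every $n$. Proposition~\ref{prop:every-selection} applies to every selection, so $u_n\to U_0$ in $W$, contradicting the lower bound. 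Hence $S(\mu)\to0$.

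The only point needing care is that the approximate maximizers $u_n$ are chosen without any regularity or measurability in $n$. This causes no difficulty, because Proposition~\ref{prop:every-selection} explicitly allows arbitrary choices $u_n\in\cG_{\mu_n}^{\rad}$. So there is no real obstacle: the corollary is the standard equivalence between sequential convergence for all selections and uniform convergence over the sets.
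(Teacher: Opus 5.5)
Your proposal is correct and matches the paper's proof: a sequential contradiction argument that reduces both limits in \eqref{eq:set-uniform-W-main} to Proposition~\ref{prop:every-selection}, applied to an arbitrary selection $u_n\in\cG_{\mu_n}^{\rad}$. You also invoke $\cG_\mu^{\rad}\neq\emptyset$ from Theorem~\ref{thm:existence} to get such a selection in the level case, a step the paper leaves implicit.
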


\begin{proof}
  If either assertion in \eqref{eq:set-uniform-W-main} failed, one could choose a sequence
\(\mu_n\downarrow0\) and, in the second case, ground states
\(u_n\in \cG_{\mu_n}^{\mathrm{rad}}\) for which the corresponding convergence fails.
This contradicts Proposition~\ref{prop:every-selection}.
\end{proof}

\subsection{Local regularity and global convergence}

Uniform boundedness follows from the iteration estimate in Lemma~\ref{lem:moser-iteration}. Positivity of the Gaussian on each fixed ball then permits smooth elliptic bootstrapping.

\begin{proposition}[Uniform boundedness and local smooth convergence]
\label{prop:uniform-local}

After decreasing \(\mu_1\) if necessary, the \(L^\infty\)-bound in
\eqref{eq:uniform-W-Linfty-main} holds. Moreover, for every \(R>0\), every integer \(k\ge0\)
and every \(\alpha\in(0,1)\),
\[
\lim_{\mu\downarrow0}
\sup_{u\in \cG_\mu^{\mathrm{rad}}}
\|u-U_0\|_{C^{k,\alpha}(B_R)}=0.
\]
In addition, \eqref{eq:local-smooth-main} holds for every \(R>0\), \(k\in\mathbb N_0\),
and \(\alpha\in(0,1)\).
\end{proposition}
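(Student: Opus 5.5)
The uniform $L^\infty$ bound comes from Lemma~\ref{lem:moser-iteration}, and the local smooth convergence comes from Proposition~\ref{prop:every-selection} followed by elliptic bootstrapping, organized as a sequential contradiction argument.

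\emph{$L^\infty$ bound.} Proposition~\ref{prop:uniform-energy} gives $\norm{u}_{H^1}\le C$ uniformly for $0<\mu\le\mu_1$ and $u\in\cG_\mu^{\rad}$. We apply Lemma~\ref{lem:moser-iteration} with $\bar\mu=\max\{1,\mu_1\}$ and $M_H$ equal to this uniform $H^1$ bound. Its output constant depends only on $N,p,\bar\mu,M_H$, so it yields $\norm{u}_\infty\le C(N,p,\mu_1)$. Combined with Proposition~\ref{prop:uniform-energy}, this gives \eqref{eq:uniform-W-Linfty-main}.

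\emph{Local convergence.} Suppose the claim fails. Then there are $R,k,\alpha$, $\varepsilon>0$, $\mu_n\downarrow0$ and $u_n\in\cG_{\mu_n}^{\rad}$ with $\norm{u_n-U_0}_{C^{k,\alpha}(B_R)}\ge\varepsilon$. Write $h(s)=s\log s^2$. Since $\norm{u_n}_\infty\le C$, the right-hand side $f_n=h(u_n)+\mu_nu_n^p$ is uniformly bounded. Interior $W^{2,q}$ estimates on $B_{R+2}$ (with $q>N$) together with Sobolev embedding give a uniform $C^{1,\beta}(B_{R+1})$ bound. By Arzel\`a--Ascoli, a subsequence converges in $C^1(\overline{B}_{R+1})$, and the limit must be $U_0$ because $u_n\to U_0$ in $L^2$ by Proposition~\ref{prop:every-selection}. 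In particular $u_n\to U_0$ uniformly on $\overline B_{R+1}$.

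\emph{Bootstrap, and the expected obstacle.} The nonlinearity is not smooth at $0$, so we need a uniform positive lower bound before bootstrapping. Let $m=\min_{\overline B_{R+1}}U_0>0$. For large $n$, uniform convergence gives $m/2\le u_n\le C$ on $B_{R+1}$. On $[m/2,C]$ the map $s\mapsto h(s)+\mu s^p$ is smooth, with all derivatives bounded uniformly in $\mu\le\mu_1$. Bootstrap on a shrinking sequence of balls $B_{R+1}\supset\cdots\supset B_R$:
\begin{enumerate}[\rm(a)]
\item a $C^{j,\beta}$ bound on $u_n$ gives a $C^{j,\beta}$ bound on $f_n$;
\item Schauder estimates then give a $C^{j+2,\beta}$ bound on $u_n$.
\end{enumerate}
Iterating yields uniform $C^{k+1,\alpha}(B_R)$ bounds. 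Arzel\`a--Ascoli and uniqueness of the limit $U_0$ then give convergence in $C^{k,\alpha}(B_R)$ along a subsequence, contradicting the choice of $\varepsilon$. This proves \eqref{eq:local-smooth-main}. The main point needing care is making the lower bound and the Schauder constants uniform in $n$ and $\mu$. The preliminary $C^0$ convergence must therefore be obtained from the bounded, merely continuous right-hand side before any use of smoothness.
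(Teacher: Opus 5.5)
Your proposal is correct and follows essentially the same route as the paper. It uses Moser iteration with the uniform $H^1$ bound for the $L^\infty$ estimate, then uniform interior $W^{2,q}$ bounds and a compact embedding into $C^{1,\beta}$, with the limit identified through Proposition~\ref{prop:every-selection}; the positive lower bound inherited from $U_0$ on a slightly larger ball then lets you run the Schauder bootstrap and conclude by compactness along arbitrary sequences. The contradiction packaging, and working on $B_{R+2}$ so that uniform convergence on $B_{R+1}$ is in hand before invoking the lower bound, are only bookkeeping differences from the paper's argument.
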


\begin{proof}
The uniform bound in $W$ follows from
Proposition~\ref{prop:uniform-energy}.  The global Moser iteration in
Lemma~\ref{lem:moser-iteration} applies because the $H^1$ norms of the solutions and the parameter $\mu$ are uniformly bounded; it gives the asserted
$L^\infty$ bound.  The truncations and exponent choices for the iteration
scheme of \cite{Moser1961} are given in the appendix.

The right-hand side
$u\log u^2+\mu u^p$ is now uniformly bounded.  Interior estimates on
every pair $B_1(y)\Subset B_2(y)$ give, for every $1<q<\infty$,
\begin{equation}\label{eq:uniform-local-W2q}
 \sup_{0<\mu\le\mu_1}\sup_{u\in\cG_\mu^{\rad}}\sup_{y\in\R^N}
 \norm{u}_{W^{2,q}(B_1(y))}\le C_q.
\end{equation}

Fix \(R>0\) and \(\beta\in(0,1)\). Choose \(q>N\) sufficiently large so that
\[
W^{2,q}(B_{R+1})\hookrightarrow\hookrightarrow C^{1,\beta}(B_R).
\]
For any sequence \(\mu_n\downarrow0\) and
\(u_n\in \cG_{\mu_n}^{\mathrm{rad}}\), Proposition~\ref{prop:every-selection} and \eqref{eq:uniform-local-W2q}
therefore imply
\[
u_n\to U_0\qquad\text{in }C^{1,\beta}(B_R).
\]
 Uniqueness of the limit gives convergence of the whole sequence.
Since $\min_{\overline{B_{R+1}}}U_0>0$, the solutions have a common
positive lower bound on $B_{R+1}$ for all sufficiently large $n$.  On that fixed positive range,
$s\mapsto s\log s^2+\mu_ns^p$ and all derivatives needed for a fixed
$k$ are uniformly controlled. 

Interior Schauder estimates on nested balls give uniform bounds of every fixed order. 
For given $k$ and $\alpha$, choose $\alpha<\beta<1$ and use a uniform $C^{k+1,\beta}$ bound
 on a slightly larger ball. Compact 
embedding into $C^{k,\alpha}(\overline{B_R})$ and the already identified local limit imply
\[
u_n\to U_0\qquad\text{in }C^{k,\alpha}(B_R).
\]
As this holds for every
sequence $\mu_n\downarrow0$ and every choice
$u_n\in\cG_{\mu_n}^{\rad}$, it gives \eqref{eq:local-smooth-main}.
\end{proof}

The radial estimate makes the solutions uniformly small at infinity. The H\"older control of the logarithmic term in
Lemma~\ref{lem:holder-tail} converts this smallness into the right-hand-side estimates needed for Schauder theory.

\begin{proposition}[Uniform far-field regularity]\label{prop:uniform-tail}
The algebraic estimate \eqref{eq:uniform-algebraic-main} holds. Moreover, for every $\alpha\in(0,1)$, the uniform limit \eqref{eq:uniform-tail-main} also holds.
\end{proposition}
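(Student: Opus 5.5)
The algebraic estimate \eqref{eq:uniform-algebraic-main} will follow directly from Lemma~\ref{lem:radial-bound} combined with Proposition~\ref{prop:uniform-energy}. Every $u\in\cG_\mu^{\rad}$ with $0<\mu\le\mu_1$ satisfies $\norm{u}_2\norm{\nabla u}_2\le\norm{u}_{H^1}^2\le C$. Hence
\[
 u(x)^2\le C_N\abs{x}^{1-N}\norm{u}_2\norm{\nabla u}_2\le C\abs{x}^{1-N},
\]
which gives $\abs{u(x)}\le C\abs{x}^{-\frac{N-1}{2}}$ for $\abs{x}\ge1$, uniformly in $\mu$ and in the choice of ground state. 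In particular,
\[
 \varepsilon(R):=\sup_{0<\mu\le\mu_1}\sup_{u\in\cG_\mu^{\rad}}\sup_{\abs{x}\ge R}u(x)\le CR^{-\frac{N-1}{2}}\longrightarrow0 .
\]

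For \eqref{eq:uniform-tail-main}, fix $\alpha\in(0,1)$ and $\abs{y}\ge R+2$, so that $B_2(y)\subset\{\abs{x}\ge R\}$ and $0<u\le\varepsilon(R)$ on $B_2(y)$. Write $-\Delta u=f_\mu(u)$ with $f_\mu(s)=s\log s^2+\mu s^p$. The plan is a two-step bootstrap on the nested balls $B_1(y)\Subset B_{3/2}(y)\Subset B_2(y)$, with every constant independent of $y$, $\mu$ and $u$.

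\emph{First step.} On $B_2(y)$ we have $\norm{f_\mu(u)}_{L^\infty}\le 2\varepsilon\abs{\log\varepsilon^2}+\mu_1\varepsilon^p=:\omega(\varepsilon)\to0$. The interior $W^{2,q}$ estimate gives
\[
 \norm{u}_{W^{2,q}(B_{3/2}(y))}\le C_q\bigl(\norm{u}_{L^q(B_2(y))}+\norm{f_\mu(u)}_{L^q(B_2(y))}\bigr)\le C_q(\varepsilon+\omega(\varepsilon)).
\]
Taking $q>N$ large, this yields $\norm{u}_{C^{1,\beta}(B_{3/2}(y))}\le C(\varepsilon+\omega(\varepsilon))$ for some $\beta\in(\alpha,1)$.

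\emph{Second step.} To apply Schauder theory we need a small $C^{0,\alpha}$ norm of $f_\mu(u)$ on $B_{3/2}(y)$, and this is the main obstacle: $f_\mu'$ is unbounded at $0$, so $f_\mu(u)$ is not controlled in Lipschitz norm. I would invoke Lemma~\ref{lem:holder-tail} (the appendix H\"older estimate for $h(s)=s\log s^2$). Its content should be that for any $\gamma\in(0,1)$ and $0\le s,t\le\delta_0$,
\[
 \abs{h(s)-h(t)}\le C_\gamma\abs{s-t}^\gamma ,
\]
and moreover that the constant can be taken small when $s,t\le\varepsilon$, for instance by interpolating with $\abs{h(s)-h(t)}\le 2\omega(\varepsilon)$. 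If the lemma does not provide this directly, one proves it by hand: $\abs{h(s)-h(t)}\le\min\{2\omega(\varepsilon),\,C\abs{\log\abs{s-t}}\abs{s-t}\}$, using that $h'(\xi)=\log\xi^2+2$ and bounding the contribution near $0$ separately. Choosing $\gamma\in(\alpha,1)$ and $\theta$ with $\alpha=\theta\gamma$, we obtain
\[
 [h(u)]_{C^{0,\alpha}}\le C\,\omega(\varepsilon)^{1-\theta}\,[u]_{C^{0,1}}^{\alpha}\to0 .
\]
The power term is Lipschitz on $[0,\varepsilon]$ for $p>1$, so its seminorm is at most $C\mu_1\varepsilon^{p-1}[u]_{C^{0,1}}$.

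Interior Schauder estimates on $B_1(y)\Subset B_{3/2}(y)$ then give
\[
 \norm{u}_{C^{2,\alpha}(B_1(y))}\le C\bigl(\norm{u}_{L^\infty(B_{3/2}(y))}+\norm{f_\mu(u)}_{C^{0,\alpha}(B_{3/2}(y))}\bigr)\to0
\]
as $R\to\infty$, uniformly in $\mu\le\mu_1$, $u$ and $\abs{y}\ge R+2$. This proves \eqref{eq:uniform-tail-main}.
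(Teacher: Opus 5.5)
Your proposal is correct and follows essentially the same route as the paper. The algebraic estimate \eqref{eq:uniform-algebraic-main} comes from the Strauss bound of Lemma~\ref{lem:radial-bound} together with the uniform $H^1$ bound, and \eqref{eq:uniform-tail-main} then follows from a uniform Lipschitz bound, the H\"older continuity of $s\log s^2$ interpolated against the uniform $L^\infty$ smallness of the right-hand side (Lemma~\ref{lem:holder-tail}), and an interior Schauder estimate. The one minor difference is that you get the Lipschitz bound from a local $W^{2,q}$ estimate driven by the far-field smallness itself, instead of quoting \eqref{eq:uniform-local-W2q}, which rests on the global Moser $L^\infty$ bound; this makes your version slightly more self-contained without changing the argument.
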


\begin{proof}
The uniform $H^1$ bound and Lemma~\ref{lem:radial-bound} give
\eqref{eq:uniform-algebraic-main}. Hence
\begin{equation}\label{eq:uniform-C0-tail}
 \lim_{R\to\infty}\sup_{0<\mu\le\mu_1}
 \sup_{u\in\cG_\mu^{\rad}}\sup_{\abs{x}\ge R}u(x)=0.
\end{equation}
Estimate \eqref{eq:uniform-local-W2q}, with $q>N$, gives a uniform
Lipschitz bound on balls of every fixed radius, with constants independent of their centers.
 In particular, it applies on $B_2(y)$.  For every $\sigma\in(0,1)$, the
map $h(s)=s\log s^2$, $h(0)=0$, is $\sigma$-H\"older on each fixed
$[0,M]$, while $s\mapsto s^p$ is Lipschitz there.  Consequently the right-hand sides 
are uniformly $C^{0,\sigma}$ on $B_2(y)$..

Fix $\alpha<\sigma<1$.  
By \eqref{eq:uniform-C0-tail} and the continuity of
\(s\mapsto s\log s^2\) at \(s=0\), the \(L^\infty\)-norms of the right-hand sides
\[
u\log u^2+\mu u^p
\]
on balls centered at \(y\) tend uniformly to zero as \(|y|\to\infty\).
Since these right-hand sides have a uniform \(C^{0,\sigma}\)-bound, the interpolation
inequality \eqref{eq:holder-interpolation} implies that their \(C^{0,\alpha}\)-norms also tend uniformly to zero
for every \(0<\alpha<\sigma<1\).

The interior Schauder estimate on
$B_1(y)\Subset B_2(y)$ gives
\[
 \norm{u}_{C^{2,\alpha}(B_1(y))}
 \le C\bigl(\norm{u}_{C^0(B_2(y))}
 +\norm{u\log u^2+\mu u^p}_{C^{0,\alpha}(B_2(y))}\bigr),
\]
where $C=C(N,\alpha)$ is independent of $u,\mu,y$.  
Both terms therefore tend to zero as \(|y|\to\infty\), uniformly with respect to
\(0<\mu\le\mu_1\) and \(u\in \cG_\mu^{\mathrm{rad}}\), proving
\eqref{eq:uniform-tail-main}.
\end{proof}

\begin{proposition}[Global H\"older convergence]\label{prop:global-holder}
For every $\alpha\in(0,1)$, \eqref{eq:global-smooth-main} holds.
\end{proposition}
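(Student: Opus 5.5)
We argue by splitting $\R^N$ into a fixed large ball and its exterior, combining Proposition~\ref{prop:uniform-local} inside with Proposition~\ref{prop:uniform-tail} outside. Fix $\alpha\in(0,1)$ and $\varepsilon>0$.

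\emph{Exterior region.} By \eqref{eq:uniform-tail-main} there is $R_0$ such that for $\abs{y}\ge R_0+2$, all $0<\mu\le\mu_1$ and all $u\in\cG_\mu^{\rad}$ we have $\norm{u}_{C^{2,\alpha}(B_1(y))}\le\varepsilon$. The Gaussian has the same smallness on these balls, after enlarging $R_0$: $U_0$ and its derivatives up to order three decay like $e^{-\abs{x}^2/4}$, so its $C^{2,\alpha}(B_1(y))$ norm tends to zero as $\abs{y}\to\infty$. Hence $\norm{u-U_0}_{C^{2,\alpha}(B_1(y))}\le2\varepsilon$ for every such $y$.

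\emph{Interior region.} Set $R=R_0+4$. By \eqref{eq:local-smooth-main} with $k=2$, there is $\mu_\varepsilon>0$ such that
$\sup_{u\in\cG_\mu^{\rad}}\norm{u-U_0}_{C^{2,\alpha}(B_R)}\le\varepsilon$ for $0<\mu<\mu_\varepsilon$.

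\emph{Patching.} Write $w=u-U_0$. Every point of $\R^N$ lies either in $B_R$ or in some $B_1(y)$ with $\abs{y}\ge R_0+2$, so $\norm{D^jw}_{L^\infty(\R^N)}\le2\varepsilon$ for $j=0,1,2$. For the seminorm $[D^2w]_{C^{0,\alpha}(\R^N)}$, take $x,z$.
\begin{itemize}
\item[] If $\abs{x-z}\ge1$, the quotient is at most $2\norm{D^2w}_\infty\le4\varepsilon$.
\item[] If $\abs{x-z}<1$ and both points lie in $B_R$, the quotient is bounded by $\varepsilon$ from the interior estimate.
\item[] Otherwise one point, say $x$, satisfies $\abs{x}\ge R=R_0+4$. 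Take $y=x$: then $\abs{y}\ge R_0+2$ and $z\in B_1(y)$, so the exterior bound applies.
\end{itemize}
Hence $\norm{w}_{C^{2,\alpha}(\R^N)}\le C\varepsilon$ uniformly over $\cG_\mu^{\rad}$ for $\mu<\mu_\varepsilon$, which is \eqref{eq:global-smooth-main}.

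The only genuinely nontrivial input is the uniformity of the tail estimate \eqref{eq:uniform-tail-main} over $\mu$ and over the whole ground-state set, and that is already established. The remaining care is in the overlapping-cover bookkeeping for the Hölder seminorm, specifically choosing the ball radius so that close pairs of points always fit inside a single ball of the cover. This is elementary.
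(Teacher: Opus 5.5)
Your proof is correct and follows essentially the same route as the paper. Both arguments take exterior smallness from \eqref{eq:uniform-tail-main} together with the decay of $U_0$, and interior convergence from \eqref{eq:local-smooth-main}. Both then split the H\"older seminorm into three cases: $|x-z|\ge1$, both points in a fixed ball, or one point far out so that the unit ball centred at it contains the pair.

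The differences are cosmetic. The paper argues along sequences and passes through an auxiliary exponent $\beta>\alpha$, which your direct use of $\alpha$ shows is unnecessary. You should also restrict to $\mu<\min\{\mu_\varepsilon,\mu_1\}$, since the tail bound is only available for $\mu\le\mu_1$.
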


\begin{proof}
Take an arbitrary sequence $\mu_n\downarrow0$ and
$u_n\in\cG_{\mu_n}^{\rad}$.  On each fixed ball,
Proposition~\ref{prop:uniform-local} gives convergence in a
$C^{2,\beta}$ norm with a fixed $\alpha<\beta<1$.  

Choose \(R>0\) sufficiently large. By Proposition~\ref{prop:uniform-tail}, with \(\beta\) in place of
\(\alpha\), the local \(C^{2,\beta}\)-norms of \(u_n\) are uniformly small outside
\(B_R\). The same is true for \(U_0\), since the Gaussian and all its derivatives
vanish at infinity.
Combining these exterior estimates with the local convergence from Proposition~\ref{prop:uniform-local} gives
\[
\|D^j(u_n-U_0)\|_\infty\to0,
\qquad j=0,1,2.
\]
To control the remaining seminorm, set $f_n=D^2(u_n-U_0)$ and use

\[\frac{\abs{f_n(x)-f_n(y)}}{\abs{x-y}^\alpha}
 \le [f_n]_{C^{0,\beta}(B_1(x))}\abs{x-y}^{\beta-\alpha},\quad
 0<\abs{x-y}<1,\]
 and
 \[\frac{\abs{f_n(x)-f_n(y)}}{\abs{x-y}^\alpha}
 \le 2\norm{f_n}_\infty,\quad
 \abs{x-y}\ge1.\]

Consider first \(0<|x-y|<1\). If one of the two points lies outside
\(B_{R+2}\), relabel the pair so that \(x\notin B_{R+2}\). Then
\(B_1(x)\) lies in the far field, and the first estimate above is uniformly small
by Proposition~\ref{prop:uniform-tail}. 

If both points lie in \(B_{R+2}\), then
\(x,y\in B_{R+3}\), and the same quotient is controlled by the local
\(C^{2,\beta}\)-convergence from Proposition~\ref{prop:uniform-local}. For \(|x-y|\ge1\),
the second estimate and \(\|f_n\|_\infty\to0\) apply.
Therefore
\[
[f_n]_{C^{0,\alpha}(\mathbb R^N)}\to0,
\]
which proves the convergence in \eqref{eq:global-holder-norm} and
 hence \eqref{eq:global-smooth-main}.
\end{proof}

\begin{proof}[Proof of Theorem~\ref{thm:limit}]
Proposition~\ref{prop:every-selection} and
Corollary~\ref{cor:set-uniform-W} give the energy-space limit and
$c_\mu\to c_0$ in \textup{(i)}.
The bounds in \eqref{eq:uniform-W-Linfty-main} follow from
Propositions~\ref{prop:uniform-energy} and \ref{prop:uniform-local}, respectively,
while Proposition~\ref{prop:uniform-tail} gives
\eqref{eq:uniform-algebraic-main} and \eqref{eq:uniform-tail-main}.  Finally,
Propositions~\ref{prop:uniform-local} and \ref{prop:global-holder}
give the local and global convergence in
\eqref{eq:local-smooth-main} and \eqref{eq:global-smooth-main}. This completes the proof.
\end{proof}

\section{Uniqueness and nondegeneracy of ground states}
\label{sec:uniqueness-nondegeneracy}
We prove Theorem~\ref{thm:nondegeneracy} using the Gaussian linearization.
The coefficient estimates from Section~\ref{sec:limit}, combined with a cutoff energy argument,
prevent normalized difference and kernel sequences from vanishing locally.
Compactness then yields nonzero limits, whose radiality or orthogonality to the
translation modes contradicts the Gaussian kernel characterization.
Section~\ref{subsec:uniqueness-translations} combines radial uniqueness
with the symmetry proved in Section~\ref{sec:existence} to obtain
uniqueness up to translations.

\subsection{Uniqueness by the normalized-difference method}
\label{sec:uniqueness}

Subtracting the equations for two ground states gives a linear equation
for their difference. Divided differences extend this identity to points where the two solutions agree and allow the estimates of
Section~\ref{sec:limit} to be applied to its coefficient.

For $s,t>0$ define the two divided differences, including their diagonal values, by
\begin{equation}\label{eq:divided-differences}
 D_{\log}(s,t)=
 \begin{cases}
 \dfrac{s\log s^2-t\log t^2}{s-t},&s\ne t,\\[2mm]
 \log s^2+2,&s=t,
 \end{cases}
 \qquad
 D_p(s,t)=
 \begin{cases}
 \dfrac{s^p-t^p}{s-t},&s\ne t,\\[2mm]
 ps^{p-1},&s=t.
 \end{cases}
\end{equation}
The fundamental theorem of calculus gives the formulas
\begin{equation}\label{eq:divided-difference-integrals}
 D_{\log}(s,t)=\int_0^1\bigl(\log(\theta s+(1-\theta)t)^2+2\bigr)\,d\theta,
 \quad
 D_p(s,t)=p\int_0^1(\theta s+(1-\theta)t)^{p-1}\,d\theta.
\end{equation}

\begin{lemma}[Difference coefficients]\label{lem:difference-coefficients}

Suppose $\mu_n\downarrow 0$ and 
\[
u_n,v_n\in \cG_{\mu_n}^{\mathrm{rad}}.
\]
Define
\begin{equation}\label{eq:difference-coefficient}
 a_n(x)=D_{\log}(u_n(x),v_n(x))
       +\mu_nD_p(u_n(x),v_n(x)).
\end{equation}
Then the following properties hold:

\begin{enumerate}[\rm{(}i\rm{)}]
    \item $a_n$ converges locally uniformly to
    \[
    a_0(x):=N+2-|x|^2;
    \]
    equivalently, for every compact set $K\subset\mathbb{R}^N$,
    \begin{equation}\label{eq:coefficient-compact-limit}
    \|a_n-a_0\|_{L^\infty(K)}\longrightarrow 0.
    \end{equation}

    \item There exist constants $C_0>0$ and sufficiently large $n_0\in\mathbb{N}$ such that
    \[
    a_n(x)\le C_0
    \qquad
    \text{for all }x\in\mathbb{R}^N
    \text{ and }n\ge n_0.
    \]

    \item For every $A>0$, there exist $R_A>0$ and
    $n_A\in\mathbb{N}$ such that
    \begin{equation}\label{eq:coefficient-negative-tail}
    a_n(x)\le -A
    \qquad
    \text{whenever }|x|\ge R_A
    \text{ and }n\ge n_A.
    \end{equation}
\end{enumerate}

\end{lemma}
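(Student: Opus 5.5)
The plan is to read off all three properties from the integral representations \eqref{eq:divided-difference-integrals}. The logarithm there is monotone in its argument $\theta u_n+(1-\theta)v_n$, so pointwise bounds on $u_n,v_n$ translate directly into bounds on $a_n$.

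For (i), fix a compact $K\subset B_R$. By Theorem~\ref{thm:limit}(iii) (the $C^{0}$ part of \eqref{eq:local-smooth-main}), $u_n\to U_0$ and $v_n\to U_0$ uniformly on $\overline{B_R}$, uniformly over the choice in $\cG_{\mu_n}^{\rad}$. Since $m:=\min_{\overline{B_R}}U_0>0$, for $n$ large both $u_n,v_n$ take values in $[m/2,2\max U_0]$ on $K$. Every convex combination $\theta u_n+(1-\theta)v_n$ then lies in this interval and converges uniformly to $U_0$. On this interval $s\mapsto\log s^2$ is Lipschitz, so
\[
\sup_K\Bigl|D_{\log}(u_n,v_n)-\bigl(\log U_0^2+2\bigr)\Bigr|\le C\bigl(\norm{u_n-U_0}_{L^\infty(K)}+\norm{v_n-U_0}_{L^\infty(K)}\bigr)\to0 .
\]
Here $\log U_0^2+2=N+2-\abs{x}^2=a_0$. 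The power term satisfies $0\le\mu_nD_p\le\mu_n p(2\max U_0)^{p-1}\to0$ on $K$, which gives \eqref{eq:coefficient-compact-limit}.

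For (ii) and (iii), use the global bounds \eqref{eq:uniform-W-Linfty-main} and \eqref{eq:uniform-algebraic-main} for $n$ large, so that $\mu_n\le\mu_1$. Then $0<u_n,v_n\le C$ everywhere and $u_n(x),v_n(x)\le C\abs{x}^{-\frac{N-1}{2}}$ for $\abs{x}\ge1$. Since $\log$ is increasing, $\log(\theta s+(1-\theta)t)^2\le\log\max\{s,t\}^2$, and hence
\[
a_n(x)\le \log\bigl(\max\{u_n(x),v_n(x)\}\bigr)^2+2+\mu_1 p\,C^{p-1}.
\]
This gives (ii) with $C_0=2\log C+2+\mu_1pC^{p-1}$, taking $C\ge1$ without loss of generality. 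For (iii), on $\abs{x}\ge R$ the same bound gives
\[
a_n(x)\le 2\log C-(N-1)\log R+2+\mu_1pC^{p-1}.
\]
Since $N\ge2$, this tends to $-\infty$ as $R\to\infty$, uniformly in $n\ge n_0$. Choosing $R_A$ with right-hand side $\le -A$ proves \eqref{eq:coefficient-negative-tail}, with $n_A=n_0$. Alternatively one could invoke \eqref{eq:uniform-C0-tail} directly.

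The main obstacle is conceptual rather than computational. The coefficient $a_n$ is unbounded below exactly where $u_n,v_n$ are small, so any lower bound would fail. The monotonicity of $\log$ in the integral formula turns this into an upper bound controlled by $\max\{u_n,v_n\}$. The only subtle point is in (i): we need a uniform positive lower bound for $u_n,v_n$ on $K$, which comes from local uniform convergence to the strictly positive Gaussian, and the uniformity over the ground-state set is exactly what Theorem~\ref{thm:limit} provides.
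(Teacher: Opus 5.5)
Your proposal is correct and follows essentially the paper's argument. In both, the integral representations in $\theta u_n+(1-\theta)v_n$ give (i) from local uniform convergence to the positive Gaussian, (ii) from the uniform $L^\infty$ bound together with monotonicity of $\log s^2$ and $s^{p-1}$, and (iii) from uniform far-field smallness. The only cosmetic difference is in (iii): you choose $R_A$ explicitly from the algebraic decay \eqref{eq:uniform-algebraic-main}, as the paper itself does later in the kernel argument, whereas the paper's proof of this lemma first fixes $\delta$ with $\log\delta^2+2+\mu_1p\delta^{p-1}\le -A$.
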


\begin{proof}
For $\theta\in[0,1]$, set
\[
    z_{n,\theta}(x)
    :=
    \theta u_n(x)+(1-\theta)v_n(x).
\]
By the integral representations in \eqref{eq:divided-difference-integrals},
\[
    D_{\log}(u_n,v_n)
    =
    \int_0^1
    \bigl(\log z_{n,\theta}^2+2\bigr)\,d\theta,
\]
and
\[
    D_p(u_n,v_n)
    =
    p\int_0^1
    z_{n,\theta}^{p-1}\,d\theta.
\]

Let $K\subset\mathbb R^N$ be compact.
By Proposition~\ref{prop:uniform-local},
\[
    u_n,v_n\to U_0
    \qquad\text{uniformly on }K.
\]
Since $U_0>0$ and
\[
    m_K:=\min_K U_0>0,
\]
for all sufficiently large $n$,
\[
    z_{n,\theta}(x)\ge \frac{m_K}{2}
\]
uniformly for $x\in K$ and $\theta\in[0,1]$.
Moreover,
\[
    z_{n,\theta}\to U_0
\]
uniformly on $K$, uniformly with respect to
$\theta\in[0,1]$.
Hence
\[
    D_{\log}(u_n,v_n)
    \longrightarrow
    \log U_0^2+2
\]
uniformly on $K$.
The same local boundedness gives
\[
    |D_p(u_n,v_n)|\le C_K,
\]
and therefore, since $\mu_n\to0$,
\[
    \mu_nD_p(u_n,v_n)\to0
\]
uniformly on $K$.
Since
\[
    \log U_0^2+2
    =
    N+2-|x|^2,
\]
this proves (i).

We next prove the global upper bound.
By the uniform $L^\infty$ estimate in Theorem~\ref{thm:limit},
there exists $M>0$ such that, for all sufficiently large $n$,
\[
    0<u_n(x),v_n(x)\le M
    \qquad
    \text{for all }x\in\mathbb R^N.
\]
Hence
\[
    0<z_{n,\theta}(x)\le M.
\]
Using the integral formulas above,
\[
    D_{\log}(u_n,v_n)
    \le
    \log M^2+2,
\]
and
\[
    D_p(u_n,v_n)
    \le
    pM^{p-1}.
\]
Since $\mu_n\le\mu_1$ for all sufficiently large $n$,
\[
    a_n(x)
    \le
    \log M^2+2+\mu_1pM^{p-1}.
\]
After enlarging the constant if necessary, this proves (ii).

Finally, fix $A>0$.
Since
\[
    \log\delta^2\to-\infty,
    \qquad
    \delta^{p-1}\to0
    \quad\text{as }\delta\downarrow0,
\]
we may choose $\delta>0$ such that
\[
    \log\delta^2+2
    +
    \mu_1p\delta^{p-1}
    \le -A.
\]
By the uniform far-field smallness in
Theorem~\ref{thm:limit}, there exists $R_A>0$
such that, for all sufficiently large $n$,
\[
    u_n(x),v_n(x)\le\delta
    \qquad
    \text{whenever }|x|\ge R_A.
\]
Therefore
\[
    z_{n,\theta}(x)\le\delta
\]
for every $\theta\in[0,1]$, and the integral representations give
\[
\begin{aligned}
    a_n(x)
    &\le
    \log\delta^2+2
    +
    \mu_np\delta^{p-1} \\
    &\le
    \log\delta^2+2
    +
    \mu_1p\delta^{p-1}
    \le -A.
\end{aligned}
\]
This proves (iii).
\end{proof}

A normalized sequence may converge to zero on every bounded set.
The lemma below justifies the global energy identity for coefficients bounded above and uses their negativity outside a fixed ball to exclude local vanishing.

\begin{lemma}[Nonvanishing of normalized sequences]\label{lem:no-loss}

Let \(C_0>0\), \(A>0\), and \(R>0\). Suppose that
\[
w_n\in L^2(\mathbb R^N)\cap H^1_{\mathrm{loc}}(\mathbb R^N),
\qquad
\|w_n\|_2=1,
\]
and that \(a_n\in L^\infty_{\mathrm{loc}}(\mathbb R^N)\) satisfies
\[
-\Delta w_n=a_n w_n
\qquad\text{in }\mathcal D'(\mathbb R^N).
\]
Assume further that, for every \(n\),
\[
a_n\le C_0 \quad\text{a.e. in }\mathbb R^N,
\qquad
a_n\le -A \quad\text{a.e. in }\mathbb R^N\setminus B_R.
\]
Then \((w_n)\) is bounded in \(H^1(\mathbb R^N)\),
\begin{equation}\label{eq:linear-energy-identity}
 \norm{\nabla w_n}_2^2=\int a_nw_n^2
\end{equation}
and we have
\begin{equation}\label{eq:local-mass-lower}
 \int_{B_R}w_n^2\ge\frac{A}{A+C_0}.
\end{equation}

\end{lemma}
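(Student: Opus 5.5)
The plan is a cutoff (Caccioppoli-type) argument that yields a localized energy identity, from which we pass to the limit. We test the equation against $\eta_R^2 w_n$, where $\eta_R$ is the cutoff fixed in Section~\ref{sec:preliminaries}, but we use a large parameter $\rho\to\infty$ in place of $R$ so as not to clash with the $R$ in the statement. Since $w_n\in H^1_{\loc}$ and $a_n\in L^\infty_{\loc}$, the function $\eta_\rho^2w_n$ is an admissible test function (by density of $C_c^\infty$ in $H^1_0$ of a ball, with $a_nw_n\in L^2_{\loc}$). Expanding $\nabla(\eta_\rho w_n)$ gives the standard identity
\[
 \int\abs{\nabla(\eta_\rho w_n)}^2=\int a_n\eta_\rho^2w_n^2+\int\abs{\nabla\eta_\rho}^2w_n^2 .
\]
The last term is at most $\frac{C}{\rho^2}\norm{w_n}_2^2=\frac{C}{\rho^2}$. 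Using $a_n\le C_0$ we obtain $\int\abs{\nabla(\eta_\rho w_n)}^2\le C_0+C\rho^{-2}$. Letting $\rho\to\infty$ and applying Fatou's lemma (with $\eta_\rho\nabla w_n\to\nabla w_n$ a.e.) yields $\norm{\nabla w_n}_2^2\le C_0$. Hence $w_n\in H^1$ with $\norm{w_n}_{H^1}^2\le 1+C_0$, uniformly in $n$.

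For the identity \eqref{eq:linear-energy-identity}, we pass to the limit $\rho\to\infty$ in the cutoff identity, handling the $a_n$ term through its sign structure. The left side converges to $\norm{\nabla w_n}_2^2$, since $\nabla(\eta_\rho w_n)\to\nabla w_n$ in $L^2$ now that $w_n\in H^1$; the gradient-of-cutoff term tends to $0$. We split $a_n=a_n^+-a_n^-$. Because $a_n^+\le C_0$ and $w_n^2\in L^1$, dominated convergence gives $\int a_n^+\eta_\rho^2w_n^2\to\int a_n^+w_n^2<\infty$. Then $\int a_n^-\eta_\rho^2w_n^2$ equals a convergent quantity, so it is bounded. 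By monotone convergence (taking $\eta$ radially nonincreasing, so that $\eta_\rho^2$ increases in $\rho$; otherwise Fatou combined with the identity suffices), $a_n^-w_n^2\in L^1$ and its integral converges. This proves \eqref{eq:linear-energy-identity}, with $\int a_nw_n^2$ absolutely convergent.

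The local mass bound then follows directly. From \eqref{eq:linear-energy-identity},
\[
 0\le\norm{\nabla w_n}_2^2=\int_{B_R}a_nw_n^2+\int_{\R^N\setminus B_R}a_nw_n^2\le C_0\int_{B_R}w_n^2-A\Bigl(1-\int_{B_R}w_n^2\Bigr).
\]
Rearranging gives $(A+C_0)\int_{B_R}w_n^2\ge A$, which is \eqref{eq:local-mass-lower}.

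The only delicate step is the justification of the global identity. The coefficient $a_n$ may be unbounded below, so $\int a_nw_n^2$ is not a priori defined. This is resolved by the one-sided bound $a_n\le C_0$, which lets the negative part be controlled by monotone convergence, as described above.
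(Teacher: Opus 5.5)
Your proof is correct and follows essentially the paper's route: you test with $\eta_\rho^2 w_n$, let the cutoff radius tend to infinity using $a_n\le C_0$ and Fatou's lemma to get the $H^1$ bound and the integrability of $a_n w_n^2$, pass to the limit in the cutoff identity, and then split over $B_R$ and its complement. The only differences are cosmetic. You use the exact identity $\int|\nabla(\eta_\rho w_n)|^2=\int a_n\eta_\rho^2w_n^2+\int|\nabla\eta_\rho|^2w_n^2$ where the paper uses Young's inequality, and you split $a_n=a_n^+-a_n^-$ where the paper works with $b_n=C_0-a_n\ge 0$.
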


\begin{proof}
The potential may be unbounded below, so
\eqref{eq:linear-energy-identity} is used only after the required integrability has been established.  Put $b_n=C_0-a_n\ge0$, and test
$-\Delta w_n+b_nw_n=C_0w_n$ by $\eta_S^2w_n$, where $\eta_S$ is the
cutoff defined in Section~\ref{sec:preliminaries}.  Young's inequality on
the cross term gives
\begin{equation}\label{eq:weighted-linear-bound}
 \frac12\int_{\R^N}\eta_S^2\abs{\nabla w_n}^2
 +\int_{\R^N} b_n\eta_S^2w_n^2
 \le C_0\norm{w_n}_2^2
 +2\int_{\R^N}\abs{\nabla\eta_S}^2w_n^2.
\end{equation}

Taking \(S\to\infty\) in \eqref{eq:weighted-linear-bound} and using Fatou's lemma gives, uniformly in \(n\),
\[
\|\nabla w_n\|_2\le C,
\qquad
\int_{\mathbb R^N} b_n w_n^2<\infty.
\]
In particular,
\[
a_n w_n^2=C_0w_n^2-b_nw_n^2\in L^1(\mathbb R^N).
\]
Thus all diagonal terms in the cutoff identity are integrable. We may therefore let
\(S\to\infty\): dominated convergence applies to the diagonal terms, while
\[
\left|\int_{\R^N} 2\eta_S w_n\nabla\eta_S\cdot\nabla w_n\right|
\le \frac{C}{S}\|w_n\|_2\|\nabla w_n\|_2\to0.
\]
Consequently, \eqref{eq:linear-energy-identity} follows.
 
 Splitting that
identity over $B_R$ and its complement yields
\[
 0\le\norm{\nabla w_n}_2^2
 \le C_0\int_{B_R}w_n^2-A\int_{\R^N\setminus B_R}w_n^2.
\]
Since the total $L^2$ mass is one, this implies
\eqref{eq:local-mass-lower}.
\end{proof}

Lemma~\ref{lem:no-loss} will give a nonzero $H^1$ distributional
solution of the limiting equation. We characterize the Gaussian kernel in this class so that its spectral description applies directly to the limit.

\begin{lemma}[Gaussian linearized kernel]\label{lem:gaussian-kernel}
  \textnormal{\cite[Theorem~1.3]{dAveniaMontefuscoSquassina2014}}
  For every $N\ge2$.
  \begin{enumerate}[\rm{(}i\rm{)}]
    \item Let
\[
    L_0
    =
    -\Delta+|x|^2-N-2
\]
be realized through the quadratic form of the harmonic oscillator.
Then
\[
    \ker L_0
    =
    \operatorname{span}
    \{\partial_{x_1}U_0,\ldots,\partial_{x_N}U_0\},
\]
and
\[
    \ker\bigl(L_0|_{L^2_{\rm rad}}\bigr)=\{0\}.
\]
\item Moreover, if $w\in H^1(\mathbb R^N)$ satisfies
\[
    L_0w=0
    \qquad\text{in }\mathcal D'(\mathbb R^N),
\]
then
\[
    w
    \in
    \operatorname{span}
    \{\partial_{x_1}U_0,\ldots,\partial_{x_N}U_0\}.
\]
\item In particular, every radial $H^1$ distributional solution is
identically zero.
  \end{enumerate}
\end{lemma}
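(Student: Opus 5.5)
The plan is to reduce everything to the explicit spectral theory of the harmonic oscillator $H=-\Delta+|x|^2$ on $L^2(\R^N)$. Its form domain is $\mathcal Q=\{w\in H^1(\R^N):|x|w\in L^2(\R^N)\}$, and $C_c^\infty(\R^N)$ is a form core. The normalized Hermite functions $h_k(x)=\prod_{j}H_{k_j}(x_j)e^{-|x|^2/2}$, indexed by multi-indices $k\in\mathbb N_0^N$, form an orthonormal basis of eigenfunctions with $Hh_k=(2|k|+N)h_k$. Hence $L_0=H-(N+2)$ has the eigenvalues $2|k|-2$. Its kernel is exactly the span of the $N$ first-order Hermite functions $x_je^{-|x|^2/2}$, namely $|k|=1$. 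Since $\partial_{x_j}U_0=-x_jU_0=-e^{N/2}x_je^{-|x|^2/2}$, this kernel equals $\operatorname{span}\{\partial_{x_1}U_0,\ldots,\partial_{x_N}U_0\}$, which proves the first claim in (i).

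For the radial statement in (i) and for (iii), I will take an element $\sum_j c_jx_je^{-|x|^2/2}$ of the kernel and assume it is invariant under every $Q\in O(N)$. Applying the reflection $x_i\mapsto -x_i$ changes the sign of the coefficient $c_i$, so $c_i=0$ for every $i$. Thus the radial kernel is trivial. Statement (iii) then follows once (ii) is established, since a radial element of the span must vanish by this argument.

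The substantive step is (ii): a distributional solution $w\in H^1$ of $L_0w=0$ must be shown to lie in $\mathcal Q$ and to be a genuine form-kernel element. I would write the equation as $-\Delta w=a_0w$ with $a_0=N+2-\abs{x}^2$. This coefficient is bounded above by $C_0=N+2$ and satisfies $a_0\le -A$ outside a large ball. So I would rerun the cutoff argument from the proof of Lemma~\ref{lem:no-loss}, with $\norm{w}_2$ in place of $1$. Testing $-\Delta w+b\,w=C_0w$, where $b=C_0-a_0=\abs{x}^2\ge0$, against $\eta_S^2w$ and letting $S\to\infty$ with Fatou's lemma gives $\int\abs{x}^2w^2<\infty$, so $w\in\mathcal Q$. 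Next, for $\varphi\in C_c^\infty$ the identity $\int\nabla w\cdot\nabla\varphi+\int(\abs{x}^2-N-2)w\varphi=0$ holds. Both sides are continuous in $\varphi$ for the form norm, so the identity extends to all $\varphi\in\mathcal Q$ by the core property; the core property itself can be checked by truncating $\eta_S\varphi$ and then mollifying. Hence $w\in D(L_0)$ and $L_0w=0$.

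To conclude, I would take $\varphi=h_k$ in the extended identity, which gives $(2|k|-2)\langle w,h_k\rangle=0$. Therefore every Hermite coefficient of $w$ with $|k|\ne1$ vanishes, and $w$ lies in the claimed span. I expect the main obstacle to be the justification of this integrability and density step, because the coefficient is unbounded below. The rest is classical Hermite expansion.
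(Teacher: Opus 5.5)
Your proof is correct. Part (ii) follows the paper's argument essentially verbatim. You test $-\Delta w+\abs{x}^2w=(N+2)w$ with $\eta_S^2w$ and use Fatou's lemma to get $\abs{x}w\in L^2(\R^N)$. Density of $C_c^\infty(\R^N)$ in the form domain then places $w$ in $D(L_0)$.

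The genuine difference is in (i). The paper imports $\ker L_0=\operatorname{span}\{\partial_{x_1}U_0,\ldots,\partial_{x_N}U_0\}$ directly from \cite[Theorem~1.3]{dAveniaMontefuscoSquassina2014}. You instead derive it from the Hermite eigenbasis of $-\Delta+\abs{x}^2$. This makes the lemma self-contained and visibly valid in every dimension. Your reflection argument also makes explicit the paper's one-line claim that no nonzero element of the span is radial.

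Your route also buys something the paper's does not: the step you flag as the main obstacle becomes unnecessary. Each $h_k$ is a Schwartz function, so $\eta_Sh_k\in C_c^\infty(\R^N)$ is an admissible test function, and
\[
L_0(\eta_Sh_k)=(2\abs{k}-2)\eta_Sh_k-2\nabla\eta_S\cdot\nabla h_k-h_k\Delta\eta_S\longrightarrow(2\abs{k}-2)h_k\quad\text{in }L^2(\R^N)
\]
as $S\to\infty$. Hence $0=\int w\,L_0(\eta_Sh_k)\to(2\abs{k}-2)\langle w,h_k\rangle$ for every $w\in L^2(\R^N)$ with $L_0w=0$ in $\mathcal D'(\R^N)$. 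Completeness of the Hermite functions then gives (ii). You never need $\abs{x}w\in L^2$ or the core property.

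The paper needs the form-domain step only because it treats the kernel characterization as a black-box statement about the operator $L_0$ on $D(L_0)$. With the explicit eigenbasis, you obtain (ii), and hence (iii), under the weaker hypothesis $w\in L^2(\R^N)$.
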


\begin{proof}
The nondegeneracy of the Gaussian ground state is classical;
see, for instance, \cite[Theorem~1.3]{dAveniaMontefuscoSquassina2014}.
More precisely, since
\[
    U_0(x)=e^{\frac N 2}e^{\frac{-|x|^2} 2},
\]
the result in \cite[Theorem~1.3]{dAveniaMontefuscoSquassina2014}
gives
\[
    \ker L_0
    =
    \operatorname{span}
    \{\partial_{x_1}U_0,\ldots,\partial_{x_N}U_0\}.
\]
Since every nonzero element of this space has the form
\[
    -(a\cdot x)U_0(x),
\]
no nonzero element of $\ker L_0$ is radial. Hence
\[
    \ker\bigl(L_0|_{L^2_{\rm rad}}\bigr)=\{0\}.
\]

It remains only to justify that an $H^1$ distributional
solution belongs to the operator realization above.
Let $w\in H^1(\mathbb R^N)$ satisfy $L_0w=0$ in
$\mathcal D'(\mathbb R^N)$. Testing
\[
    -\Delta w+(|x|^2+C)w=(N+2+C)w
\]
with $\eta_R^2w$, where $\eta_R$ is the standard cutoff,
and using Young's inequality gives
\[
    \frac12\int_{\mathbb R^N}\eta_R^2|\nabla w|^2
    +
    \int_{\mathbb R^N}|x|^2\eta_R^2w^2
    \le C\|w\|_2^2
\]
uniformly in $R$. Letting $R\to\infty$ yields
\[
    |x|w\in L^2(\mathbb R^N).
\]
Thus $w$ belongs to the quadratic-form domain of
$-\Delta+|x|^2$. By density of $C_c^\infty(\mathbb R^N)$
in this form domain, the distributional identity extends to
the whole form domain and hence places $w$ in the operator
domain of $L_0$.Hence $w\in D(L_0)$, and the kernel characterization above applies. Therefore
\[
w\in \operatorname{span}\{\partial_{x_1}U_0,\ldots,\partial_{x_N}U_0\},
\]
which completes the proof.
\end{proof}

\begin{proposition}[Small-parameter radial uniqueness]
\label{prop:radial-uniqueness}
There exists $\mu_{\mathrm{uni}}>0$ such that
$\cG_\mu^{\rad}$ contains exactly one element for
$0<\mu<\mu_{\mathrm{uni}}$.
\end{proposition}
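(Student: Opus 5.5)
We argue by contradiction using the normalized-difference method. Existence of at least one element of $\cG_\mu^{\rad}$ is given by Theorem~\ref{thm:existence}, so it suffices to exclude two distinct elements. Suppose there are $\mu_n\downarrow0$ and $u_n,v_n\in\cG_{\mu_n}^{\rad}$ with $u_n\ne v_n$. We set
\[
w_n=\frac{u_n-v_n}{\norm{u_n-v_n}_2}.
\]
This is well defined because $u_n-v_n\in W\subset L^2$ is nonzero. Subtracting the two equations and using the divided differences \eqref{eq:divided-differences}, we get $-\Delta w_n=a_nw_n$ in $\mathcal D'(\R^N)$, where $a_n$ is the coefficient \eqref{eq:difference-coefficient}. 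Both solutions are smooth and positive by Theorem~\ref{thm:existence}, so $a_n$ is continuous and hence lies in $L^\infty_{\loc}$. Also $w_n\in H^1$ and $w_n$ is radial.

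Next we apply Lemma~\ref{lem:difference-coefficients}. Part (ii) gives a uniform upper bound $C_0$ for $a_n$ when $n\ge n_0$. Part (iii), with, say, $A=1$, gives $a_n\le-1$ outside $B_{R_1}$ for $n\ge n_1$. Lemma~\ref{lem:no-loss} then shows that $(w_n)$ is bounded in $H^1$ and that
\[
\int_{B_{R_1}}w_n^2\ge \frac{1}{1+C_0}.
\]
Passing to a subsequence, $w_n\rightharpoonup w$ in $H^1_{\rad}$. Rellich compactness on the bounded ball $B_{R_1}$ gives $w_n\to w$ in $L^2(B_{R_1})$, so
\[
\int_{B_{R_1}}w^2\ge \frac{1}{1+C_0},
\]
and in particular $w\ne0$. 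The limit $w$ is radial, since weak limits of radial functions are radial.

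We then pass to the limit in the equation. For $\varphi\in C_c^\infty(\R^N)$ with support $K$, weak $H^1$ convergence handles the gradient term. For the right-hand side we write
\[
\int a_nw_n\varphi-\int a_0w\varphi=\int (a_n-a_0)w_n\varphi+\int a_0(w_n-w)\varphi.
\]
The first term vanishes by \eqref{eq:coefficient-compact-limit} together with the $L^2$ bound on $w_n$. The second vanishes by weak $L^2$ convergence, since $a_0\varphi\in L^2$. Hence $-\Delta w=(N+2-\abs{x}^2)w$, that is, $L_0w=0$ in $\mathcal D'(\R^N)$, with $w\in H^1(\R^N)$ radial. Lemma~\ref{lem:gaussian-kernel}(iii) forces $w\equiv0$, which is a contradiction. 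Therefore some $\mu_{\mathrm{uni}}>0$ exists below which $\cG_\mu^{\rad}$ contains at most one element, and hence exactly one.

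The main obstacle is preventing $w_n$ from losing all its mass to infinity, since the coefficients $a_n$ are unbounded below and local convergence alone gives no control. This is exactly what Lemma~\ref{lem:no-loss} provides, via the uniform upper bound and the uniform exterior negativity of $a_n$ from Lemma~\ref{lem:difference-coefficients}. Those in turn rest on the uniform $L^\infty$ bound and the uniform far-field smallness of Theorem~\ref{thm:limit}. The remaining care is to check the hypotheses of Lemma~\ref{lem:no-loss}: that $w_n\in H^1_{\loc}\cap L^2$, and that the distributional equation holds at points where $u_n=v_n$. The latter is ensured by the diagonal values in \eqref{eq:divided-differences}.
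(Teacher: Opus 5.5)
Your proposal is correct and follows essentially the same route as the paper: you normalize the difference, use the coefficient bounds of Lemma~\ref{lem:difference-coefficients} to apply Lemma~\ref{lem:no-loss} and keep a fixed amount of $L^2$-mass in a fixed ball, and pass to a nonzero radial $H^1$ solution of $L_0w=0$, which contradicts Lemma~\ref{lem:gaussian-kernel}. The only differences are cosmetic: you treat the term $\int a_0(w_n-w)\varphi$ by weak $L^2$ convergence rather than local strong convergence, and you explicitly cite Theorem~\ref{thm:existence} for nonemptiness, which the paper leaves implicit.
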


\begin{proof}
Otherwise, there exist $\mu_n\downarrow0$ and distinct
$u_n,v_n\in\cG_{\mu_n}^{\rad}$.  Define the $L^2$-normalized
difference
\begin{equation*}
 d_n=\norm{u_n-v_n}_2>0,\qquad
 w_n=\frac{u_n-v_n}{d_n},\qquad \norm{w_n}_2=1.
\end{equation*}
Subtraction of the two equations, with
\eqref{eq:divided-differences} also defining the value at points where
$u_n=v_n$, gives the exact equation
\begin{equation}\label{eq:difference-equation}
 -\Delta w_n=a_nw_n,
\end{equation}
where $a_n$ is defined by \eqref{eq:difference-coefficient}.

Apply Lemma~\ref{lem:difference-coefficients} with a fixed $A>0$, and
 Lemma~\ref{lem:no-loss}.  The sequence is bounded in $H^1$, and, after passing to a subsequence,
\[
w_n\rightharpoonup w \quad\text{in }H^1(\mathbb R^N),
\qquad
w_n\to w \quad\text{strongly in }L^2_{\mathrm{loc}}(\mathbb R^N).
\]
For the fixed ball appearing in Lemma~\ref{lem:no-loss}, \eqref{eq:local-mass-lower} then gives
\[
\int_{B_R}w^2
 =\lim_{n\to\infty}\int_{B_R}w_n^2
 \ge \frac{A}{A+C_0}>0,
\]
so \(w\neq0\).
Estimate \eqref{eq:local-mass-lower} gives
$w\ne0$.
Since both \(u_n\) and \(v_n\) are radial about the origin, each \(w_n\) is radial.
The radial subspace of \(H^1(\mathbb R^N)\) is weakly closed; hence \(w\) is radial.
The local uniform convergence in
\eqref{eq:coefficient-compact-limit} permits passage to the limit in
\eqref{eq:difference-equation}, and
\[
 L_0w=-\Delta w-(N+2-\abs{x}^2)w=0.
\]
This contradicts the radial-kernel statement in
Lemma~\ref{lem:gaussian-kernel}.
\end{proof}

\subsection{Radial and full nondegeneracy}\label{sec:nondegeneracy}

The same compactness argument applies to kernel functions.
Translation invariance gives kernel functions, and we exclude additional modes by imposing orthogonality to them.

For $0<\mu<\mu_{\mathrm{uni}}$, let $u_\mu$ denote the unique member
of $\cG_\mu^{\rad}$.  Define
\begin{equation*}
 a_\mu(x)=\log u_\mu(x)^2+2+\mu p\,u_\mu(x)^{p-1},
 \qquad V_\mu(x)=-a_\mu(x).
\end{equation*}
The real scalar linearized operator, corresponding to the usual \(L_{+,\mu}\), is
\[
L_\mu\phi
 =-\Delta\phi-(\log u_\mu^2+2)\phi-\mu p u_\mu^{p-1}\phi
 =-\Delta\phi+V_\mu\phi.
\]
Since $u_\mu$ is positive and smooth, $V_\mu$ is locally bounded.
Since $u_\mu(x)\to0$, $V_\mu(x)\to+\infty$.  In particular, it is
bounded from below, so
$(V_\mu)_-=\max\{-V_\mu,0\}\in L^\infty(\R^N)$.

The growth of $V_\mu$ leads to a weighted quadratic-form domain. This specifies the operator realization used for the differentiated equation and the full kernel identity.

\begin{definition}[Closed-form realization]\label{def:linearized-form}
Set
\begin{equation*}
 \cQ_\mu=\left\{\phi\in H^1(\R^N):
       \int_{\R^N}(V_\mu)_+\phi^2<\infty\right\}
\end{equation*}
and
\begin{equation*}
 \mathfrak q_\mu[\phi,\psi]
 =\int_{\R^N}\nabla\phi\cdot\nabla\psi+\int_{\R^N} V_\mu\phi\psi,
 \qquad \phi,\psi\in\cQ_\mu.
\end{equation*}
The operator $L_\mu$ is defined by
\begin{align}
 D(L_\mu)=\{\phi\in\cQ_\mu:\;&\text{there is }f\in L^2
 \text{ such that }\mathfrak q_\mu[\phi,\psi]
   =(f,\psi)_{L^2}\notag\\[-1mm]
 &\text{for every }\psi\in\cQ_\mu\},\qquad L_\mu\phi=f.
 \label{eq:operator-domain}
\end{align}
Its radial restriction is the operator associated with the restriction
of the form to $\cQ_\mu\cap L^2_{\rad}$.
\end{definition}

\begin{proposition}[Self-adjoint realization and characterization of the kernel]\label{prop:form-realization}

\label{prop:self-adjoint-kernel}
If $0<\mu<\mu_{\mathrm{uni}}$, and $\cQ_\mu$, $\mathfrak q_\mu$, and $L_\mu$
be defined as in Definition~\ref{def:linearized-form}. Then the following statements hold.

\begin{enumerate}[\rm{(}i\rm{)}]
    \item The quadratic form $\mathfrak q_\mu$ is densely defined, closed, and bounded
    from below on $L^2(\mathbb{R}^N)$. Consequently, the operator $L_\mu$
    defined by \eqref{eq:operator-domain} is self-adjoint.

    \item For $\phi\in L^2(\mathbb{R}^N)\cap H^1_{\mathrm{loc}}(\mathbb{R}^N)$,
    the following are equivalent:
    \begin{equation}\label{eq:weak-operator-kernel}
    \phi\in\ker L_\mu
    \quad\Longleftrightarrow\quad
    -\Delta\phi=a_\mu\phi
    \quad\text{in }\mathcal D'(\mathbb{R}^N).
    \end{equation}
    Moreover, every $L^2$ distributional solution on the right-hand side
    automatically belongs to $D(L_\mu)$, and hence to $\cQ_\mu$.
\end{enumerate}

\end{proposition}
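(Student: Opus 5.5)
For (i), I will write $V_\mu=(V_\mu)_+-(V_\mu)_-$ and use that $(V_\mu)_-\in L^\infty(\R^N)$. This bound holds because $u_\mu$ is bounded, by Theorem~\ref{thm:existence} and the $L^\infty$ estimate, so $a_\mu\le\log\norm{u_\mu}_\infty^2+2+\mu p\norm{u_\mu}_\infty^{p-1}$.

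Density of $\cQ_\mu$ in $L^2$ is immediate, since $C_c^\infty(\R^N)\subset\cQ_\mu$: $V_\mu$ is locally bounded because $u_\mu$ is smooth and positive. The form $\mathfrak q_\mu+\norm{(V_\mu)_-}_\infty+1$ dominates $\norm{\nabla\phi}_2^2+\int(V_\mu)_+\phi^2+\norm{\phi}_2^2$, which gives the lower bound.

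For closedness, take a sequence that is Cauchy in this form norm. It converges in $H^1$ and in the weighted space $L^2((V_\mu)_+dx)$. Extracting an a.e.\ convergent subsequence identifies the two limits, so the limit lies in $\cQ_\mu$. The form is a sum of closed nonnegative forms plus a bounded perturbation, hence closed. Kato's representation theorem then gives that the operator defined by \eqref{eq:operator-domain} is self-adjoint. The radial restriction is handled identically on the closed subspace $L^2_{\rad}$.

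For (ii), the direction ``$\Rightarrow$'' is routine. If $\phi\in\ker L_\mu$, then $\mathfrak q_\mu[\phi,\psi]=0$ for all $\psi\in C_c^\infty\subset\cQ_\mu$, which is exactly $-\Delta\phi=a_\mu\phi$ in $\mathcal D'$.

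For ``$\Leftarrow$'' I will repeat the cutoff argument of Lemma~\ref{lem:no-loss}, with $C_0:=\sup a_\mu<\infty$ and $b=C_0-a_\mu\ge0$. Testing with $\eta_S^2\phi$ is legitimate because $\phi\in H^1_{\loc}$ and $a_\mu\in L^\infty_{\loc}$. Letting $S\to\infty$ with Fatou's lemma gives $\nabla\phi\in L^2$ and $\int b\phi^2<\infty$. Since $(V_\mu)_+\le b+C$, we get $\int(V_\mu)_+\phi^2<\infty$, so $\phi\in\cQ_\mu$.

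The step I expect to be the real obstacle is extending $\mathfrak q_\mu[\phi,\psi]=0$ from $\psi\in C_c^\infty$ to all $\psi\in\cQ_\mu$. I would first truncate: for $\psi\in\cQ_\mu$, the functions $\eta_S\psi$ converge to $\psi$ in the form norm, since $\abs{\nabla\eta_S}\le C/S$ and dominated convergence applies to the weighted term. Each $\eta_S\psi$ has compact support, and on its support $V_\mu$ is bounded. Mollifying $\eta_S\psi$ then yields $C_c^\infty$ approximants in $H^1$, and on a fixed compact set $H^1$ convergence implies convergence of the weighted term. So $C_c^\infty$ is a core for $\mathfrak q_\mu$.

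Once that density is in hand, continuity of $\psi\mapsto\mathfrak q_\mu[\phi,\psi]$ in the form norm, which holds because $\phi\in\cQ_\mu$, gives $\mathfrak q_\mu[\phi,\psi]=0=(0,\psi)_{L^2}$ for all $\psi\in\cQ_\mu$. Therefore $\phi\in D(L_\mu)$ with $L_\mu\phi=0$. This also gives the final assertion that every $L^2$ distributional solution lies in $D(L_\mu)\subset\cQ_\mu$.
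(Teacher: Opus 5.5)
Your proposal is correct and follows essentially the same route as the paper: the form norm with $V_\mu$ shifted to be $\ge1$, the Cauchy-sequence closedness argument with an a.e.\ subsequence, the $\eta_S^2\phi$ cutoff with Fatou to place a distributional solution in $\cQ_\mu$, and truncation plus mollification to make $C_c^\infty(\R^N)$ dense in the form norm, so that $\mathfrak q_\mu[\phi,\psi]=0$ extends to all $\psi\in\cQ_\mu$. One small difference: for the final claim about an arbitrary $L^2$ distributional solution, the paper first uses interior elliptic regularity ($\Delta\phi=-a_\mu\phi\in L^2_{\loc}$ gives $\phi\in H^2_{\loc}$) to obtain the $H^1_{\loc}$ membership that you take as given.
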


\begin{proof}
Since $V_\mu$ is bounded from below, choose $C_\mu>0$ such that
\[
    V_\mu+C_\mu\ge1
    \qquad\text{in }\mathbb R^N.
\]
Define
\begin{equation}
    \|\phi\|_{\cQ_\mu}^2
    :=
    \|\nabla\phi\|_2^2
    +
    \int_{\mathbb R^N}(V_\mu+C_\mu)\phi^2\,dx.
    \label{eq:form-norm}
\end{equation}
Since $V_\mu+C_\mu\ge1$, this norm controls the $H^1$-norm.

We first show that $\cQ_\mu$ is complete.
Let $(\phi_n)$ be Cauchy with respect to
\eqref{eq:form-norm}. Then $(\phi_n)$ is Cauchy in
$H^1(\mathbb R^N)$, while
\[
    (V_\mu+C_\mu)^{\frac1 2}\phi_n
\]
is Cauchy in $L^2(\mathbb R^N)$.
Hence, for some $\phi\in H^1(\mathbb R^N)$ and
$g\in L^2(\mathbb R^N)$,
\[
    \phi_n\to\phi
    \quad\text{in }H^1,
\]
and
\[
    (V_\mu+C_\mu)^{\frac1 2}\phi_n\to g
    \quad\text{in }L^2.
\]
Passing to a common subsequence converging almost everywhere
shows that
\[
    g=(V_\mu+C_\mu)^{\frac1 2}\phi
    \quad\text{a.e.}
\]
Thus $\phi\in \cQ_\mu$ and
$\phi_n\to\phi$ in the norm \eqref{eq:form-norm}.
Therefore $\cQ_\mu$ is complete.

Moreover,
\[
\begin{aligned}
    \mathfrak q_\mu[\phi,\phi]
    &=
    \|\nabla\phi\|_2^2
    +
    \int_{\mathbb R^N}(V_\mu)_+\phi^2\,dx
    -
    \int_{\mathbb R^N}(V_\mu)_-\phi^2\,dx \\
    &\ge
    -\|(V_\mu)_-\|_\infty\|\phi\|_2^2,
\end{aligned}
\]
so $\mathfrak q_\mu$ is bounded from below. In addition,
\[
    \mathfrak q_\mu[\phi,\phi]
    +
    C_\mu\|\phi\|_2^2
    =
    \|\phi\|_{\cQ_\mu}^2,
\]
and hence the completeness established above shows that
$\mathfrak q_\mu$ is closed.

We next prove density. Let $\eta_R$ be the cutoff introduced
in Section~2.2. For $\phi\in \cQ_\mu$,
\[
    \eta_R\phi\to\phi
    \qquad\text{in }{\cQ_\mu}
\]
as $R\to\infty$. Indeed, dominated convergence applies to the
weighted term and to $(1-\eta_R)\nabla\phi$, while
\[
    \|\phi\nabla\eta_R\|_2
    \le
    \frac{C}{R}\|\phi\|_2
    \longrightarrow0.
\]
For fixed $R$, standard mollification of $\eta_R\phi$ converges
in $H^1$. Since the mollified functions are supported in a
fixed compact set on which $V_\mu$ is bounded, the convergence
also holds in the weighted part of \eqref{eq:form-norm}.
Therefore
\[
    C_c^\infty(\mathbb R^N)
\]
is dense in $\cQ_\mu$, and hence $\mathfrak q_\mu$ is densely defined on
$L^2(\mathbb R^N)$.

The first representation theorem for closed, lower-bounded
quadratic forms \cite[Chapter~VI]{Kato1995} therefore yields the self-adjoint operator
associated with $\mathfrak q_\mu$, which is precisely the operator
$L_\mu$ defined in \eqref{eq:operator-domain}. This proves (i).

We now prove (ii). Let $\phi\in L^2(\mathbb R^N)$ be a
distributional solution of
\[
    -\Delta\phi=a_\mu\phi.
\]
Since $a_\mu\in L^\infty_{\rm loc}(\mathbb R^N)$,
interior elliptic regularity gives
\[
    \phi\in H^2_{\rm loc}(\mathbb R^N)
    \subset H^1_{\rm loc}(\mathbb R^N).
\]

Since $a_\mu$ is bounded from above, choose $C>0$ such that
\[
    b:=C-a_\mu\ge1.
\]
Then
\[
    -\Delta\phi+b\phi=C\phi
    \qquad\text{in }\mathcal D'(\mathbb R^N).
\]
Testing this equation with $\eta_R^2\phi$, justified by the
standard local $H^1$-approximation argument, and applying
Young's inequality to the cross term gives
\[
    \frac12
    \int_{\mathbb R^N}\eta_R^2|\nabla\phi|^2\,dx
    +
    \int_{\mathbb R^N}b\eta_R^2\phi^2\,dx
    \le
    C'\|\phi\|_2^2,
    \qquad R\ge1,
\]
where $C'$ is independent of $R$.
Letting $R\to\infty$ and using Fatou's lemma, we obtain
\[
    \phi\in H^1(\mathbb R^N),
    \qquad
    b^{\frac1 2}\phi\in L^2(\mathbb R^N).
\]
Since
\[
    V_\mu=b-C,
    \qquad
    (V_\mu)_+\le b,
\]
it follows that
\[
    \int_{\mathbb R^N}(V_\mu)_+\phi^2\,dx<\infty.
\]
Hence $\phi\in \cQ_\mu$.

The distributional identity gives
\[
    \mathfrak q_\mu[\phi,\psi]=0
    \qquad
    \text{for every }
    \psi\in C_c^\infty(\mathbb R^N).
\]
By the density of $C_c^\infty(\mathbb R^N)$ in $\cQ_\mu$,
this identity extends to every $\psi\in \cQ_\mu$.
By the definition of $D(L_\mu)$, we conclude that
\[
    \phi\in D(L_\mu),
    \qquad
    L_\mu\phi=0.
\]
Thus $\phi\in\ker L_\mu$.

Conversely, if $\phi\in\ker L_\mu$, then
\[
    \mathfrak q_\mu[\phi,\psi]=0
    \qquad
    \text{for every }\psi\in \cQ_\mu.
\]
Since $C_c^\infty(\mathbb R^N)\subset \cQ_\mu$, testing with
compactly supported smooth functions gives
\[
    -\Delta\phi=a_\mu\phi
    \qquad\text{in }\mathcal D'(\mathbb R^N).
\]
This proves \eqref{eq:weak-operator-kernel}.
\end{proof}

\begin{lemma}[Translation modes]\label{lem:translation-modes}
For every $0<\mu<\mu_{\mathrm{uni}}$, we have
\begin{equation*}
 \partial_{x_j}u_\mu\in\ker L_\mu\qquad(1\le j\le N),
\end{equation*}
and these $N$ functions are linearly independent.
\end{lemma}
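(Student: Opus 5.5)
The plan has three steps: show that each $\partial_{x_j}u_\mu$ is an $L^2$ distributional solution of the linearized equation, transfer this to $\ker L_\mu$ through Proposition~\ref{prop:self-adjoint-kernel}(ii), and then prove linear independence from the radial structure.

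\emph{Step 1: the differentiated equation.} Since $u_\mu$ is smooth and positive (Proposition~\ref{prop:regularity}), the equation $-\Delta u_\mu=u_\mu\log u_\mu^2+\mu u_\mu^p$ holds pointwise. The nonlinearity $s\mapsto s\log s^2+\mu s^p$ is smooth on $(0,\infty)$, so we may differentiate in $x_j$. With $\phi_j=\partial_{x_j}u_\mu$ this gives
\[
 -\Delta\phi_j=\bigl(\log u_\mu^2+2+\mu p\,u_\mu^{p-1}\bigr)\phi_j=a_\mu\phi_j
\]
classically, and hence in $\mathcal D'(\R^N)$. The function $\phi_j$ is smooth, so it lies in $H^1_{\loc}(\R^N)$. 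It also lies in $L^2(\R^N)$ because $u_\mu\in H^1(\R^N)$.

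\emph{Step 2: membership in the kernel.} Proposition~\ref{prop:self-adjoint-kernel}(ii) now applies directly. It yields $\phi_j\in D(L_\mu)\subset\cQ_\mu$ and $L_\mu\phi_j=0$. I expect no genuine obstacle here, because that proposition already handles the potential $V_\mu$, which is unbounded above: the weighted integrability is derived there from the distributional equation, not assumed.

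\emph{Step 3: linear independence.} By Theorem~\ref{thm:existence}, $u_\mu(x)=U(\abs{x})$ with $U'(r)<0$ for all $r>0$. Hence, for $x\ne0$,
\[
 \partial_{x_j}u_\mu(x)=U'(\abs{x})\frac{x_j}{\abs{x}}.
\]
Suppose $\sum_j c_j\partial_{x_j}u_\mu\equiv0$. Then $U'(\abs{x})\,\frac{c\cdot x}{\abs{x}}=0$ for every $x\ne0$. Taking $x=c$ when $c\neq0$ gives $U'(\abs{c})\abs{c}=0$. This contradicts $U'<0$, so $c=0$.

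The only mildly delicate point is the strict inequality $U'<0$, which rules out that some $\partial_{x_j}u_\mu$ vanishes identically. This is exactly what Proposition~\ref{prop:strict-decrease} provides.
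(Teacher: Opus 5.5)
Your proposal is correct and follows the paper's proof essentially step for step. You differentiate the smooth equation, use $u_\mu\in H^1$ for $L^2$ membership and Proposition~\ref{prop:form-realization}(ii) to place $\partial_{x_j}u_\mu$ in $\ker L_\mu$, and obtain independence from $\partial_{x_j}u_\mu=U'(\abs{x})x_j/\abs{x}$ with $U'<0$. Testing at $x=c$ is a clean way to finish, and since $u_\mu$ is radial about the origin, its unique maximum point, the center in Theorem~\ref{thm:existence}, must be the origin, so no translation is needed.
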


\begin{proof}
Local differentiation of the smooth equation gives
$-\Delta(\partial_{x_j}u_\mu)=a_\mu\partial_{x_j}u_\mu$.
The derivative lies in $L^2$ because $u_\mu\in H^1$, and
Proposition~\ref{prop:form-realization} shows that it belongs to the operator kernel.  Since $u_\mu(x)=U_\mu(r)$ and
$U_\mu'(r)<0$ for $r>0$,
\[
 \partial_{x_j}u_\mu=U_\mu'(r)\frac{x_j}{r}.
\]
A vanishing linear combination would imply
$U_\mu'(r)\frac{c\cdot x}{r}=0$ for every $x\ne0$, which forces $c=0$.
\end{proof}

\begin{proposition}[Absence of additional kernel modes]
\label{prop:no-extra-kernel}
There exists $\mu_{\mathrm{nd}}\in(0,\mu_{\mathrm{uni}}]$ such that
\begin{equation*}
 \ker L_\mu
 =\operatorname{span}\{\partial_{x_1}u_\mu,\ldots,
       \partial_{x_N}u_\mu\}
\end{equation*}
for every $0<\mu<\mu_{\mathrm{nd}}$.
\end{proposition}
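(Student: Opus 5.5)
We argue by contradiction, following the normalized-difference scheme of Proposition~\ref{prop:radial-uniqueness}. By Lemma~\ref{lem:translation-modes} the span of the translation modes is always contained in $\ker L_\mu$, so only the reverse inclusion needs proof. If the reverse inclusion fails for arbitrarily small $\mu$, then there are $\mu_n\downarrow0$ with $\mu_n<\mu_{\mathrm{uni}}$ and elements of $\ker L_{\mu_n}$ lying outside $E_n:=\operatorname{span}\{\partial_{x_j}u_{\mu_n}\}$.

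\emph{Choice of the kernel sequence.} Since $\ker L_{\mu_n}$ is a closed subspace of $L^2$ containing the finite-dimensional space $E_n$, we may take $\phi_n\in\ker L_{\mu_n}$ that is $L^2$-orthogonal to $E_n$ and satisfies $\norm{\phi_n}_2=1$. By Proposition~\ref{prop:form-realization}(ii), $\phi_n\in H^1_{\loc}$ solves $-\Delta\phi_n=a_{\mu_n}\phi_n$ in $\mathcal D'$.

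\emph{Coefficient control and nonvanishing.} The coefficient $a_{\mu_n}$ is exactly $a_n$ from \eqref{eq:difference-coefficient} with $u_n=v_n=u_{\mu_n}$, because the divided differences reduce to their diagonal values. Hence Lemma~\ref{lem:difference-coefficients} applies. It gives the uniform upper bound $C_0$, the tail bound $a_{\mu_n}\le -A$ on $\{\abs{x}\ge R_A\}$ for a fixed $A>0$ and all large $n$, and local uniform convergence to $N+2-\abs{x}^2$.

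Lemma~\ref{lem:no-loss} then yields a uniform $H^1$ bound and $\int_{B_{R_A}}\phi_n^2\ge \frac{A}{A+C_0}$. Extract a subsequence with $\phi_n\rightharpoonup\phi$ in $H^1$ and $\phi_n\to\phi$ in $L^2_{\loc}$. Then $\phi\neq0$, and passing to the limit against test functions (the coefficients converge uniformly on compact sets) gives $L_0\phi=0$ in $\mathcal D'$. By Lemma~\ref{lem:gaussian-kernel}(ii), $\phi=\sum_j c_j\partial_{x_j}U_0$ with $c\neq0$.

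\emph{Passing orthogonality to the limit.} This is the step that needs care, since $\phi_n$ converges only weakly. The key fact is that $\partial_{x_j}u_{\mu_n}\to\partial_{x_j}U_0$ strongly in $L^2$, which follows from $u_{\mu_n}\to U_0$ in $H^1$ (Proposition~\ref{prop:every-selection}). Weak convergence of $\phi_n$ in $L^2$ combined with this strong convergence gives
\[
0=(\phi_n,\partial_{x_j}u_{\mu_n})_{L^2}\to(\phi,\partial_{x_j}U_0)_{L^2}.
\]
So $\phi$ is orthogonal to every $\partial_{x_j}U_0$, and therefore orthogonal to itself, i.e.\ $\phi=0$. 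This contradicts $\phi\ne0$.

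Thus for all sufficiently small $\mu$ the kernel equals the span of the translation modes, which gives $\mu_{\mathrm{nd}}\in(0,\mu_{\mathrm{uni}}]$. The most delicate point is the nonvanishing, and it is already handled by Lemma~\ref{lem:no-loss}. This works because the tail bound is applied only along the single sequence $u_{\mu_n}$, which lies in $\cG_{\mu_n}^{\rad}$ and so falls under the uniform far-field estimate of Theorem~\ref{thm:limit}.
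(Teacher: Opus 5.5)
Your proposal is correct and follows essentially the same route as the paper. The paper also argues by contradiction with $L^2$-normalized kernel functions orthogonal to the translation modes, uses the diagonal case of Lemma~\ref{lem:difference-coefficients} and Lemma~\ref{lem:no-loss} to prevent local vanishing, identifies the nonzero limit in $\ker L_0$ via Lemma~\ref{lem:gaussian-kernel}(ii), and passes the orthogonality to the limit using the strong $H^1$ convergence $u_{\mu_n}\to U_0$. The only difference is cosmetic: you quote Lemma~\ref{lem:difference-coefficients} directly, discarding finitely many indices, whereas the paper rederives the bounds $a_n\le C_0$ and $a_n\le -A$ outside a fixed ball from the uniform $L^\infty$ and algebraic decay estimates.
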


\begin{proof}
Fix $N$ and $p$ as in the hypotheses, and let $\mu_1>0$ be as in
Propositions~\ref{prop:uniform-local} and \ref{prop:uniform-tail}.
By Lemma~\ref{lem:translation-modes},
\[
 \operatorname{span}\{\partial_{x_1}u_\mu,\ldots,
                       \partial_{x_N}u_\mu\}\subset\ker L_\mu
 \qquad(0<\mu<\mu_{\mathrm{uni}}).
\]
Suppose that the asserted equality does not hold throughout any
sufficiently small parameter interval. We may then recursively choose
a strictly decreasing sequence $\mu_n\downarrow0$, with
$0<\mu_n<\min\{\mu_{\mathrm{uni}},\mu_1,\frac{1}{n}\}$, such that, on writing
$u_n=u_{\mu_n}$,
\[
 T_n:=\operatorname{span}\{\partial_{x_1}u_n,\ldots,
                          \partial_{x_N}u_n\}
 \subsetneq\ker L_{\mu_n}.
\]
The ground state $u_n$ is well defined by
Proposition~\ref{prop:radial-uniqueness}.
The $N$ translation modes are linearly independent by
Lemma~\ref{lem:translation-modes}, so $T_n$ is a finite-dimensional
closed subspace of $L^2(\R^N)$. Let $P_n$ be the $L^2$-orthogonal
projection onto $T_n$, and choose
$\psi_n\in\ker L_{\mu_n}\setminus T_n$.
Both $\psi_n$ and $P_n\psi_n$ belong to the operator domain and to
its kernel. Since the domain is linear,
\[
 \phi_n:=\frac{\psi_n-P_n\psi_n}{\|\psi_n-P_n\psi_n\|_2}
 \in\ker L_{\mu_n}\cap T_n^\perp,
 \qquad \|\phi_n\|_2=1.
\]
The denominator is positive because $\psi_n\notin T_n$.
In particular,
\[
 \int_{\R^N}\phi_n\partial_{x_j}u_n\,dx=0,
 \qquad j=1,\ldots,N.
\]
By Definition~\ref{def:linearized-form},
$\phi_n\in D(L_{\mu_n})\subset\cQ_{\mu_n}\subset H^1(\R^N)$.
Proposition~\ref{prop:form-realization}{\rm(ii)} gives
\[
 -\Delta\phi_n=a_n\phi_n\quad\text{in }\mathcal D'(\R^N),
 \qquad
 a_n:=\log u_n^2+2+\mu_np\,u_n^{p-1}.
\]
The positivity and smoothness of $u_n$, supplied by
Theorem~\ref{thm:existence}, imply
$a_n\in L^\infty_{\loc}(\R^N)$.

\smallskip

The coefficients above are the diagonal case of
Lemma~\ref{lem:difference-coefficients}, since
\eqref{eq:divided-differences} gives
\[
 a_n=D_{\log}(u_n,u_n)+\mu_nD_p(u_n,u_n).
\]
We record explicitly the bounds needed to apply
Lemma~\ref{lem:no-loss}, including their uniformity in $n$.
Proposition~\ref{prop:uniform-local} gives a constant $M\ge1$, independent
of $n$, such that $0<u_n(x)\le M$ for every $x\in\R^N$.
For a nonempty compact set $K\subset\R^N$, set
$m_K=\min_KU_0>0$. The local convergence in that proposition gives
$u_n\to U_0$ uniformly on $K$, and hence $u_n\ge \frac{m_K}{2}$ on $K$
for all sufficiently large $n$. Since the derivative of
$s\mapsto\log s^2$ is $\frac{2}{s}$, the mean-value theorem yields
\[
 \begin{aligned}
 \|a_n-(\log U_0^2+2)\|_{L^\infty(K)}
 &\le \frac{4}{m_K}\|u_n-U_0\|_{L^\infty(K)}
       +\mu_npM^{p-1}
 \longrightarrow0.
 \end{aligned}
\]
Here $m_K$ is fixed before $n$ tends to infinity.
Moreover, monotonicity of $s\mapsto\log s^2$ and of
$s\mapsto s^{p-1}$ on $(0,\infty)$ gives
\[
 a_n(x)\le\log M^2+2+\mu_1pM^{p-1}=:C_0>0
 \qquad(x\in\R^N, n\ge1).
\]
Since $\log U_0^2=N-|x|^2$, we have proved
\begin{equation}\label{eq:kernel-coefficient-limits}
 a_n\longrightarrow a_0:=N+2-|x|^2
 \quad\text{locally uniformly in }\R^N,
 \qquad a_n\le C_0.
\end{equation}

Fix $A>0$ once and for all. Because $p>1$, we can choose
$\delta\in(0,1)$ so small that
\[
 \log\delta^2+2+\mu_1p\delta^{p-1}\le-A.
\]
Indeed, the logarithmic term tends to $-\infty$ as
$\delta\downarrow0$, while $\delta^{p-1}\to0$.
By Proposition~\ref{prop:uniform-tail}, specifically
\eqref{eq:uniform-algebraic-main}, there is $C_{\mathrm{rad}}>0$,
independent of $n$, such that
\[
 u_n(x)\le C_{\mathrm{rad}}|x|^{-\frac{N-1}{2}}
 \qquad(|x|\ge1).
\]
As $N\ge2$, choose $R\ge1$ with
$C_{\mathrm{rad}}R^{-\frac{N-1}{2}}\le\delta$.
Then $u_n(x)\le\delta$ for $|x|\ge R$, and therefore
\begin{equation}\label{eq:kernel-uniform-negative-tail}
 a_n(x)\le\log\delta^2+2+\mu_np\delta^{p-1}
 \le-A\qquad(|x|\ge R, n\ge1).
\end{equation}
Thus $C_0$, $A$, and $R$ can all be fixed independently of the
sequence index and of the kernel functions $\phi_n$.

\smallskip
We have checked every hypothesis of Lemma~\ref{lem:no-loss}:
$\phi_n\in L^2\cap H^1_{\loc}$, $\|\phi_n\|_2=1$,
$a_n\in L^\infty_{\loc}$, the distributional equation holds, and
\eqref{eq:kernel-coefficient-limits} and
\eqref{eq:kernel-uniform-negative-tail} give the two bounds with
fixed constants. Its cutoff argument establishes
$a_n\phi_n^2\in L^1(\R^N)$ even though $a_n$ may be unbounded below.
The energy identity \eqref{eq:linear-energy-identity} therefore gives
\begin{equation}\label{eq:kernel-uniform-H1}
 0\le\|\nabla\phi_n\|_2^2
 =\int_{\R^N}a_n\phi_n^2\,dx
 \le C_0\|\phi_n\|_2^2=C_0,
 \qquad \|\phi_n\|_{H^1}^2\le1+C_0.
\end{equation}
For completeness, put $b_n=\int_{B_R}\phi_n^2\,dx$.
Splitting this energy identity over $B_R$ and its complement gives
\[
 0\le\|\nabla\phi_n\|_2^2
 \le C_0b_n-A(1-b_n)=(C_0+A)b_n-A.
\]
Here $1-b_n=\int_{\R^N\setminus B_R}\phi_n^2\,dx$ by the normalization.
Rearranging recovers the lower bound
\eqref{eq:local-mass-lower}:
\begin{equation}\label{eq:kernel-fixed-local-mass}
 \int_{B_R}\phi_n^2\,dx\ge\frac{A}{A+C_0}>0
 \qquad(n\ge1).
\end{equation}

The bound \eqref{eq:kernel-uniform-H1} and weak compactness of the
Hilbert space $H^1(\R^N)$ give a subsequence and
$\phi\in H^1(\R^N)$ such that $\phi_n\rightharpoonup\phi$ in $H^1$.
For each positive integer $m$, the restrictions to $B_m$ are bounded
in $H^1(B_m)$, and the Rellich--Kondrachov theorem applies on this
bounded smooth domain to give compactness in $L^2(B_m)$.
A diagonal extraction, with the local limits identified by the
weak $H^1$ limit, therefore gives
\[
 \phi_n\rightharpoonup\phi\quad\text{in }H^1(\R^N),
 \qquad
 \phi_n\to\phi\quad\text{in }L^2_{\loc}(\R^N).
\]
The continuous embedding $H^1(\R^N)\hookrightarrow L^2(\R^N)$
also gives weak convergence in $L^2(\R^N)$ and
$\|\phi\|_2\le1$ by weak lower semicontinuity.
In particular, strong convergence on any $B_m$ with $m>R$ implies
strong convergence on $B_R$, and
\begin{equation}\label{eq:kernel-limit-nonzero}
 \int_{B_R}\phi^2\,dx
 =\lim_{n\to\infty}\int_{B_R}\phi_n^2\,dx
 \ge\frac{A}{A+C_0}>0.
\end{equation}
Thus the global $H^1$ weak limit is nonzero.

\smallskip

Let $\zeta\in C_c^\infty(\R^N)$ and set
$K=\operatorname{supp}\zeta$. The distributional equation gives
\[
 \int_{\R^N}\nabla\phi_n\cdot\nabla\zeta\,dx
 =\int_{\R^N}a_n\phi_n\zeta\,dx.
\]
The left-hand side converges to
$\int_{\R^N}\nabla\phi\cdot\nabla\zeta\,dx$ by weak $H^1$
convergence. For the right-hand side, the decomposition
\[
 a_n\phi_n-a_0\phi
 =(a_n-a_0)\phi_n+a_0(\phi_n-\phi)
\]
and the Cauchy--Schwarz inequality give
\[
 \begin{aligned}
 \left|\int_{\R^N}(a_n\phi_n-a_0\phi)\zeta\,dx\right|
 &\le \|a_n-a_0\|_{L^\infty(K)}
        \|\phi_n\|_{L^2(K)}\|\zeta\|_2\\
 &\quad+\|a_0\|_{L^\infty(K)}
        \|\phi_n-\phi\|_{L^2(K)}\|\zeta\|_2
 \longrightarrow0.
 \end{aligned}
\]
Indeed, the first coefficient tends to zero by
\eqref{eq:kernel-coefficient-limits}, $\|\phi_n\|_{L^2(K)}\le1$,
$a_0$ is bounded on $K$, and the second $L^2(K)$ difference tends
to zero by local strong convergence. Consequently,
\[
 \int_{\R^N}\nabla\phi\cdot\nabla\zeta\,dx
 =\int_{\R^N}(N+2-|x|^2)\phi\zeta\,dx
 \qquad(\zeta\in C_c^\infty(\R^N)),
\]
that is,
\[
 L_0\phi=-\Delta\phi+(|x|^2-N-2)\phi=0
 \qquad\text{in }\mathcal D'(\R^N).
\]
We now use Lemma~\ref{lem:gaussian-kernel}{\rm(ii)}, whose hypotheses
are precisely global $H^1$ membership and this distributional
identity. Its form-domain argument places $\phi$ in the operator
kernel, and the kernel characterization gives
\begin{equation}\label{eq:kernel-limit-span}
 \phi\in\ker L_0
 =\operatorname{span}\{\partial_{x_1}U_0,\ldots,
                       \partial_{x_N}U_0\}.
\end{equation}

\smallskip

Since $\mu_n\downarrow0$ and $u_n\in\cG_{\mu_n}^{\rad}$,
Proposition~\ref{prop:every-selection}, specifically
\eqref{eq:sequence-W-convergence}, gives $u_n\to U_0$ strongly in $W$.
The norm in Definition~\ref{def:energy-space} controls the $H^1$
norm, so, for each $j=1,\ldots,N$,
\[
 \|\partial_{x_j}u_n-\partial_{x_j}U_0\|_2
 \le\|u_n-U_0\|_{H^1}\le\|u_n-U_0\|_W\longrightarrow0.
\]
Using $\|\phi_n\|_2=1$, the Cauchy--Schwarz inequality, and weak
$L^2$ convergence against the fixed function
$\partial_{x_j}U_0\in L^2(\R^N)$, we obtain
\[
 \begin{aligned}
 &\left|\int_{\R^N}\phi_n\partial_{x_j}u_n\,dx
          -\int_{\R^N}\phi\partial_{x_j}U_0\,dx\right|\\
 &\quad\le\|\phi_n\|_2
             \|\partial_{x_j}u_n-\partial_{x_j}U_0\|_2
       +\left|\int_{\R^N}(\phi_n-\phi)\partial_{x_j}U_0\,dx\right|
 \longrightarrow0.
 \end{aligned}
\]
By the preceding argument, the first integral on the left vanishes for every $n$.
Therefore
\[
 \int_{\R^N}\phi\partial_{x_j}U_0\,dx=0,
 \qquad j=1,\ldots,N.
\]
By \eqref{eq:kernel-limit-span}, there are real numbers
$c_1,\ldots,c_N$ with $\phi=\sum_{j=1}^Nc_j\partial_{x_j}U_0$.
Taking the $L^2$ inner product with $\phi$ gives
\[
 \|\phi\|_2^2
 =\sum_{j=1}^Nc_j\int_{\R^N}\phi\partial_{x_j}U_0\,dx=0,
\]
which contradicts \eqref{eq:kernel-limit-nonzero}.
This contradiction excludes failure at arbitrarily small positive
parameters. Hence there exists
$\mu_{\mathrm{nd}}\in(0,\min\{\mu_{\mathrm{uni}},\mu_1\}]$
such that the asserted full kernel identity holds for every
$0<\mu<\mu_{\mathrm{nd}}$.
\end{proof}

\begin{remark}[Spherical-harmonic interpretation]\label{cor:spherical-modes}
The radial potential $V_\mu$ ensures that the closed form decomposes into orthogonal spherical-harmonic sectors. For $0<\mu<\mu_{\mathrm{nd}}$,
Proposition~\ref{prop:no-extra-kernel} identifies the kernel with the
translation derivatives $U_\mu'(r)\frac{x_j}{r}$, whose angular factors are spherical harmonics of degree one. Hence the degree-one kernel consists exactly of these
translations, while the radial sector and every sector of degree
$\ell\ge2$ have zero kernel.
\end{remark}

\subsection{Uniqueness of positive ground states up to translations}
\label{subsec:uniqueness-translations}

\begin{proof}[Proof of Theorem~\ref{thm:nondegeneracy}]
Take $\mu_0=\mu_{\mathrm{nd}}\le\mu_{\mathrm{uni}}$ and fix
$0<\mu<\mu_0$. Proposition~\ref{prop:radial-uniqueness} gives the unique centered radial ground state $u_\mu$.
For $u\in\cG_\mu^+$, Theorem~\ref{thm:existence} gives a center
$x_0$ such that $v(x)=u(x+x_0)$ is positive and radial about the origin.

Translation invariance preserves the space \(W\), the equation, and the action.
Hence \(v\) is again a positive ground state and, by construction, is radial about the origin;
thus \(v\in \cG_\mu^{\mathrm{rad}}\). Proposition~\ref{prop:radial-uniqueness} therefore gives \(v=u_\mu\).
Conversely, every translate of \(u_\mu\) is a positive ground state.
This proves \eqref{eq:ground-state-orbit}.

Proposition~\ref{prop:no-extra-kernel} gives
\[
\ker L_\mu
 =\operatorname{span}\{\partial_{x_1}u_\mu,\ldots,\partial_{x_N}u_\mu\}.
\]
Since
\[
\sum_{j=1}^N a_j\partial_{x_j}u_\mu
 =U_\mu'(r)\frac{a\cdot x}{r},
\]
and \(U_\mu'(r)<0\) for \(r>0\), no nonzero element of this span is radial.
Consequently,
\[
\ker\bigl(L_\mu|_{L^2_{\mathrm{rad}}}\bigr)=\{0\}.
\]
\end{proof}

\section*{Acknowledgments}
This work was partially supported by the National Natural Science Foundation of China (Grant No.~11871389), the Fundamental Research Funds for the Central Universities (Grant No.~xzy012026055), and the Shaanxi Fundamental Science Research Project for Mathematics and Physics (Grant No.~22JSY032).

\section*{Declarations}

\noindent\textbf{Data Availability.}
The manuscript has no associated data.

\medskip

\noindent\textbf{Conflict of interest.}
The authors declare that they have no conflict of interest.

\appendix
\refstepcounter{section}
\section*{Appendix}\label{sec:appendix}
The following estimates give the uniform $L^\infty$ bound and the
H\"older control at infinity used in Section~\ref{sec:limit}.

\begin{lemma}[Uniform Moser iteration]\label{lem:moser-iteration}

Fix \(\bar\mu>0\) and \(M_H>0\). Then there exists
\(C=C(N,p,\bar\mu,M_H)>0\) such that every nonnegative weak solution
\(u\in W\) of \eqref{eq:main} satisfying
\[
0\le\mu\le\bar\mu,
\qquad
\|u\|_{H^1}\le M_H,
\]
also satisfies
\begin{equation}\label{eq:Moser-conclusion}
 \norm{u}_\infty\le C.
\end{equation}

\end{lemma}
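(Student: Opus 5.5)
The plan is a standard Moser iteration for the equation $-\Delta u=u\log u^2+\mu u^p$, in which the logarithm is absorbed into a subcritical power. For $u\ge0$, estimate \eqref{eq:positive-log-growth}, or directly $s\log s^2\le C_r s^{r-1}$ for $s\ge1$ together with $s\log s^2\le 0$ for $s\le1$, gives
\[
 f(u):=u\log u^2+\mu u^p\le C(1+\bar\mu)\,\bigl(u+u^{q-1}\bigr),
\]
where we fix $q:=\max\{p+1,r\}\in(2,2^*)$ with $r\in(2,2^*)$. Only the upper bound on the right-hand side matters, because the test functions will be nonnegative. Hence $u$ is a nonnegative weak subsolution of $-\Delta u\le c(x)u$ with $c(x)=C(1+u^{q-2})$. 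Since $q-2<2^*-2$, the function $u^{q-2}$ lies in $L^{N/2+\varepsilon}$ locally in a uniform sense when $N\ge3$. More precisely, $u^{q-2}\in L^{t}$ with $t=2^*/(q-2)>N/2$, and $\|u^{q-2}\|_{t}\le C(M_H)$ by Sobolev. When $N=2$ any finite $t$ works.

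First, I would test the weak formulation with $\varphi=u\,\min\{u,L\}^{2(\beta-1)}$ for $\beta\ge1$ and truncation level $L$. This function belongs to $W$: it lies in $H^1$, and $|\varphi|\le L^{2(\beta-1)}u$, so the Orlicz integrability is inherited from $u$. Proposition~\ref{prop:nonlinear-calculus}(iv) lets the weak formulation apply to it. The usual computation with $w_L=u\min\{u,L\}^{\beta-1}$ then yields
\[
 \|\nabla w_L\|_2^2\le C\beta\int c(x)\,w_L^2 .
\]
Next, I would split $c=C+Cu^{q-2}$ and handle the unbounded part. Take a large level $K$ and write $u^{q-2}\le K^{q-2}+u^{q-2}\chi_{\{u>K\}}$. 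The tail norm $\|u^{q-2}\chi_{\{u>K\}}\|_{N/2}$ is small uniformly in $u$: by Chebyshev and the uniform $L^{2^*}$ bound (or an $L^{t}$ bound with $t>N/2$), it is at most $C(M_H)K^{-\gamma}$ for some $\gamma>0$. Its contribution $\int u^{q-2}\chi_{\{u>K\}}w_L^2\le \|\cdot\|_{N/2}\|w_L\|_{2^*}^2$ can therefore be absorbed into the left side via Sobolev once $K=K(\beta)$ is chosen. This gives $\|w_L\|_{2^*}^2\le C(\beta)\|w_L\|_2^2$-type bounds, and letting $L\to\infty$ improves integrability from $L^{2\beta}$ to $L^{2^*\beta}$.

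Since $K$ depends on $\beta$, the cleaner route is the following. Use Hölder with the fixed exponent $t>N/2$ to get
\[
 \int u^{q-2}w_L^2\le \|u^{q-2}\|_t\|w_L\|_{2t'}^2, \qquad 2t'<2^*,
\]
then interpolate $\|w_L\|_{2t'}$ between $L^2$ and $L^{2^*}$ and apply Young's inequality. This produces
\[
 \|u\|_{2^*\beta}\le (C\beta)^{\sigma/\beta}\,\|u\|_{2\beta}
\]
with constants depending only on $N,p,\bar\mu,M_H$. Iterating with $\beta_k=(2^*/2)^k$ and using $\prod(C\beta_k)^{\sigma/\beta_k}<\infty$ gives $\|u\|_\infty\le C\|u\|_{2^*}\le C(M_H)$.

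For $N=2$, replace $2^*$ by any large finite exponent and the same scheme applies.

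The main obstacle I expect is ensuring uniformity: all constants must depend only on $N,p,\bar\mu,M_H$. I will also have to check that the truncated test functions are admissible in $W$ and that $\|u^{q-2}\|_t$ is controlled purely by $M_H$. The latter is where the subcriticality $p<2^*-1$, together with the sublinear growth of the logarithm, is used.
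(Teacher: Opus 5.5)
Your proposal is correct and follows essentially the same route as the paper: the same truncated test functions $u\min\{u,L\}^{2\beta-2}$, shown admissible in $W$ via the $\Delta_2$ property of $A$, and the same H\"older--interpolation--Young absorption. Your exponent $q-1$ plays the role of the paper's $\rho\in(p,2^*-1)$ and your $t=2^*/(q-2)$ that of its $s_0=2^*/(\rho-1)$, with the same geometric iteration $\beta_{k+1}=\tfrac{2^*}{2}\beta_k$ (or $\tfrac{Q}{2}\beta_k$ when $N=2$); the level-$K$ splitting you sketch first is not needed.
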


\begin{proof}
Suppose first that $N\ge3$.  Choose
\begin{equation}\label{eq:Moser-rho}
 p<\rho<2^*-1.
\end{equation}
For $s\ge0$, the nonlinearity satisfies the upper bound
\begin{equation*}
 s\log s^2+\mu s^p\le C(s+s^\rho),
\end{equation*}
where $C=C(p,\rho,\bar\mu)$.  Let $\beta\ge1$,
$u_T=\min\{u,T\}$,
\[
 w_T=u\,u_T^{\beta-1},\qquad
 \varphi_T=u\,u_T^{2\beta-2}.
\]
For fixed $T>0$, the chain rule gives
$w_T,\varphi_T\in H^1(\R^N)$.  Since
$0\le\varphi_T\le T^{2\beta-2}u$, the monotonicity and
$\Delta_2$ scaling property of $A$ give
$A(\varphi_T)\le C_{T,\beta}A(u)$.  Thus
$\varphi_T\in H^1(\R^N)\cap L^A(\R^N)=W$
and is an admissible test function; all
integrands below are nonnegative or integrable.  Differentiating separately on $\{u<T\}$ and $\{u\ge T\}$ gives
\begin{equation*}
 \int_{\R^N}\nabla u\cdot\nabla\varphi_T
 \ge\frac{2\beta-1}{\beta^2}\int\abs{\nabla w_T}^2.
\end{equation*}
Testing the subsolution inequality and using
$\frac{2\beta-1}{\beta^2}\ge\frac 1 \beta$ yields, after increasing the constant,
\begin{equation}\label{eq:Moser-Sobolev-start}
 \norm{w_T}_{2^*}^2
 \le C\beta^2\left(\norm{w_T}_2^2+
           \int u^{\rho-1}w_T^2\right).
\end{equation}

Set
\begin{equation*}
 s_0=\frac{2^*}{\rho-1},\qquad
 t=\frac{2s_0}{s_0-1},\qquad
 \theta=\frac{(N-2)(\rho-1)}4.
\end{equation*}
Condition \eqref{eq:Moser-rho} gives $0<\theta<1$, and
$2<t<2^*$.  
Since
\[
\frac{\rho-1}{2^*}+\frac{2}{t}=1,
\]
Hölder's inequality and the uniform \(H^1\)-bound give
\[
\int_{\mathbb R^N}u^{\rho-1}w_T^2
\le C\|w_T\|_t^2.
\]
Moreover,
\[
\frac1t=\frac{1-\theta}{2}+\frac{\theta}{2^*},
\]
so interpolation yields
\[
\|w_T\|_t^2
\le
\|w_T\|_2^{2(1-\theta)}
\|w_T\|_{2^*}^{2\theta}.
\]

Young's inequality absorbs one half of the last
$L^{2^*}$ term in \eqref{eq:Moser-Sobolev-start} and gives
\begin{equation*}
 \norm{w_T}_{2^*}^2
 \le C_0\beta^\kappa\norm{w_T}_2^2,
 \qquad \kappa=\frac{2}{1-\theta}.
\end{equation*}
Monotone convergence as $T\to\infty$ yields
\begin{equation*}
 \norm{u}_{2^*\beta}
 \le(C_0\beta^\kappa)^{\frac 1{2\beta}}
       \norm{u}_{2\beta}.
\end{equation*}
 Starting with $\beta_0=1$ and
 $\beta_{k+1}=\frac {2^*}{2}\beta_k$, iteration gives
\[
 \norm{u}_{2\beta_{k+1}}
 \le \norm{u}_2\prod_{j=0}^k
 (C_0\beta_j^\kappa)^{\frac 1{2\beta_j}}.
 \]
The logarithm of the product is
bounded by
\[\frac12\sum_j\beta_j^{-1}(\log C_0+\kappa\log\beta_j)<\infty.\]
Letting $k\to\infty$ proves \eqref{eq:Moser-conclusion}.

For $N=2$, choose a fixed $Q>2$ and then $\rho>p$.  The Sobolev
inequality
\[
 \norm{z}_Q^2\le C_Q\bigl(\norm{\nabla z}_2^2+\norm z_2^2\bigr)
\]
replaces the Sobolev inequality with exponent $2^*$ in
\eqref{eq:Moser-Sobolev-start}.  Choose
$ s_0>\max\{\frac{Q}{Q-2},\,\frac{2}{\rho-1}\}$ and put
$t=\frac{2s_0}{s_0-1}\in(2,Q)$. Then $(\rho-1)s_0>2$.  The coefficient
$u^{\rho-1}$ is bounded in $L^{s_0}$ because
$H^1(\R^2)$ embeds into $L^m$ for every $2\le m<\infty$.  Interpolation between
$L^2$ and $L^Q$, followed by Young's inequality, gives
\[
 \norm{u}_{Q\beta}\le(C_0\beta^\kappa)^{\frac 1{2\beta}}
        \norm{u}_{2\beta}.
\]
Taking $\beta_{k+1}=\frac{Q}{2}\beta_k$ gives the same convergent product.
Fixing the auxiliary exponents \(Q,\rho,s_0\) once and for all in terms of
\(N\) and \(p\), all constants depend only on
\(N,p,\bar\mu,M_H\). This proves Lemma~\ref{lem:moser-iteration}.
\end{proof}

The next lemma combines the modulus of continuity of $s\log s^2$ at
zero with interpolation to obtain the H\"older estimate used in
Proposition~\ref{prop:uniform-tail}.

\begin{lemma}[H\"older estimates for the logarithmic nonlinearity near zero]
\label{lem:holder-tail}

Define
\[
h(s)=
\begin{cases}
s\log s^2, & s>0,\\
0, & s=0.
\end{cases}
\]
Then the following statements hold.

\begin{enumerate}[\rm{(}i\rm{)}]
    \item For every $M>0$ and every $\sigma\in(0,1)$, there exists
    $C=C(M,\sigma)>0$ such that
  \begin{equation}\label{eq:h-holder}
 \abs{h(t)-h(s)}\le C(M,\sigma)\abs{t-s}^\sigma
 \qquad(0\le s,t\le M).
\end{equation}

    \item Let $0<\alpha<\sigma<1$, and let
    $f\in C^{0,\sigma}(B)$, where $B\subset\mathbb{R}^N$
    has bounded diameter. Then
   \begin{equation}\label{eq:holder-interpolation}
 [f]_{C^{0,\alpha}(\overline B)}
 \le C_{\alpha,\sigma,B}
 \norm f_\infty^{1-\frac{\alpha}{\sigma}}
 [f]_{C^{0,\sigma}(\overline B)}^{\frac{\alpha}{\sigma}}.
\end{equation}

    \item Fix $R>0$, $\bar\mu>0$, and $0<\alpha<1$.
    Let $y_j\in\mathbb{R}^N$, $0\le\mu_j\le\bar\mu$, and
    \[
    u_j\ge0
    \qquad\text{on }B_R(y_j).
    \]
    Assume that $\{u_j\}$ is uniformly bounded and uniformly Lipschitz on
    $B_R(y_j)$, and that
    \[
    \|u_j\|_{L^\infty(B_R(y_j))}\longrightarrow0.
    \]
    Then
    \begin{equation}\label{eq:RHS-holder-tail}
 \norm{u_j\log u_j^2+\mu_j u_j^p}_{C^{0,\alpha}(B_R(y_j))}
 \xrightarrow[j\to\infty]{}0
\end{equation}
\end{enumerate}

\end{lemma}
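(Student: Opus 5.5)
We prove the three assertions of Lemma~\ref{lem:holder-tail} in order; the third follows from the first two.

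For (i), we first treat the case $0\le s<t\le M$ with $t-s\le t/2$. On $[s,t]$ we have $h'(\tau)=\log\tau^2+2$, so
\[
|h(t)-h(s)|\le (t-s)\sup_{[s,t]}|\log\tau^2+2|\le C_M(t-s)\bigl(1+|\log(t/2)|\bigr).
\]
Writing $t-s=(t-s)^\sigma(t-s)^{1-\sigma}\le (t-s)^\sigma t^{1-\sigma}$, and using that $t^{1-\sigma}(1+|\log(t/2)|)$ is bounded on $(0,M]$, gives \eqref{eq:h-holder}. In the complementary case $t-s>t/2$ (this includes $s=0$), we use $|h(\tau)|\le C(M,\sigma)\tau^\sigma$ on $[0,M]$, which follows from Lemma~\ref{lem:pointwise-log}. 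Since $s\le t<2(t-s)$, this yields
\[
|h(t)-h(s)|\le |h(t)|+|h(s)|\le 2C t^\sigma\le C'(t-s)^\sigma .
\]

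For (ii), we split according to the distance between the points. Fix $x\ne y$ in $\overline B$ and a threshold $\lambda>0$. If $|x-y|\ge\lambda$, the difference quotient is at most $2\|f\|_\infty\lambda^{-\alpha}$. If $|x-y|<\lambda$, it is at most $[f]_\sigma|x-y|^{\sigma-\alpha}\le[f]_\sigma\lambda^{\sigma-\alpha}$. We choose $\lambda=(\|f\|_\infty/[f]_\sigma)^{1/\sigma}$, which balances the two bounds and gives $C\|f\|_\infty^{1-\alpha/\sigma}[f]_\sigma^{\alpha/\sigma}$. If this $\lambda$ exceeds $\operatorname{diam}B$, only the short-range case occurs. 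Then $[f]_\sigma\lambda^{\sigma-\alpha}$ is still bounded by the same product, because $\lambda^{\sigma-\alpha}$ carries exactly the right powers. Hence the constant depends only on $\alpha$ and $\sigma$; the diameter enters only through the degenerate case $[f]_\sigma=0$, where the estimate is trivial.

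For (iii), set $f_j=h(u_j)+\mu_ju_j^p$ on $B_R(y_j)$. By uniform boundedness, $0\le u_j\le M$. Part (i) together with the uniform Lipschitz bound gives $[h(u_j)]_{\sigma}\le C\,\mathrm{Lip}(u_j)^\sigma\le C$ for any $\sigma\in(\alpha,1)$. The power term $\mu_ju_j^p$ is uniformly Lipschitz, hence uniformly $\sigma$-Hölder on the bounded ball. For the sup norm, $\|f_j\|_\infty\le \sup_{[0,\varepsilon_j]}|h|+\bar\mu\varepsilon_j^p\to0$ with $\varepsilon_j=\|u_j\|_{L^\infty(B_R(y_j))}$, by continuity of $h$ at $0$. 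Part (ii), with a constant independent of the center $y_j$ since all the balls are translates of $B_R$, then gives $[f_j]_\alpha\le C\|f_j\|_\infty^{1-\alpha/\sigma}\to0$. This proves \eqref{eq:RHS-holder-tail}.

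The only delicate point is the near-diagonal regime in (i), where $h'$ blows up at zero. The case split at $t-s=t/2$ handles it by trading the logarithmic loss in $h'$ for the factor $t^{1-\sigma}$.
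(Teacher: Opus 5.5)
Your proposal is correct and follows essentially the same route as the paper. In (i) you split according to whether $s$ is larger or smaller than $t-s$, using the mean-value theorem with the logarithmic bound on $h'$ near the diagonal and $|h(\tau)|\le C\tau^\sigma$ away from it; in (ii) you use the same threshold-balancing argument; and (iii) combines the two exactly as the paper does, with your extra observation that the constant in (ii) depends only on $\alpha$ and $\sigma$ being correct but not needed.
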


\begin{proof}
By symmetry, it is enough to consider \(t>s\). Set
\[
d=t-s>0.
\] 

If \(s\le d\), then \(s,t\le2d\). For \(0<d\le1\), the estimate
\[
|r\log r^2|\le C_\sigma r^\sigma
\qquad (0\le r\le 2)
\]
gives
\[
|h(t)-h(s)|
\le |h(t)|+|h(s)|
\le C_\sigma d^\sigma.
\]
If \(d\ge1\), the same conclusion follows from the boundedness of \(h\) on
\([0,M]\), after enlarging the constant \(C(M,\sigma)\).

If \(s>d\), then by the mean-value theorem,
\[
|h(t)-h(s)|
\le
d\sup_{r\in[s,t]}|\log r^2+2|.
\]
When \(s\le1\), every \(r\in[s,t]\) satisfies \(r\ge s>d\), and hence
\[
\sup_{r\in[s,t]}|\log r^2+2|
\le C(1+|\log d|).
\]
Therefore
\[
|h(t)-h(s)|
\le C d(1+|\log d|)
\le C_\sigma d^\sigma.
\]
For \(s\ge1\), \(h'\) is bounded on \([1,M]\), and the result follows
immediately. This proves \eqref{eq:h-holder}.

For \eqref{eq:holder-interpolation}, we use
\[
|f(x)-f(y)|
\le
\min\left\{
2\|f\|_\infty,\,
[f]_{C^{0,\sigma}(B)}|x-y|^\sigma
\right\},
\]
and distinguish the two regimes according to whether
\[
|x-y|
\lessgtr
\left(
\frac{2\|f\|_\infty}
{[f]_{C^{0,\sigma}(B)}}
\right)^{\frac{1}{\sigma}}.
\]
This yields the stated exponents; the case of a vanishing seminorm is immediate.

For (iii), fix $\sigma\in(\alpha,1)$.
By \eqref{eq:h-holder} and the uniform Lipschitz bound on \(u_j\),
the functions \(h(u_j)\) are uniformly bounded in \(C^{0,\sigma}\).
Since \(p>1\) and the \(u_j\) are uniformly bounded, the functions
\(u_j^p\) are uniformly Lipschitz as well. Hence
\[
f_j:=h(u_j)+\mu_j u_j^p
\]
is uniformly bounded in \(C^{0,\sigma}\), whereas the uniform convergence
\(u_j\to0\) implies
\[
\|f_j\|_\infty\to0.
\]
Applying \eqref{eq:holder-interpolation} therefore gives
\[
\|f_j\|_{C^{0,\alpha}}\to0,
\]
which is \eqref{eq:RHS-holder-tail}.
\end{proof}
\bibliographystyle{plain}
\bibliography{references}
\end{document}